\documentclass{amsart}

\usepackage[utf8]{inputenc}
\usepackage[usenames,dvipsnames]{xcolor}
\usepackage{hyperref}
\usepackage{amssymb}
\usepackage{float}
\usepackage{tikz}
\usepackage{forest}
\usepackage{nicefrac}
\usepackage{enumerate}
\usepackage[nameinlink, capitalize, noabbrev]{cleveref}
\usetikzlibrary{shapes.geometric,decorations.markings}
\usetikzlibrary{decorations.pathreplacing}
\usetikzlibrary{fit}
\usetikzlibrary{positioning}
\usetikzlibrary{intersections}
\tikzset{mytext/.style={font=\small, text=black}}

\hypersetup{
    colorlinks=true,
    linkcolor=teal,
    citecolor=magenta,
    }

\newtheorem{Theorem}{Theorem}

\newtheorem{Conjecture}[Theorem]{Conjecture}
\newtheorem{Corollary}[Theorem]{Corollary}
\newtheorem{proposition}{Proposition}[section]
\newtheorem{lemma}[proposition]{Lemma}
\newtheorem{corollary}[proposition]{Corollary}
\newtheorem{theorem}[proposition]{Theorem}

\newtheorem{Question}[Theorem]{Question}
\theoremstyle{definition}
\newtheorem{remark}[proposition]{Remark}
\newtheorem{definition}[proposition]{Definition}

\numberwithin{equation}{section}

\title[First-order theory, rigidity and Boston's conjecture]{Weakly branch actions: first-order theory, rigidity and Boston's conjecture}
\author{Jorge Fariña-Asategui}
\address{Jorge Fariña-Asategui: Centre for Mathematical Sciences, Lund University, 223 62 Lund, Sweden -- Department of Mathematics, University of the Basque Country UPV/EHU, 48080 Bilbao, Spain}
\email{jorge.farina\_asategui@math.lu.se}
\keywords{Boston's conjecture, weakly branch and branch actions, rigidity, Hausdorff dimension, structure graph, first-order theory}
\subjclass[2020]{Primary: 03C07, 20E08, 28A78; Secondary: 05C63, 20F65, 37F10}
\thanks{The author is supported by the Spanish Government, grant PID2020-117281GB-I00, partly with FEDER funds. The author also acknowledges support from the Walter Gyllenberg Foundation from the Royal Physiographic Society of Lund}

\begin{document}

\begin{abstract}

    We disprove a well-known conjecture of Boston (2000), which claims that a just-infinite pro-$p$ group is branch if and only if it admits a positive-dimensional embedding in the group of $p$-adic automorphisms. This is obtained as a result of a comprehensive study of the rigidity of branch actions.

    Firstly, we generalize the notion of the structure graph, introduced by Wilson in 2000, to weakly branch groups and use it to prove several results on the first-order theory of weakly branch groups, extending previous results of Wilson on branch groups.
    
    Secondly, we completely characterize the rigidity of weakly branch and branch actions on arbitrary spherically homogeneous rooted trees, extending previous partial results (for branch actions) by Hardy, Garrido, Grigorchuk and Wilson. Moreover, we prove that rigidity of a weakly branch group is equivalent to rigidity of its closure in the full automorphism group.

    Thirdly, we extend greatly the sufficient conditions $(*)$ and $(**)$ of Grigorchuk and Wilson, which leads to a complete and very easy-to-check characterization of the rigidity of the weakly branch actions of a fractal group of $p$-adic automorphisms. We further establish the first connection in the literature between the Hausdorff dimension of a weakly branch action and its rigidity.
    
    Lastly, we put everything together to show that the zero-dimensional just-infinite branch pro-$p$ groups introduced recently by the author admit rigid branch actions on a tree obtained by deletion of levels. This, together with previous results of the author, shows that these groups are indeed counterexamples to the aforementioned conjecture of Boston.
\end{abstract}

\maketitle

\section{introduction}
\label{section: introduction}

Let $T$ be a spherically homogeneous rooted tree and let $\mathrm{Aut}~T$ be the group of automorphisms of $T$ fixing the root of $T$. Let us fix a subgroup $G\le \mathrm{Aut}~T$. We say that~$G$ is \textit{level-transitive} if $G$ acts transitively on every level of $T$, where vertices at each level $n$ of $T$ are precisely those at distance $n$ from the root. Given a vertex $v\in T$, we define the \textit{rigid vertex stabilizer} $\mathrm{rist}_G(v)$, as the subgroup of $G$ consisting of those automorphisms in $G$ fixing every vertex outside the subtree~$T_v$ rooted at~$v$. The so-called \textit{rigid level stabilizer} $\mathrm{Rist}_G(n)$ is no more than the direct product of the rigid vertex stabilizers corresponding to all the vertices at the $n$th level of~$T$. We say that $G$ is branch (or weakly branch) if each $\mathrm{Rist}_G(n)$ is of finite index in~$G$ (respectively infinite). 

Recall that as a profinite group $\mathrm{Aut}~T$ admits a metric space structure induced by the filtration $\{\mathrm{St}(n)\}_{n\ge 0}$, where we write $\mathrm{St}(n)$ for the $n$th \textit{level stabilizer}, i.e. the subgroup of $\mathrm{Aut}~T$ stabilizing pointwisely every vertex at the $n$th level of $T$. We may define a Hausdorff dimension $\mathrm{hdim}_T(\cdot)$ for the Borel subsets of $\mathrm{Aut}~T$. If $G\le \mathrm{Aut}~T$ is a closed subgroup, then its Hausdorff dimension coincides with its lower-box dimension by \cite[Theorem 2.4]{BarneaShalev}, i.e. it is given by
\begin{align*}
    \mathrm{hdim}_T(G)=\liminf_{n\to\infty}\frac{\log|G:\mathrm{St}_G(n)|}{\log|\mathrm{Aut}~T:\mathrm{St}(n)|},
\end{align*}
where we write $\mathrm{St}_G(n):=G\cap \mathrm{St}(n)$. For $T_p$ the $p$-adic tree, i.e. the regular rooted tree of prime degree $p$, we write $W_p$ for the Sylow pro-$p$ subgroup of $\mathrm{Aut}~T_p$ and call it the group of \textit{$p$-adic automorphisms}; see \cref{section: p-adic} for an alternative definition of $W_p$. For a closed subgroup $G\le W_p$, we usually consider its Hausdorff dimension in $W_p$ instead, for which we write $\mathrm{hdim}_{W_p}(G)$.

Branch groups were introduced by Grigorchuk in 1997 as a class of groups generalizing several earlier constructions of groups acting on rooted trees. The first example of a branch group, the so-called \textit{first Grigorchuk group}, was introduced by Grigorchuk in \cite{GrigorchukBurnside} as a group with remarkable properties: it is a Burnside group (i.e. a finitely generated infinite torsion group), it was the first example of a group with intermediate growth \cite{GrigorchukMilnor}, answering an open problem of Milnor \cite{Milnor}, and it was also the first example of an amenable but not elementary amenable group, answering another open problem of Day \cite{Day}. The class of weakly branch groups also includes groups with notable properties; such as the Basilica group, defined by Grigorchuk and $\dot{\mathrm{Z}}$uk in \cite{Basilica}, which was the first example of an amenable group of exponential growth \cite{BasilicaAmenable}. Remarkable applications of these classes of groups outside group theory include, the celebrated solution by Bartholdi and Nekrashevych to Hubbard's twisted rabbit problem in complex dynamics \cite{BartholdiNekra} and applications to different density problems in arithmetic dynamics; cf. \cite{Bridy, JorgeSantiFPP,JonesComp,Juul,Odoni, Radi}.

Branch groups are conjectured to play an important role in arithmetic geometry \cite{NewHorizonsBoston}. By a well-known conjecture of Fontaine and Mazur \cite{FontaineMazur}, the Galois groups associated to maximal unramified pro-$p$ extensions of a number field are expected to not admit infinite $p$-adic analytic quotients, i.e. their linear representations over the $p$-adic integers should have finite image; see \cite{Calegari,Kisin,Pan,SkinnerWiles} for some progress regarding this central conjecture in arithmetic geometry. Moreover, Boston further conjectured in \cite{Boston}, that the same holds for linear representations over any pro-$p$ domain. 

The Fontaine-Mazur conjecture is strongly connected to the Hausdorff dimension of groups acting on the $p$-adic tree. It was conjectured by Abért and Virág in \cite[Conjecture 8.4]{AbertVirag} that embeddings of linear groups over a pro-$p$ domain into~$W_p$ are zero-dimensional, a conjecture which has been recently confirmed by the author in \cite[Theorem~A]{ArborealJorge}. Furthermore, the author showed in \cite[Corollary 3]{ArborealJorge}, that positive-dimensional just-infinite pro-$p$ subgroups of $W_p$ satisfy Boston's generalization of the Fontaine-Mazur conjecture; where a profinite group is \textit{just-infinite} if all its non-trivial closed normal subgroups are open. Similarly, a discrete group is said to be \textit{just-infinite} if all its proper quotients are finite.

As proved by Grigorchuk in \cite{NewHorizonsGrigorchuk} (based on an earlier classsification of Wilson in \cite{Wilson}), just-infinite branch groups constitute one of the three (two in the profinite setting) classes of groups partitioning the class of just-infinite groups. Just-infinite pro-$p$ groups appear naturally in arithmetic geometry as they constitute the critical cases of the Fontaine-Mazur conjecture; see Boston's discussion in \cite[Section 3]{NewHorizonsBoston}.

The Fontaine-Mazur conjecture suggests that classical $p$-adic Galois representations are not the appropriate object to study the maximal unramified pro-$p$ extensions of a number field. Boston suggested to consider representations on the $p$-adic tree $T_p$ instead. In particular, motivated by the classification of just-infinite pro-$p$ groups, he conjectured that the just-infinite pro-$p$ quotients of these Galois groups can be indeed realized as branch subgroups of $W_p$. This fact, together with the evidence that most branch groups have positive Hausdorff dimension in $W_p$, led Boston to pose the following purely (geometric) group theoretic conjecture in \cite[Problem 9]{NewHorizonsBoston}:

\begin{Conjecture}[Boston's conjecture, 2000]
\label{Conjecture: Boston conjecture}
    A just-infinite pro-$p$ group is branch if and only if it admits an embedding into $W_p$ with positive Hausdorff dimension.
\end{Conjecture}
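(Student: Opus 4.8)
The plan is to disprove \Cref{Conjecture: Boston conjecture} by refuting its forward (``only if'') implication: I will exhibit a just-infinite pro-$p$ group that is branch yet admits \emph{no} positive-dimensional embedding into $W_p$. The candidates are the groups $G$ introduced in my earlier work, already known to be just-infinite, pro-$p$, branch, and zero-dimensional in their natural embedding into $W_p$. The obstacle that remains—and that the preceding sections are built to remove—is that \Cref{Conjecture: Boston conjecture} demands only a \emph{single} positive-dimensional embedding, so one must simultaneously control \emph{every} embedding $G\hookrightarrow W_p$ and forbid all of them from being positive-dimensional.

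The first step is to realize $G$ through an explicit \emph{rigid} branch action. Since these groups do not act as branch groups directly on $T_p$ but rather on a tree $T'$ obtained from $T_p$ by deletion of levels, I would pass to $T'$ and verify rigidity there using the easy-to-check criterion obtained above, i.e., the extension of the sufficient conditions $(*)$ and $(**)$ of Grigorchuk and Wilson to arbitrary fractal subgroups of $W_p$. This reduces rigidity to a finite and very easy-to-verify combinatorial condition on $G$, which for the concrete groups at hand can be checked directly.

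The second step is to turn this single rigid action into control over all embeddings. By the complete characterization of rigidity of weakly branch and branch actions on arbitrary spherically homogeneous rooted trees, a rigid group possesses, up to tree isomorphism and deletion of levels, a unique branch structure; and by the theorem that rigidity of a weakly branch group is equivalent to rigidity of its closure in $\mathrm{Aut}~T$, this descends to the closure $\overline{\iota(G)}\le W_p$ of an arbitrary embedding $\iota$. Because $G$ is genuinely branch and not hereditarily just-infinite, its embeddings into $W_p$ fall within the scope of this characterization, which then forces $\iota$ to reproduce the canonical branch structure up to deletion of levels. Finally, the connection established above between the Hausdorff dimension of a weakly branch action and its rigidity lets me read the value of $\mathrm{hdim}_{W_p}\big(\overline{\iota(G)}\big)$ off this rigid structure; since for these $G$ it equals $0$ for every $\iota$, no embedding is positive-dimensional, and \Cref{Conjecture: Boston conjecture} fails.

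The step I expect to be the main obstacle is precisely this last transfer of information across the deletion of levels. The branch—and possibly even positive-dimensional—behaviour of $G$ is witnessed on the coarser tree $T'$, whereas Boston's dimension is measured inside $W_p$ on the finer tree $T_p$, and Hausdorff dimension is highly sensitive to the ambient tree through the denominator $\log|\mathrm{Aut}~T:\mathrm{St}(n)|$. One must therefore rule out that some embedding could concentrate the branching of $G$ densely enough along $T_p$ to acquire positive dimension there. Doing so amounts to converting the qualitative uniqueness of the branch structure supplied by rigidity into quantitative control of the indices $|\overline{\iota(G)}:\mathrm{St}_{\overline{\iota(G)}}(n)|$ along the \emph{entire} filtration $\{\mathrm{St}(n)\}_{n\ge0}$ of $T_p$, not merely along the sub-filtration indexed by the surviving levels—and this is exactly where the dimension–rigidity connection must carry the argument.
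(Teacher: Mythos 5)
Your overall architecture coincides with the paper's: pass to a tree $T_n$ obtained from $T_p$ by deletion of levels, prove rigidity there, use it to pin down every branch action of $G_n$ on $T_p$, and conclude zero dimension. But your first step contains a genuine gap. You propose to verify rigidity on the deleted-levels tree ``using the easy-to-check criterion,'' i.e.\ the extension of $(*)$ and $(**)$ to fractal subgroups of $W_p$. That criterion is unavailable here: \cref{Corollary: sufficient for p-adic} is a statement about fractal groups acting on the regular tree $T_p$, and its precursor \cref{theorem: sufficient condition for rigidity in p-adic} requires property $(*)$, hence a locally prime tree. The trees $T_n$ are not locally prime --- a vertex at level $k$ of $T_n$ has $p^{l_{n+k}}$ immediate descendants --- so $(*)$ cannot hold, and $G_n$ is not fractal on $T_n$ in any useful sense (sections at vertices of $T_n$ yield the \emph{different} groups $G_{n+k}$, not $G_n$). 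The paper states this explicitly: none of the prior sufficient conditions of Hardy, Garrido, Grigorchuk and Wilson apply to the growing trees $T_n$, and $T_n$-rigidity has to be established through the full characterization $\mathcal{F}_{T_n}=\mathcal{S}_{\tau_n}$ of \cref{proposition: characterization of rigidity}, implemented via the bespoke index computation of \cref{lemma: indices of action of H} (the equation $\log_p|G_n:H|=\ell+k\cdot p^{\,l_n-\ell}$ has no solutions in the relevant range) and the level-climbing argument of \cref{lemma: climb levels}, culminating in \cref{theorem: my groups are T-rigid}. This is the technical heart of the disproof, and your plan replaces it with an inapplicable tool.

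Conversely, the step you single out as the expected main obstacle is not one. Once $T_n$-rigidity forces an arbitrary branch action $\chi_n:G_n\to\mathrm{Aut}~T_p$ (with image in $W_p$) to be conjugate to $\tau_n$ in $\mathrm{Aut}~T_n$, the indices $|\chi_n(G_n):\mathrm{St}_{\chi_n}(t_k^n)|$ agree with $|G_n:\mathrm{St}_{\rho_n}(t_k^n)|$; since the Hausdorff dimension of a closed subgroup is a \emph{lower} limit, the liminf over all levels of $T_p$ is bounded above by the liminf along the subsequence $\{t_k^n\}_{k\ge 0}$, which is $0$ by \cite[Proposition 6.12]{RestrictedSpectra}, and non-negativity finishes the argument. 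No control of the indices at the intermediate levels is needed --- your worry has the inequality between a liminf and its subsequential liminf pointed the wrong way. Relatedly, \cref{Corollary: hdim and rigidity} cannot ``let you read the dimension off the rigid structure'': it deduces rigidity from large dimension, not dimension from rigidity, and it plays no role in the paper's final proof; nor is \cref{Theorem: T-rigidity in finite quotients} needed there, since $\chi_n$ is an action of $G_n$ itself. One point you do gesture at correctly is the preliminary reduction: embeddings with non-branch image are zero-dimensional by prior results, so only branch actions on $T_p$ need to be controlled.
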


Boston's conjecture has been open for 25 years. What is more, the class of branch groups first appeared in print in \cite{NewHorizonsGrigorchuk}, i.e. in the same book where Boston posed \cref{Conjecture: Boston conjecture}. Boston's conjecture is thus one of the first fundamental conjectures regarding the structure of branch groups, appearing right at the birth of this class of groups. Boston's conjecture was already considered by Abért and Virág in their seminal paper \cite{AbertVirag}, but, to the best of our knowledge, there has not been any further progress regarding its veracity until the recent breakthrough in \cite{RestrictedSpectra}, where the very first examples of just-infinite branch pro-$p$ groups with zero Hausdorff dimension in~$W_p$ were constructed by the author. These examples provide strong evidence against the veracity of Boston's conjecture. 

However, the examples in \cite{RestrictedSpectra} are far from being counterexamples to Boston's conjecture. In fact, one first needs to address the following critical question: are the other embeddings of these zero-dimensional just-infinite branch groups into $W_p$ also zero-dimensional? Since non-branch just-infinite pro-$p$ subgroups of $W_p$ are known to be zero-dimensional, we need to understand the embeddings of these groups into~$W_p$ as branch groups. The goal of the present paper is to develop the tools needed to answer this critical question and ultimately disprove Boston's conjecture.

Let $G$ be an abstract group. An embedding $\rho:G\to\mathrm{Aut}~T$ such that $\rho(G)$ is a (weakly) branch group is called a \textit{(weakly) branch action} of $G$. In principle, the same group $G$ can have completely different (weakly) branch actions on different spherically homogeneous rooted trees. However, by the work of Rubin in \cite{Rubin}, the action of $G$ (as a weakly branch group) on the boundary $\partial T$ is unique, up to conjugation in $\mathrm{Homeo}~\partial T$; see also \cite{NekraLav}. A natural question is whether a weakly branch action of $G$ on $T$ is also unique up to conjugation in $\mathrm{Aut}~T$. If the answer is positive, we say that $G$ is \textit{$T$-rigid}.

The work done in \cite{RestrictedSpectra} has brought to the forefront the importance of understanding branch actions and their rigidity. Rigidity of branch actions seems to be more subtle than its counterpart on the boundary of the tree, and it is less understood. We shall close this gap and give a thorough study of branch actions and their rigidity. What is more, as weakly branch actions are precisely those actions on the tree that induce Rubin actions on the boundary of the tree (i.e. rigid actions), we shall work in the more general framework of weakly branch actions. The first step in this paper is hence to generalize the tools available to study branch actions to the context of weakly branch actions.

The main tools to study the branch actions of a group date back to 1971, when Wilson introduced the \textit{structure lattice} of a just-infinite group in \cite{Wilson}, i.e. the Boolean algebra consisting of equivalence classes of subnormal subgroups up to an equivalence relation in terms of their centralizers \cite{Wilson}. Wilson introduced the structure lattice in order to classify just-infinite groups; see also \cite{NewHorizonsGrigorchuk}. A detailed description of the structure lattice of a (non-necessarily just-infinite) branch group has been obtained recently by the author and Grigorchuk in \cite{JorgeSlava}. 

The subgraph of the structure lattice of a branch group $G$ restricted to basal subgroups (where a \textit{basal} subgroup is a subgroup whose normal closure is a direct product of finitely many of its distinct conjugates) is called the \textit{structure graph} of~$G$. The structure graph of a branch group~$G$ is the main tool to study its branch actions, as it encodes every possible branch action of~$G$ on any spherically homogeneous rooted tree; see \cite{AlejandraCSP}.

The goal of \cref{section: the basal graph of a weakly branch group} is to extend the definition of the structure graph, to encode weakly branch actions. The difficulties to do so were already highlighted by Garrido in her doctoral thesis \cite[pages 75-76]{GarridoPhD}. Namely, there is no nice class of groups $\mathcal{C}$ such that all the proper quotients of a weakly branch group are virtually $\mathcal{C}$. This contrasts with branch groups, whose proper 
quotients are all virtually abelian, a key fact in the definition of the structure lattice.  Therefore, we opt for a different approach here. Instead of generalizing the structure lattice to weakly branch groups and only then obtain the structure graph by restricting to basal subgroups, we generalize directly the structure graph. In fact, for a weakly branch group $G$, we set two basal subgroups to be equivalent if and only they have non-trivial intersection and equal normalizers. The resulting graph $\mathcal{B}_G$ of these equivalence classes will be called the \textit{structure graph} of $G$, as it
does indeed generalize the structure graph of a branch group; see \cref{section: the basal graph of a weakly branch group}.

This allows us to extend results of Wilson on the first-order theory of branch groups  in \cite{Wilson2} to the context of weakly branch groups in \cref{section: first order theory}. Recall that the \textit{congruence topology} in $G$ is the metric topology induced by the level-stabilizers.  We summarize the results obtained in \cref{section: first order theory} in the following theorem:

\begin{Theorem}
\label{Theorem: definability}
    Let $G\le \mathrm{Aut}~T$ be a weakly branch group. Then, the following are all first-order definable in $G$:
    \begin{enumerate}[\normalfont(i)]
        \item the congruence topology in $G$;
        \item the structure graph $\mathcal{B}_G$.
    \end{enumerate}
    Furthermore, if $G$ is $T$-rigid then
    \begin{enumerate}[\normalfont(i)]
        \item[\normalfont(iii)] each level-stabilizer $\mathrm{St}_G(n)$ is also first-order definable in $G$.
    \end{enumerate}
\end{Theorem}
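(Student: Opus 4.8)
The plan is to prove the three definability claims in order, each building on the previous, by exhibiting explicit first-order formulas in the language of groups.

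\medskip

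\noindent\textbf{Definability of the congruence topology.} First I would make precise what "the congruence topology is first-order definable" means: a subset of $G^k$ should be definable by a first-order formula that captures, for each $n$, the relation of two elements being congruent modulo $\mathrm{St}_G(n)$, or equivalently the family of neighborhoods of the identity given by the $\mathrm{St}_G(n)$. The key idea is that level-stabilizers and their cosets can be recovered from the structure graph. Since $G$ is weakly branch, each $\mathrm{Rist}_G(v)$ is a non-trivial basal subgroup, and the rigid level stabilizer $\mathrm{Rist}_G(n)=\prod_{|v|=n}\mathrm{rist}_G(v)$ is a direct product of the conjugates of a single basal subgroup (by level-transitivity). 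The level-stabilizer $\mathrm{St}_G(n)$ normalizes every $\mathrm{rist}_G(v)$ with $|v|=n$, and in fact $\mathrm{St}_G(n)$ is characterized combinatorially by how it permutes (fixes) the vertices at level $n$, i.e.\ by its action on the corresponding equivalence classes in $\mathcal{B}_G$. So the plan is to express $\mathrm{St}_G(n)$ as the set of elements of $G$ normalizing each of the finitely many level-$n$ classes of the structure graph, and to show this is first-order (using that the number of level-$n$ vertices is finite and the structure graph is definable by part (ii)).

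\medskip

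\noindent\textbf{Definability of the structure graph.} This is the technical heart and the step I expect to be the main obstacle. I would follow Wilson's strategy from \cite{Wilson2}, adapted to weakly branch groups via the new notion of structure graph $\mathcal{B}_G$ introduced in \cref{section: the basal graph of a weakly branch group}. The vertices of $\mathcal{B}_G$ are equivalence classes of basal subgroups, where two basal subgroups are equivalent iff they have non-trivial intersection and equal normalizers; the edges encode the containment/adjacency structure. The goal is to show there is a first-order formula $\varphi(x_1,\dots,x_k)$ that picks out tuples generating (a representative of) a basal subgroup, and a formula expressing the equivalence relation and the adjacency relation on the graph, entirely in the language of groups. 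The difficulty is that "basal" is defined in terms of the normal closure being a direct product of finitely many conjugates, which is an infinitary/second-order-looking condition; the key is to bound the number of conjugates and express the direct-product decomposition using commutation relations ($[x,y]=1$ for elements in distinct factors together with trivial pairwise intersection), exploiting that in a weakly branch group distinct rigid vertex stabilizers commute. I would show that basality, centralizer conditions, and equality of normalizers are all expressible by first-order formulas, so that $\mathcal{B}_G$ together with its graph structure is interpretable in $G$.

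\medskip

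\noindent\textbf{Definability of each level-stabilizer under rigidity.} For the final claim I would use that $T$-rigidity pins down the action on the tree up to conjugation in $\mathrm{Aut}~T$, so the combinatorial data of the structure graph determines the tree $T$ and its levels uniquely. Concretely, rigidity implies that the partition of $\mathcal{B}_G$ into levels is canonical: the level-$n$ classes are recovered from $\mathcal{B}_G$ as a first-order definable family (each individual level being a finite, hence first-order specifiable, set of classes). Once the level-$n$ classes are identified inside the definable graph $\mathcal{B}_G$, the stabilizer $\mathrm{St}_G(n)$ is exactly the set of $g\in G$ fixing each of these classes setwise, which is first-order by combining the definability of $\mathcal{B}_G$ from part (ii) with rigidity. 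The point of rigidity here is precisely to remove the ambiguity in how a given abstract group sits inside $\mathrm{Aut}~T$: without it, different weakly branch actions could assign different level structures, and no single formula could define "the" $n$th level stabilizer. I would close by noting that (iii) then also gives an independent route to (i), consistent with the overall picture.
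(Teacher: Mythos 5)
Your proposal has two genuine gaps. The first is in what you correctly identify as the technical heart, part (ii): you propose a formula $\varphi(x_1,\dots,x_k)$ picking out tuples that \emph{generate} a basal subgroup, with basality expressed through commutation relations and trivial intersections. This cannot work as stated: basal subgroups of a weakly branch group need not be finitely generated (rigid vertex stabilizers typically are not), and even for a finitely generated subgroup, membership in $\langle x_1,\dots,x_k\rangle$ requires words of unbounded length and is therefore not first-order; quantifying over subgroups is second-order. The missing idea is Wilson's double-centralizer device, which the paper adapts: from a single parameter $h$ one builds the definable sets $X_h=\{[h^{-1},h^k]\mid k\in G\}$, $Y_h$, $W_h$, and the uniformly definable subgroup $\mathrm{C}_G^2(W_h)=\{g\mid \gamma_h(g)\}$, together with Wilson's sentence $\beta(h)$ detecting exactly when $\mathrm{C}_G^2(W_h)$ is basal. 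The whole proof then rests on \cref{proposition: properties of Wh} (whose adaptation to weakly branch groups uses Abért's separating-actions theorem in place of Wilson's Lemma 3.1) and on \cref{lemma: definability of N(B)}, which shows that \emph{every} class of $\mathcal{B}_G$ has a representative of the definable form $\mathrm{C}_G^2(W_h)$ with the correct normalizer; the order and equivalence on $\mathcal{B}_G$ are then expressed by the formula $\delta(g,h)$ via \cref{lemma: first order logic of normalizers and normal cores}. Without a uniformly definable family of basal representatives, your interpretation of $\mathcal{B}_G$ in $G$ has no starting point.

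The second gap is that your dependency structure is inverted and makes (i) circular in a way that clashes with the role of rigidity. You derive (i) from (ii) by letting $\mathrm{St}_G(n)$ be the set of elements fixing ``the level-$n$ classes'' of $\mathcal{B}_G$; but absent rigidity there is no canonical level structure on $\mathcal{B}_G$ --- different weakly branch actions embed $T$ differently (\cref{lemma: actions as embeddings}), and $\mathcal{B}_G$ may contain many rooted copies of $T$. Indeed, definability of the individual subgroups $\mathrm{St}_G(n)$ is precisely the claim (iii) that \emph{requires} $T$-rigidity; it cannot be an ingredient of (i). The paper instead proves (i) directly: by \cref{lemma: every basal contains rist} and \cref{lemma: equivalent topologies} the congruence topology is generated by the family $\mathcal{N}=\{\mathrm{Core}_G(\mathrm{N}_G(B))\mid B\in B(G)\}$, which is definable as $\{\mathrm{Core}_G(\mathrm{N}_G(\mathrm{C}_G^2(W_h)))\mid \beta(h)\}$ via the formulas $\varphi_h$ and $\sigma_h$; part (iii) then follows because, under $T$-rigidity, \cref{proposition: characterization of rigidity} (via $\mathcal{F}_T=\mathcal{S}_\rho$) forces $\mathcal{N}$ to be exactly the family of level stabilizers. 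Note also that in (iii) your appeal to ``each individual level being a finite, hence first-order specifiable, set of classes'' is not sound as it stands: finiteness of a set of classes does not yield a parameter-free formula; the uniform definability comes from the sentences $\sigma_h$, not from finiteness.
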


We note that a group-theoretic definition of the congruence topology in a branch group and a weakly branch group was given by Garrido in \cite{AlejandraCSP} and in her doctoral thesis \cite[Theorem 6.5]{GarridoPhD} respectively. The first-order definability in (i) and (iii) of \cref{Theorem: definability} are new even for branch groups. In fact, (iii) extends a result of Nekrashevych and Lavreniuk stating that $\mathrm{St}_G(n)$ is characteristic if $G$ is a level-transitive iterated wreath product \cite[Theorem 8.2]{NekraLav}.

In \cref{section: rigidity}, we begin a systematic study of weakly branch actions. Our main result in this section is a complete characterization of the rigidity of a weakly branch action on an arbitrary spherically homogeneous rooted tree $T$. This extends previous partial results of Grigorchuk and Wilson (for branch groups) in \cite{GrigorchukWilson}, of Garrido and Hardy (for branch groups) in their respective doctoral thesis \cite{GarridoPhD, Hardy}, and of Wilson (also for branch groups) in \cite{WilsonBook}. It also unifies the approach of Wilson to study branch actions via the structure graph and the alternative approach of Nekrasevych and Lavreniuk in \cite{NekraLav} to study automorphisms of weakly branch groups, as $T$-rigidity of a weakly branch group $G\le \mathrm{Aut}~T$ implies that its automorphism group coincides with its normalizer in $\mathrm{Aut}~T$. The connection between the ideas in \cite{NekraLav} and the approach of Wilson was explicitly asked by Garrido in \cite[Remark 3 in Section 7.1]{GarridoPhD}.

We characterize $T$-rigidity in terms of $T$-filtrations, a new notion that we introduce in \cref{section: rigidity}; see \cref{definition: T filtration}. Let us a fix a group $G$ and a weakly branch action $\rho:G\to \mathrm{Aut}~T$. For each $v\in T$, we write $\mathrm{st}_\rho(v)$ for the subgroup of $G$ corresponding, via $\rho$, to the stabilizer of $v$ in~$\rho(G)$. We also write
$$\mathcal{S}_\rho:=\{\mathrm{st}_\rho(v)\}_{v\in T}.$$
Intuitively, a $T$-filtration in the group $G$ is a filtration of finite-index subgroups of~$G$, which resembles the filtration $\{\mathrm{st}_\rho(v)\}_{v\in \gamma}$ for an infinite rooted path $\gamma\in \partial T$. We write~$\mathcal{F}_{T}$ for the collection of all subgroups of~$G$ which belong to some $T$-filtration in $G$.

\begin{Theorem}
\label{proposition: characterization of rigidity}
    Let $G$ be a group admitting a (weakly) branch action $\tau:G\to\mathrm{Aut}~T$. The following are all equivalent:
    \begin{enumerate}[\normalfont(i)]
        \item the group $G$ is $T$-rigid;
        \item for every (weakly) branch action $\rho:G\to \mathrm{Aut}~T$, we have $\mathcal{F}_T=\mathcal{S}_{\rho}$;
        \item there exists a (weakly) branch action $\rho:G\to \mathrm{Aut}~T$, such that $\mathcal{F}_T=\mathcal{S}_{\rho}$.
    \end{enumerate}
\end{Theorem}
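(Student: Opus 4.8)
The plan is to prove the cycle (i) $\Rightarrow$ (ii) $\Rightarrow$ (iii) $\Rightarrow$ (i), since (ii) $\Rightarrow$ (iii) is immediate (any concrete action $\rho$ witnessing the universal statement serves as the existential witness, using that at least one weakly branch action $\tau$ exists by hypothesis). The conceptual heart of the matter is that the collection $\mathcal{F}_T$ is defined \emph{intrinsically}, purely in terms of the abstract group $G$ and the combinatorial type of the tree $T$, whereas $\mathcal{S}_\rho$ depends a priori on the chosen action $\rho$. Thus the theorem says: rigidity is exactly the statement that this action-dependent family collapses to the intrinsic one. My strategy is to show that $\mathcal{S}_\rho \subseteq \mathcal{F}_T$ always holds, so that the real content on each side is the reverse inclusion $\mathcal{F}_T \subseteq \mathcal{S}_\rho$.

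First I would establish the direction (i) $\Rightarrow$ (ii). Assume $G$ is $T$-rigid, and fix an arbitrary weakly branch action $\rho$. The key preliminary observation is that for \emph{any} weakly branch action $\rho$, the vertex stabilizers $\{\mathrm{st}_\rho(v)\}_{v\in\gamma}$ along any infinite ray $\gamma$ form a $T$-filtration; this should follow directly from \cref{definition: T filtration}, since the definition is designed to abstract exactly the properties (finite index, the branching/indexing data matching the degrees of $T$, and the nesting structure) enjoyed by a ray of vertex stabilizers in a genuine action. Hence $\mathcal{S}_\rho \subseteq \mathcal{F}_T$ unconditionally. For the reverse inclusion $\mathcal{F}_T \subseteq \mathcal{S}_\rho$, I would take an arbitrary $T$-filtration $\mathcal{U} = \{U_n\}$ and need to realize its members as vertex stabilizers of $\rho$. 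The idea is to build from the abstract $T$-filtration $\mathcal{U}$ a \emph{second} weakly branch action $\rho'$ of $G$ on $T$ in which the $U_n$ literally are the vertex stabilizers along a ray: one reconstructs the action on the coset tree determined by the filtration, using the finite-index and compatibility data to recover the local permutations at each vertex. Then $T$-rigidity supplies an automorphism $g\in\mathrm{Aut}~T$ conjugating $\rho'$ to $\rho$, and conjugation carries vertex stabilizers to vertex stabilizers, so the members of $\mathcal{U}$ land inside $\mathcal{S}_\rho$. This yields $\mathcal{F}_T \subseteq \mathcal{S}_\rho$, and combined with the easy inclusion gives equality.

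For (iii) $\Rightarrow$ (i), I would argue contrapositively or directly by unwinding the definition of $T$-rigidity. Suppose some action $\rho$ satisfies $\mathcal{F}_T = \mathcal{S}_\rho$; I must show every weakly branch action is conjugate to $\rho$. Given another action $\sigma:G\to\mathrm{Aut}~T$, its vertex stabilizers also lie in $\mathcal{F}_T = \mathcal{S}_\rho$ by the unconditional inclusion applied to $\sigma$. The task is then to assemble a single tree automorphism conjugating $\sigma$ to $\rho$ from the fact that both actions have the \emph{same} pool of vertex stabilizers. Here one builds a root-preserving tree isomorphism level by level: using that $\mathcal{S}_\sigma, \mathcal{S}_\rho \subseteq \mathcal{F}_T$ with the equality pinning $\mathcal{F}_T$ to $\mathcal{S}_\rho$, one matches up the vertices of $T$ under $\sigma$ with those under $\rho$ according to which abstract subgroup of $G$ is the corresponding stabilizer, checking that the incidence (parent-child) relations and the $G$-action are respected. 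The hypothesis $\mathcal{F}_T=\mathcal{S}_\rho$ guarantees there are no "extra" stabilizers to obstruct the bijection, so the map is a genuine conjugating automorphism.

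The main obstacle I anticipate is the reconstruction step common to both implications: passing from the purely combinatorial data of a $T$-filtration (or of an abstract equality of stabilizer families) back to an honest action on $T$ and an honest conjugating element of $\mathrm{Aut}~T$. Two subtleties need care. First, a single ray of vertex stabilizers does not by itself determine the action on the whole tree, so one must verify that level-transitivity together with the weakly branch property lets the ray data propagate to all vertices consistently, i.e.\ that the conjugates of the filtration members correctly enumerate the stabilizers of the remaining vertices at each level. Second, in matching stabilizer families one must ensure the correspondence of subgroups to vertices is well-defined: distinct vertices could conceivably share a stabilizer, so the argument should use the finer structure (for instance, the structure graph $\mathcal{B}_G$ of \cref{section: the basal graph of a weakly branch group} and the associated rigid stabilizers) to separate vertices and guarantee that the bijection respects the tree structure rather than merely the set of subgroups. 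I expect the weakly branch hypothesis — which forces the rigid vertex stabilizers to be nontrivial and hence to detect each vertex — to be exactly what makes this separation possible.
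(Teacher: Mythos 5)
Your proposal is correct and takes essentially the same route as the paper: the unconditional inclusion $\mathcal{S}_\rho\subseteq\mathcal{F}_T$, the realization of an arbitrary $T$-filtration as the vertex stabilizers of a new (weakly) branch action — the paper's \cref{lemma: T filtration define branch actions}, which is exactly your coset-tree reconstruction, carried out via a $G$-equivariant embedding into the structure graph $\mathcal{B}_G$ so that the basal subgroups $\mathrm{rist}_\rho(u_k)^{H_k}$ certify weak branchness — and, for (iii)$\Rightarrow$(i), matching vertices through rigid-stabilizer classes in $\mathcal{B}_G$ (the paper's \cref{lemma: description of basal graph with rists}), which resolves precisely the stabilizer-separation issue you flag. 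The only cosmetic difference is that the paper proves (i)$\Rightarrow$(ii) by contradiction from a single subgroup in $\mathcal{F}_T\setminus\mathcal{S}_\rho$, deriving that the conjugating automorphism would have to map a moved vertex to a fixed one, rather than realizing every filtration directly as you do.
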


\cref{proposition: characterization of rigidity} is the first result on the rigidity of a (weakly) branch group where the structure graph is not required to be isomorphic to the tree $T$. In fact,
\cref{proposition: characterization of rigidity} shows that $T$-rigidity is equivalent to the structure graph containing a unique rooted copy of~$T$.

Let $G$ be a discrete weakly branch group. Note that by \cref{Theorem: definability} (see also \cite[Theorem 6.5]{GarridoPhD}), the congruence topology in $G$ is independent of the weakly branch action, so the closure of the image of any weakly branch action of $G$ in $\mathrm{Aut}~T$ is isomorphic to the completion $\overline{G}$ of $G$ with respect to the congruence topology. Thus, a weakly branch action $\tau:G\to\mathrm{Aut}~T$ induces a weakly branch action $\overline{\tau}:\overline{G}\to\mathrm{Aut}~T$. Therefore, one may wonder to what extent are the $T$-rigidity of $G$ and $\overline{G}$ related. We prove the following:

\begin{Theorem}
    \label{Theorem: T-rigidity in finite quotients}
    Let $G$ be a group admitting a weakly branch action $\tau:G\to\mathrm{Aut}~T$. Then $G$ is $T$-rigid if and only if its completion $\overline{G}$ with respect to the congruence topology is $T$-rigid. 
\end{Theorem}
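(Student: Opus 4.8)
The plan is to reduce both implications to the characterization of $T$-rigidity in \cref{proposition: characterization of rigidity} and to transport the two families $\mathcal{F}_T$ and $\mathcal{S}_\rho$ between $G$ and $\overline{G}$ by taking closures. To keep track of the ambient group I write $\mathcal{F}_T^G,\mathcal{S}_\rho^G$ for these collections in $G$ and $\mathcal{F}_T^{\overline{G}},\mathcal{S}_\sigma^{\overline{G}}$ for the analogous ones in $\overline{G}$. Fix the weakly branch action $\tau$; it induces $\overline{\tau}:\overline{G}\to\mathrm{Aut}~T$ with $\overline{\tau}(\overline{G})=\overline{\tau(G)}$, and this induced action is again weakly branch by the discussion preceding the statement. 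I will only ever use this single induced action, so no general correspondence between all weakly branch actions of $G$ and of $\overline{G}$ is needed.

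First I would record the elementary lattice correspondence underlying the whole argument. Since $G$ is dense in $\overline{G}$ and every subgroup occurring in $\mathcal{F}_T^G$ or $\mathcal{S}_\tau^G$ contains some level-stabilizer $\mathrm{St}_G(n)$, hence is open in the congruence topology, the assignment $H\mapsto\overline{H}$ (closure in $\overline{G}$) is an index- and inclusion-preserving bijection from the open finite-index subgroups of $G$ onto those of $\overline{G}$, with inverse $K\mapsto K\cap G$; moreover it satisfies $\overline{H_1\cap H_2}=\overline{H_1}\cap\overline{H_2}$ and carries $\mathrm{St}_G(n)$ to $\mathrm{St}_{\overline{G}}(n)$. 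Because the stabilizer of a vertex $v$ in $\mathrm{Aut}~T$ is open, density yields $\mathrm{st}_{\overline{\tau}}(v)=\overline{\mathrm{st}_\tau(v)}$ for every $v\in T$; consequently $\mathcal{S}_{\overline{\tau}}^{\overline{G}}=\{\overline{H}:H\in\mathcal{S}_\tau^G\}$, and intersecting back with $G$ gives $\mathcal{S}_{\overline{\tau}}^{\overline{G}}\cap G=\mathcal{S}_\tau^G$.

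The heart of the proof is the analogous statement for filtrations, which I would isolate as a lemma: the correspondence $H\mapsto\overline{H}$ restricts to a bijection between the $T$-filtrations of $G$ and those of $\overline{G}$, whence $\mathcal{F}_T^{\overline{G}}=\{\overline{H}:H\in\mathcal{F}_T^G\}$ and $\mathcal{F}_T^{\overline{G}}\cap G=\mathcal{F}_T^G$. This amounts to checking that each defining condition of a $T$-filtration in \cref{definition: T filtration} is invariant under the correspondence: the prescribed branching indices are preserved since $[\overline{G}:\overline{H}]=[G:H]$, and the descending-chain and intersection conditions are preserved because the correspondence respects inclusions and finite intersections and matches level-stabilizers. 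The remaining conditions, which pin down the geometry of a $T$-filtration in terms of rigid stabilizers and basal subgroups (equivalently, of the structure graph $\mathcal{B}_G$), must be shown to transfer under closure of basal subgroups; verifying that these geometric conditions hold verbatim after passing to $\overline{G}$ is the main obstacle and the only genuinely technical point.

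Granting the filtration lemma, the theorem follows formally. If $G$ is $T$-rigid, then $\mathcal{F}_T^G=\mathcal{S}_\tau^G$ by \cref{proposition: characterization of rigidity}; applying the bijection $H\mapsto\overline{H}$ and the two correspondences above yields $\mathcal{F}_T^{\overline{G}}=\mathcal{S}_{\overline{\tau}}^{\overline{G}}$, so $\overline{G}$ is $T$-rigid by the same characterization applied to the weakly branch action $\overline{\tau}$. Conversely, if $\overline{G}$ is $T$-rigid, then $\mathcal{F}_T^{\overline{G}}=\mathcal{S}_{\overline{\tau}}^{\overline{G}}$ for the induced action $\overline{\tau}$; intersecting both families with $G$ gives $\mathcal{F}_T^G=\mathcal{S}_\tau^G$, so $G$ is $T$-rigid. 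This completes the argument modulo the filtration lemma.
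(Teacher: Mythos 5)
Your outline follows essentially the same route as the paper: both proofs reduce $T$-rigidity to the equality $\mathcal{F}_T=\mathcal{S}$ via \cref{proposition: characterization of rigidity}, invoke the standard index-preserving closure correspondence $H\mapsto\overline{H}$ between congruence-open subgroups of $G$ and open subgroups of $\overline{G}$, and establish $\overline{\mathrm{st}_\tau(v)}=\mathrm{st}_{\overline{\tau}}(v)$ as the key identity. Your derivation of that identity by density (a dense subgroup meets every open subgroup densely, since vertex stabilizers are open in $\mathrm{Aut}~T$) is a minor variant of the paper's argument, which instead combines the inclusion $\overline{\mathrm{st}_G(v)}\le\mathrm{st}_{\overline{G}}(v)$ with an index count via the orbit--stabilizer theorem and level-transitivity; both are sound.

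The one substantive comment concerns the step you defer as ``the main obstacle and the only genuinely technical point.'' There is in fact no remaining obstacle: you have misread what a $T$-filtration requires. \cref{definition: T filtration} imposes exactly three conditions --- nesting, the index condition $|G:H_k|=N_k$, and membership in $\mathcal{L}_G$ --- and no further ``geometric'' conditions phrased in terms of rigid stabilizers or basal subgroups. Since $\mathcal{L}_G=\{L\le G\mid L\ge\mathrm{st}_\tau(u)\text{ for some }u\in T\}$, membership in $\mathcal{L}_G$ transfers in both directions directly from the identity you already proved: if $H\ge\mathrm{st}_\tau(u)$ then $\overline{H}\ge\overline{\mathrm{st}_\tau(u)}=\mathrm{st}_{\overline{\tau}}(u)$, and conversely if $K\ge\mathrm{st}_{\overline{\tau}}(u)$ then $K\cap G\ge\mathrm{st}_{\overline{\tau}}(u)\cap G\ge\mathrm{st}_\tau(u)$, with $\overline{K\cap G}=K$ by the correspondence. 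No analysis of closures of basal subgroups or of the structure graph of $\overline{G}$ is needed; this is precisely how the paper closes the argument (``the result will follow once we establish the equality $\overline{\mathrm{st}_G(v)}=\mathrm{st}_{\overline{G}}(v)$''). With that observation inserted, your filtration lemma holds and your proof is complete and equivalent to the paper's.
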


\cref{Theorem: T-rigidity in finite quotients} is quite surprising to us. To illustrate this fact, note that a recent result of Adams and Hyde in~\cite[Theorem 1.1]{AdamsHyde} implies that the closure of the iterated monodromy group of a unicritical Thurston polynomial only depends on its post-critical dynamics, i.e. on the orbit of its unique critical point. Thus, \cref{Theorem: T-rigidity in finite quotients} implies, for example, that the iterated monodromy group of the rabbit polynomial is $T_2$-rigid if and only if the iterated monodromy groups of the corabbit and the airplane polynomials are both $T_2$-rigid \cite{Thurston}, or that the iterated monodromy group of $z^2+i$ is $T_2$-rigid if and only if the Grigorchuk group
given by the
sequence $(01)^\infty$ (which corresponds to an obstructed Thurston polynomial) is $T_2$-rigid \cite{Thurston}. In particular, the rigidity of the iterated monodromy groups of unicritical Thurston polynomials seems to not be affected by Thurston obstructions; see \cite{Thurston, DouadyHubbard} for further details on Thurston obstructions.

In \cref{section: p-adic}, we specialize to weakly branch actions of fractal groups of $p$-adic automorphisms and give a characterization for $T_p$-rigidity which is very easy to check in this context. Grigorchuk and Wilson introduced the following sufficient conditions $(*)$ and $(**)$ for rigidity in \cite{GrigorchukWilson}. Note that we say that two distinct vertices $u,u'\in T$ are incomparable if none is a descendant of the other.

\begin{enumerate}
    \item[$(*)$] For each vertex $v\in T$ the stabilizer $\mathrm{st}_G(v)$ acts as a (transitive) cyclic group of prime order on the immediate descendants of $v$;
    \item[$(**)$] whenever $u$ and $u'$ are incomparable vertices and $v$ is a descendant of (not equal to) $u$, there is an element $g\in G$ fixing $u'$ and moving $v$.
\end{enumerate}

To the best of our knowledge, conditions $(*)$ and $(**)$ are still the state of the art for proving rigidity of branch actions; see for instance \cite{MoritzGGS} (note that Hardy's property (S) in \cite{Hardy} is not easy to check, it is a sufficient condition more along the lines of the characterization in \cref{proposition: characterization of rigidity}, while Garrido's conditions (A) and (B) in \cite{GarridoPhD} are just a slight generalization of conditions $(*)$ and $(**)$). We greatly generalize condition $(**)$ in \cref{theorem: sufficient condition for rigidity in p-adic}. 

Let $T_d$ be the $d$-regular rooted tree. We may identify $T_d$ with the free monoid of rank $d$ and write its vertices as finite words.  For $v\in T_d$ and $g\in \mathrm{Aut}~T_d$,  the \textit{section} $g|_v\in \mathrm{Aut}~T_d$ of $g$ at $v$ is the unique automorphism satisfying that, for every $u\in T_d$, we have
$$(vu)^g=v^gu^{g|_v}.$$

Recall that a group $G\le \mathrm{Aut}~T_d$ such that $g|_v\in G$ for every $v\in T_d$ and every $g\in G$ is called \textit{self-similar}. Furthermore, a self-similar group $G\le \mathrm{Aut}~T_d$ is \textit{fractal}, if $G$ is level-transitive and for every $v\in T_d$, the projection $\varphi_v:\mathrm{st}_G(v)\to G$ given by $g\mapsto g|_v$ is surjective.

Note that if a fractal group $G\le \mathrm{Aut}~T_d$ satisfies condition $(*)$, then $d=p$ is prime and $G\le W_p$.

\begin{Theorem}
\label{Corollary: sufficient for p-adic}
    Let $G\le W_p$ be a fractal weakly branch group. Then $G$ is $T_p$-rigid if and only if
    $$\frac{G}{\mathrm{St}_G(2)}\not\cong C_p\times C_p.$$
\end{Theorem}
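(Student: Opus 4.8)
The plan is to combine the generalized sufficient condition of \cref{theorem: sufficient condition for rigidity in p-adic} for the forward implication with the $T$-filtration characterization of \cref{proposition: characterization of rigidity} for the converse. First I would record that condition $(*)$ is automatic here: since $G$ is fractal, for every vertex $v$ the projection $\varphi_v:\mathrm{st}_G(v)\to G$ is onto, so $\mathrm{st}_G(v)$ acts on the $p$ children of $v$ exactly as $G$ acts on the first level; as $G\le W_p$ this is a transitive $p$-subgroup of the symmetric group on $p$ points, hence cyclic of order $p$. In particular $G/\mathrm{St}_G(1)\cong C_p$, so $Q:=G/\mathrm{St}_G(2)$ is a level-transitive subgroup of $C_p\wr C_p$ with $|Q|\ge p^2$, and $K:=\mathrm{St}_G(1)/\mathrm{St}_G(2)\trianglelefteq Q$ is the kernel of the first-level action, whose orbits on the second level are the $p$ first-level subtrees. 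Since every group of order $p^2$ is abelian, $Q\cong C_p\times C_p$ occurs exactly when $Q$ is abelian of order $p^2$; otherwise $Q\cong C_{p^2}$ or $|Q|>p^2$.

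For the implication $Q\not\cong C_p\times C_p\Rightarrow T_p$-rigid, I would verify the hypothesis of \cref{theorem: sufficient condition for rigidity in p-adic} and quote rigidity. With $(*)$ in hand, the remaining (generalized $(**)$) hypothesis concerns the richness of the stabilizers $\mathrm{st}_G(v)$ at the top two levels, which via fractality and self-similarity propagates down the whole tree; the content is to check that it holds precisely when $Q\not\cong C_p\times C_p$. The degenerate case is exactly $|Q|=p^2$: there the second-level point-stabilizers of $Q$ are trivial, forcing $\mathrm{st}_G(v)=\mathrm{St}_G(2)$ for every second-level vertex $v$. When $Q\cong C_{p^2}$ this is harmless because $Q$ has a unique subgroup of order $p$, pinning the fibration; when $|Q|>p^2$ the second-level stabilizers are nontrivial and $\mathrm{st}_G(v)$ is correspondingly richer; only $Q\cong C_p\times C_p$, with its $p+1$ subgroups of order $p$ and trivial second-level stabilizers, defeats the hypothesis.

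For the converse $Q\cong C_p\times C_p\Rightarrow$ not $T_p$-rigid, I would argue through \cref{proposition: characterization of rigidity}. Here $Q$ acts regularly on the $p^2$ second-level vertices and $K$ is but one of its $p+1$ subgroups of order $p$, all of which are normal since $Q$ is abelian. Choosing any $K'\neq K$ of order $p$ and completing the corresponding alternative first-level fibration to a full $T_p$-filtration of $G$ shows that the index-$p$ subgroup containing $\mathrm{St}_G(2)$ with image $K'$ lies in $\mathcal{F}_{T_p}$; and so does $\mathrm{St}_G(1)$, the one with image $K$, since it is a vertex stabilizer of the inclusion. These two subgroups are distinct normal subgroups of index $p$, hence non-conjugate in $G$. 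Now for any weakly branch action $\rho:G\to\mathrm{Aut}~T_p$ the index-$p$ members of $\mathcal{S}_\rho$ are exactly the stabilizers of first-level vertices, which form a single conjugacy class; therefore $\mathcal{S}_\rho$ can contain at most one of our two subgroups, and since $\mathcal{S}_\rho\subseteq\mathcal{F}_{T_p}$ always, we get $\mathcal{S}_\rho\subsetneq\mathcal{F}_{T_p}$ for every $\rho$, so \cref{proposition: characterization of rigidity} yields that $G$ is not $T_p$-rigid.

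The main obstacle is the single step in the converse where the alternative fibration $K'$ is completed to a genuine $T_p$-filtration, equivalently where one exhibits a second rooted copy of $T_p$ inside the structure graph $\mathcal{B}_G$ built from honest basal subgroups. This is exactly the feature distinguishing $C_p\times C_p$ from the rigid cases $C_{p^2}$ and $|Q|>p^2$, where the competing top fibrations either do not exist or fail to extend; I expect the extension to follow from the regular, self-centralizing structure of $Q\cong C_p\times C_p$ together with fractality, the $\mathrm{GL}_2(\mathbb{F}_p)$-symmetry permuting the $p+1$ fibrations making the whole self-similar picture insensitive to the choice of $K'$. A secondary technical point, on the forward side, is pinning down that the hypothesis of \cref{theorem: sufficient condition for rigidity in p-adic} is literally equivalent to $Q\not\cong C_p\times C_p$ for fractal subgroups of $W_p$.
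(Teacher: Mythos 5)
Your forward direction has a genuine gap: the $C_{p^2}$ case. For a fractal $G\le W_p$, condition $(\mathrm{N})$ of \cref{theorem: sufficient condition for rigidity in p-adic} reduces (exactly as you say, via $\varphi_v(\mathrm{st}_G(v))=G$) to the single inequality $|\mathrm{St}_G(1):\mathrm{St}_G(2)|>p$, i.e.\ to $|G:\mathrm{St}_G(2)|>p^2$ --- equivalently, by \cref{remark: normality}, to the level-two vertex stabilizers not being normal. It is therefore \emph{not} equivalent to $G/\mathrm{St}_G(2)\not\cong C_p\times C_p$: when $G/\mathrm{St}_G(2)\cong C_{p^2}$ one still has $\mathrm{st}_G(w)=\mathrm{St}_G(2)\trianglelefteq G$ for every level-two vertex $w$, so $(\mathrm{N})$ fails, and your remark that the unique subgroup of order $p$ in $C_{p^2}$ ``pins the fibration'' does not rescue the hypothesis --- it gestures at why rigidity should still hold, but it is not a proof. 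The paper treats $C_{p^2}$ by a genuinely different argument: $G$ embeds in the finite-type group with pattern group $G/\mathrm{St}_G(2)=\langle\xi\rangle$, where $\xi$ has order $p^2$ in $\mathrm{Aut}~T_p^{[2]}$, and one deduces the dichotomy that any section $g|_v$ of an element fixing $v$ either fixes all children of $v$ or acts level-transitively on $T_v$ (because depth-two sections of order $p^2$ propagate under taking $p$th powers); combined with \cref{lemma: orbits of elements determine basal} and induction on the index, this forces every $H\in\mathcal{F}_{T_p}$ to be a vertex stabilizer. Some such argument is indispensable, since $T_p$-rigidity here is \emph{not} witnessed by property $(\mathrm{S})$ failing or holding mechanically --- $(\mathrm{N})$ is sufficient but not necessary.

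Your converse is essentially the paper's proof and is fine in substance, but the step you flag as ``the main obstacle'' is a non-issue, and the $\mathrm{GL}_2(\mathbb{F}_p)$-symmetry you invoke is not needed. By \cref{definition: T filtration}, a $T_p$-filtration only requires a nested chain of subgroups in $\mathcal{L}_G$ with indices $p^k$; since $|G:\mathrm{St}_G(2)|=p^2$ forces $\mathrm{St}_G(2)=\mathrm{st}_G(v)$ for every level-two vertex $v$, your lift $K'$ of any order-$p$ subgroup $K'\ne K$ of $Q$ satisfies $K'\ge\mathrm{st}_G(v)$, and one completes the filtration below $K'$ simply with the honest vertex stabilizers $\mathrm{st}_G(v)\ge\mathrm{st}_G(v_3)\ge\cdots$ along any ray through $v$. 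No second rooted copy of $T_p$ inside $\mathcal{B}_G$, and no coherent ``alternative fibration,'' has to be constructed. Moreover the conjugacy-class argument you run is more than needed: $K'$ surjects onto $G/\mathrm{St}_G(1)$, hence fixes no vertex except the root while having index $p$, so $K'\in\mathcal{F}_{T_p}\setminus\mathcal{S}_\rho$ directly, and \cref{proposition: characterization of rigidity} gives non-rigidity from this single action (this part, as the paper notes, uses only weak branchness, not fractality).
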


We note that the implication ``$G/\mathrm{St}_G(2)\cong C_p\times C_p \implies G$ is not $T_p$-rigid" holds for any (not necessarily fractal) weakly branch group $G\le W_p$. 

\cref{Corollary: sufficient for p-adic} makes it very easy to decide $T_p$-rigidity for fractal groups of $p$-adic automorphisms. It immediately yields $T_p$-rigidity of many well-known examples of branch and weakly branch groups: the first Grigorchuk group \cite{GrigorchukBurnside}, all the GGS-groups acting on the $p$-adic tree \cite{GGSHausdorff} and their generalizations \cite{AnithaJone}, the Basilica group \cite{Basilica} and its different generalizations to the $p$-adic tree \cite{pBasilica, GeneralizedBasilica}, and the Brunner-Sidki-Vieira group and its generalizations \cite{BSV,BSV2}, among others.

In particular, \cref{Corollary: sufficient for p-adic} implies that if $G\le W_p$ is a fractal weakly branch group containing a level-transitive element, then $G$ is $T_p$-rigid. This is the case for the iterated monodromy groups of unicritical Thurston polynomials. Indeed, these groups are fractal and they are given by kneading automata, so the product of all the generators in any order is always a level-transitive element \cite{SelfSimilar}. Thus, such an iterated monodromy group is $T_p$-rigid whenever it is weakly branch. For example, this is the case for the quadratic Thurston polynomials not linearly conjugate to the powering map $z^2$ or the Chebyshev polynomial $2z^2-1$ by a result of Bartholdi and Nekrashevych \cite[Theorems 3.12 and 4.10]{BartholdiNekra}, and for the closure of unicritical Thurston polynomials of degree $p$ not linearly conjugate to the powering map $z^p$ or the $p$th Chebyshev polynomial, by combining a result on the Hausdorff dimension of these groups due to Adams and Hyde \cite[Proposition~1.4]{AdamsHyde} and a result of the author \cite[Lemma~5.1]{AV}. Furthermore, this implies that a weakly branch action of the iterated monodromy group of a unicritical Thurston polynomial $f$, can only be the iterated monodromy action of $f$. In this case, the monodromy action of $f$ is given by the first-order theory of its iterated monodromy group by \cref{Theorem: definability}. This yields a new interplay between complex dynamics and model theory.

Weak branchness is closely related to Hausdorff dimension. If a self-similar closed subgroup $G$ has positive Hausdorff dimension in $\mathrm{Aut}~T$, then $G$ is weakly branch by the aforementioned result of the author \cite[Lemma~5.1]{AV}. More generally, an arbitrary positive-dimensional level-transitive closed subgroup of $\mathrm{Aut}~T$ is such that almost all of its projections are weakly branch \cite[Theorem~2.1]{ArborealJorge}.

An interesting consequence of \cref{Corollary: sufficient for p-adic} is that having large Hausdorff dimension in $W_p$ is an obstruction for fractal weakly branch subgroups of $W_p$ to not be $T_p$-rigid:

\begin{Corollary}
    \label{Corollary: hdim and rigidity}
    Let $G\le W_p$ be a fractal weakly branch group such that its closure $\overline{G}$ satisfies
    $$\mathrm{hdim}_{W_p}(\overline{G})>\frac{1}{p}.$$
    Then $G$ is $T_p$-rigid.
\end{Corollary}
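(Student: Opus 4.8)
The plan is to derive \cref{Corollary: hdim and rigidity} as a direct consequence of \cref{Corollary: sufficient for p-adic} by ruling out the single bad case. By \cref{Corollary: sufficient for p-adic}, a fractal weakly branch group $G\le W_p$ is $T_p$-rigid unless $G/\mathrm{St}_G(2)\cong C_p\times C_p$. So the entire task reduces to showing that the Hausdorff dimension hypothesis $\mathrm{hdim}_{W_p}(\overline{G})>1/p$ is incompatible with $G/\mathrm{St}_G(2)\cong C_p\times C_p$. First I would note that Hausdorff dimension in $W_p$ is insensitive to taking closures, so $G/\mathrm{St}_G(2)\cong \overline{G}/\mathrm{St}_{\overline{G}}(2)$ and I may work with the closed group $\overline{G}$ throughout; in particular the index $|\overline{G}:\mathrm{St}_{\overline{G}}(2)|$ equals $|G:\mathrm{St}_G(2)|$.

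The heart of the argument is an upper bound on the Hausdorff dimension coming from the second-level quotient. The key idea is that self-similarity forces the dimension to be controlled level-by-level, and the isomorphism $G/\mathrm{St}_G(2)\cong C_p\times C_p$ caps the contribution of the first two levels. Concretely, I would compute $\log|W_p:\mathrm{St}(2)|$ and observe that $|G:\mathrm{St}_G(2)|=p^2$ under the assumption, so that the level-$2$ ``density'' $\log|G:\mathrm{St}_G(2)|/\log|W_p:\mathrm{St}(2)|$ is small. The plan is then to exploit the recursive/self-similar structure: because $G$ is fractal, the projection $\varphi_v:\mathrm{st}_G(v)\to G$ is surjective for every vertex $v$, which lets one relate $|\overline{G}:\mathrm{St}_{\overline{G}}(2n)|$ to powers of $|\overline{G}:\mathrm{St}_{\overline{G}}(2)|$ via the block structure of $\mathrm{Aut}~T_p$. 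Passing to the $\liminf$ in the formula
\begin{align*}
    \mathrm{hdim}_{W_p}(\overline{G})=\liminf_{n\to\infty}\frac{\log|\overline{G}:\mathrm{St}_{\overline{G}}(n)|}{\log|W_p:\mathrm{St}(n)|},
\end{align*}
the self-similarity should yield $\mathrm{hdim}_{W_p}(\overline{G})\le 1/p$ whenever the first two levels already exhibit the $C_p\times C_p$ bottleneck, contradicting the hypothesis.

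The main obstacle I anticipate is justifying the self-similar upper bound cleanly: while fractality gives surjectivity of sections, it does not a priori give a simple multiplicative recursion for the indices $|\overline{G}:\mathrm{St}_{\overline{G}}(n)|$, since the first-level action and the sections interact. I would handle this by recalling the standard identity $\log|W_p:\mathrm{St}(n)| = \frac{p^n-1}{p-1}\log p$ and analysing the subgroup of $\mathrm{St}_{\overline{G}}(1)/\mathrm{St}_{\overline{G}}(2)$ obtained by projecting to each of the $p$ first-level subtrees; the condition $G/\mathrm{St}_G(2)\cong C_p\times C_p$ forces these projections to be as small as possible (a single copy of $C_p$ distributed across the top), which is exactly what pins the asymptotic density at $1/p$. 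The cleanest route is probably to first establish the purely combinatorial fact that $G/\mathrm{St}_G(2)\cong C_p\times C_p$ together with fractality forces $\overline{G}$ to have dimension at most $1/p$, and I expect this comparison—translating the finite second-level datum into an asymptotic dimension bound—to be the only genuinely delicate point, with everything else following formally from \cref{Corollary: sufficient for p-adic}.
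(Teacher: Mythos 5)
Your overall strategy coincides with the paper's: reduce, via \cref{Corollary: sufficient for p-adic}, to showing that $G/\mathrm{St}_G(2)\cong C_p\times C_p$ (hence $|G:\mathrm{St}_G(2)|=p^2$) forces $\mathrm{hdim}_{W_p}(\overline{G})\le 1/p$. The problem is that the step you defer as ``the only genuinely delicate point'' is the entire content of the proof, and the one concrete mechanism you propose for it --- relating $|\overline{G}:\mathrm{St}_{\overline{G}}(2n)|$ to powers of $|\overline{G}:\mathrm{St}_{\overline{G}}(2)|$ --- is quantitatively insufficient. Writing $a_n:=\log_p|G:\mathrm{St}_G(n)|$, self-similarity gives the embedding $\mathrm{St}_G(1)/\mathrm{St}_G(n+1)\hookrightarrow (G/\mathrm{St}_G(n))^p$ and hence only the cumulative recursion $a_{n+1}\le 1+p\,a_n$; starting from $a_2=2$ this yields $a_n\le 2p^{n-2}+\frac{p^{n-2}-1}{p-1}$, and dividing by $\log_p|W_p:\mathrm{St}(n)|=\frac{p^n-1}{p-1}$ gives only $\mathrm{hdim}_{W_p}(\overline{G})\le \frac{2p-1}{p^2}$, which is strictly larger than $\frac{1}{p}$ for every prime $p$. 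So no recursion on full indices seeded by the level-$2$ datum can close the argument.

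What is actually needed is the \emph{layerwise} inequality: for self-similar $G\le W_p$ the map $g\mapsto (g|_v)_{v \text{ at level }1}$ embeds $\mathrm{St}_G(n)/\mathrm{St}_G(n+1)$ into $\big(\mathrm{St}_G(n-1)/\mathrm{St}_G(n)\big)^p$, i.e. $s_n(G)\ge 0$ in the notation of \cref{theorem: formula hdim jorge}. This is exactly what the paper invokes: since $|G:\mathrm{St}_G(1)|=p$ (level-transitivity in $W_p$) and $|\mathrm{St}_G(1):\mathrm{St}_G(2)|=p$ give $s_1(G)=p-1$, the formula $\mathrm{hdim}_{W_p}(\overline{G})=1-S_G(1/p)$ together with non-negativity of the remaining $s_n(G)$ yields $\mathrm{hdim}_{W_p}(\overline{G})\le 1-\frac{p-1}{p}=\frac{1}{p}$ in one line. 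Your closing remark about analysing $\mathrm{St}_{\overline{G}}(1)/\mathrm{St}_{\overline{G}}(2)$ does point toward consecutive layers rather than cumulative indices, and the layerwise embedding is elementary, so the gap is fillable; but as written the proposal neither proves the needed inequality nor cites \cref{theorem: formula hdim jorge}, and the recursion it does state would establish only the weaker bound $\frac{2p-1}{p^2}$. A further small point: surjectivity of the projections $\varphi_v$ (fractality) plays no role in the dimension bound --- self-similarity plus level-transitivity suffice there; fractality is needed only to apply \cref{Corollary: sufficient for p-adic} itself.
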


To the best of the author's knowledge, \cref{Corollary: hdim and rigidity} is the first result in the literature relating the Hausdorff dimension of a weakly branch group to the rigidity of its weakly branch actions.

\cref{Corollary: hdim and rigidity} also allows us to use Hausdorff dimension as a group invariant. We can use this fact to show that some fractal weakly branch groups are not isomorphic as abstract groups. To illustrate this, we apply this criterion to show that the Basilica group and the Brunner-Sidki-Viera group \cite{BSV} are not isomorphic as abstract groups in \cref{corollary: basilica bsv}. 

We also note that the lower-bound in \cref{Corollary: hdim and rigidity} is sharp in general, as there exist non-rigid branch subgroups of $W_p$ with Hausdorff dimension precisely $1/p$. However, the lower bound in \cref{Corollary: hdim and rigidity} can be improved to include $1/p$ if the group $G$ is further assumed to be topologically finitely generated; see \cref{remark: tfg}.

Finally, in \cref{section: Boston conjecture}, we get back to Boston's conjecture. \cref{Corollary: hdim and rigidity} suggests that being zero-dimensional could be an obstruction for $T_p$-rigidity. Let us consider, for each $n\ge 1$, the just-infinite branch group $G_n\le W_p$ with zero-dimensional closure introduced by the author in \cite[Section 6]{RestrictedSpectra}; see \cref{section: Boston conjecture} for the definition of $G_n$. As $G_n$ satisfies that $G_n/\mathrm{St}_{G_n}(2)\cong C_p\times C_p$, it is not $T_p$-rigid as remarked below \cref{Corollary: sufficient for p-adic}. Thus, in order to disprove Boston's conjecture, we need to find a more subtle approach. 

There is a natural tree where each $G_n$ acts on: a tree $T_n$ obtained from the $p$-adic tree by deletion of levels; see \cref{section: Boston conjecture}. In fact, the proof of the zero-dimensionality of the closure of $G_n$ in $W_p$ \cite[Proposition~6.12]{RestrictedSpectra} does not use the action of $G_n$ on the $p$-adic tree, but its action on this (not regular) spherically homogeneous rooted tree $T_n$. Other groups acting naturally on these ``growing trees" have been recently considered by Skipper and Thillaisundaram in \cite{RachelAnitha}. Applying the same arguments as in \cite[Proposition~6.12]{RestrictedSpectra}, one only needs to show that the groups $G_n$ are $T_n$-rigid in order to obtain that all their possible branch actions on the $p$-adic tree are zero-dimensional. We prove $T_n$-rigidity of $G_n$ in \cref{theorem: my groups are T-rigid}. Hence, we finally obtain the following:

\begin{Theorem}
    \label{Theorem: Boston conjecture}
    For each $n\ge 1$, the closure in $W_p$ of the just-infinite branch group $G_n\le W_p$ is a counterexample to Boston's conjecture.
\end{Theorem}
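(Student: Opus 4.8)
The plan is to assemble the ingredients developed throughout the paper together with the author's previous work in \cite{RestrictedSpectra, ArborealJorge} to conclude that $\overline{G_n}$, the closure of $G_n$ in $W_p$, is a just-infinite branch pro-$p$ group which is branch yet admits no positive-dimensional embedding into $W_p$. This directly contradicts the ``only if'' direction of \cref{Conjecture: Boston conjecture}. By \cite[Section 6]{RestrictedSpectra}, each $G_n$ is already known to be a just-infinite branch pro-$p$ group whose closure in $W_p$ is zero-dimensional, so the single new fact we must establish is that \emph{every} branch action of $\overline{G_n}$ on a $p$-adic tree is zero-dimensional, not merely the one already exhibited.

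First I would invoke \cref{theorem: my groups are T-rigid}, which establishes that $G_n$ is $T_n$-rigid, where $T_n$ is the spherically homogeneous tree obtained from $T_p$ by deletion of levels. The point of passing to $T_n$ is that, although $G_n/\mathrm{St}_{G_n}(2)\cong C_p\times C_p$ defeats $T_p$-rigidity via the remark below \cref{Corollary: sufficient for p-adic}, the action on $T_n$ is genuinely rigid: its weakly branch action on $T_n$ is unique up to conjugacy in $\mathrm{Aut}~T_n$. By \cref{Theorem: T-rigidity in finite quotients}, $T_n$-rigidity of $G_n$ passes to $T_n$-rigidity of its closure $\overline{G_n}$, so the closure has an essentially unique branch action on $T_n$ as well.

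Next I would transfer this uniqueness statement into a statement about branch actions on the $p$-adic tree itself. The key observation, following \cite[Proposition~6.12]{RestrictedSpectra}, is that the zero-dimensionality computation for $\overline{G_n}$ does not depend on the regular structure of $T_p$ but only on the action on $T_n$; the $T_n$-rigidity then forces any branch action of $\overline{G_n}$ on $T_p$ to factor through (be conjugate-compatible with) the canonical action on $T_n$, so the level-index growth rate, and hence the Hausdorff dimension in $W_p$, is the same for all such embeddings. Applying the same estimate as in \cite[Proposition~6.12]{RestrictedSpectra} to each branch embedding yields that every branch action of $\overline{G_n}$ into $W_p$ is zero-dimensional. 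Combined with the fact, recorded in the introduction, that non-branch just-infinite pro-$p$ subgroups of $W_p$ are automatically zero-dimensional, this shows that no embedding of $\overline{G_n}$ into $W_p$ can have positive Hausdorff dimension.

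The main obstacle I anticipate is the transfer step: rigorously reducing an \emph{arbitrary} branch action on $T_p$ to the controlled action on $T_n$. The subtlety is that $T_n$ arises from $T_p$ by deleting levels, so one must argue that any branch embedding into $W_p$ refines to a branch action on $T_n$ compatible with rigidity, and that \cref{Theorem: T-rigidity in finite quotients} and \cref{proposition: characterization of rigidity} can be applied to pin down the filtration $\mathcal{F}_{T_n}=\mathcal{S}_\rho$ uniformly across embeddings. Once this identification of filtrations is in place, the dimension estimate is the routine liminf computation of \cite[Proposition~6.12]{RestrictedSpectra}, and the contradiction with Boston's conjecture is immediate.
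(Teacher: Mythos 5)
Your proposal is correct and follows essentially the same route as the paper: $T_n$-rigidity of $G_n$ (\cref{theorem: my groups are T-rigid}), the induced action on $T_n$ obtained from an arbitrary branch action on $T_p$ by deletion of levels, conjugacy in $\mathrm{Aut}~T_n$ forcing equality of the indices $\log_p|G_n:\mathrm{St}(t_k^n)|$ along the subsequence of levels $\{t_k^n\}_{k\ge 0}$, and the $\liminf$ comparison with \cite[Proposition~6.12]{RestrictedSpectra} to conclude zero Hausdorff dimension for every branch action. The only cosmetic difference is that you pass to the closure via \cref{Theorem: T-rigidity in finite quotients} and work with branch actions of $\overline{G_n}$, whereas the paper works directly with branch actions of the discrete group $G_n$ (invoking the remark from \cite{RestrictedSpectra} that this suffices); your anticipated ``transfer obstacle'' is in fact immediate, exactly as the paper handles it.
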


\cref{Theorem: Boston conjecture} concludes the disproval of Boston's conjecture initiated by the author in \cite{RestrictedSpectra}.

We remark that in order to prove this $T_n$-rigidity, one needs the full power of the characterization in \cref{proposition: characterization of rigidity}. The previous results on rigidity of Hardy, Garrido, Grigorchuk and Wilson, do not apply to branch actions on the growing trees $T_n$. The proof of \cref{Theorem: Boston conjecture} suggests that it is always possible to find (with some work) a tree by deletion of levels, where the weakly branch actions of a given group are rigid. Thus, we pose the following question:

\begin{Question}
    Let $G\le \mathrm{Aut}~T$ be a weakly branch group. Does there always exist a tree $\widetilde{T}$ obtained by deletion of levels from $T$ such that $G$ is $\widetilde{T}$-rigid?
\end{Question}

To conclude, let us remark that even if \cref{Theorem: Boston conjecture} disproves Boston's conjecture, we believe that the intuition of Boston leading to \cref{Conjecture: Boston conjecture} was correct in the following sense. As we mentioned before, it has been proved in \cite[Corollary 3]{ArborealJorge} that positive-dimensional subgroups of $W_p$ satisfy Boston's generalization of the Fontaine-Mazur conjecture. Hence, it is reasonable to expect that the just-infinite pro-$p$ Galois groups of unramified pro-$p$ extensions of number fields may be realized as positive-dimensional subgroups of $W_p$, and thus as branch groups. This would establish the veracity of the Fontaine-Mazur conjecture and its generalization by Boston, as a direct application of \cite[Corollary 3]{ArborealJorge}.

\subsection*{\textit{\textmd{Notation}}} For a subset $S\subseteq G$, we write $\mathrm{N}_G(S)$ and $\mathrm{C}_G(S)$ for its normalizer and its centralizer in $G$ respectively. We also write  $\mathrm{C}_G^2(S):=\mathrm{C}_G(\mathrm{C}_G(S))$. We use exponential notation for the action of a group on a rooted tree and its boundary.  For subgroups $H\le L\le G$, we denote by $H^L\le G$ the normal closure of $H$ in $L$. For a (weakly) branch action $\rho:G\to\mathrm{Aut}~T$, we shall write $\mathrm{st}_\rho(v), \mathrm{rist}_\rho(v),\mathrm{St}_\rho(n)$ and $\mathrm{Rist}_\rho(n)$ for the subgroups of $G$ corresponding to the stabilizer and the rigid stabilizer of $v\in T$ in $\rho(G)$, and the stabilizer and rigid stabilizer of level $n\ge1$ in $\rho(G)$ respectively. We shall drop $\rho$ from the notation and simply write  $\mathrm{st}_G(v)$, etc to simplify notation when $G$ is viewed directly as a subgroup of $\mathrm{Aut}~T$, and $\mathrm{st}(v)$, etc when $G=\mathrm{Aut}~T$.

\subsection*{Acknowledgements} 

I would like to thank Gustavo A. Fernández-Alcober, Dominik Francoeur, Rostislav Grigorchuk and Santiago Radi for fruitful discussions on the structure lattice, the structure graph and the rigidity of branch actions, and Anitha Thillaisundaram for her useful feedback on a first version of this manuscript. I would like to also thank J. Moritz Petschick for pointing out the role of different branch actions in Boston's conjecture, which was the main motivation for the present paper.

\section{The structure graph of a weakly branch group}
\label{section: the basal graph of a weakly branch group}

Here, we introduce the main tool for the study of weakly branch actions: the structure graph. We define the structure graph from a new perspective, avoiding the difficulties to extend its definition to the class of weakly branch groups mentioned by Garrido in \cite{GarridoPhD}. The goal of the section is to show that the main properties of the structure graph of a branch group still hold in the context of weakly branch groups.

\subsection{Basal subgroups}

A subgroup $B\le G$ is called \textit{basal} if it has finitely many conjugates and its normal closure is the direct product of its distinct finitely many conjugates, i.e.
$$B^G=B\times B^{g_1}\times\dotsb\times B^{g_r}$$
for some $g_1,\dotsc,g_r\in G$. From the definition, it is immediate that for a basal subgroup $B\le G$ and any $g\in G$, the following are all equivalent:
\begin{enumerate}[\normalfont(i)]
    \item $B\ne B^g$;
    \item $[B,B^g]=1$;
    \item $B\cap B^g=1$.
\end{enumerate}

We write $B(G)$ for the collection of all basal subgroups of a group $G$. In the following lemma, we record some well-known properties of basal subgroups:

\begin{lemma}
\label{lemma: properties of basal subgroups}
    Let $B\in B(G)$ be a basal subgroup of a group $G$. Then the following holds:
    \begin{enumerate}[\normalfont(i)]
        \item the normalizer $\mathrm{N}_G(B)$ is of finite index in $G$;
        \item for any $L\ge \mathrm{N}_G(B)$, the subgroup $B^L$ is basal and $\mathrm{N}_G(B^L)=L$.
    \end{enumerate}
\end{lemma}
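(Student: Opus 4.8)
The plan is to handle the two parts in sequence, deriving (i) directly from the definition and then bootstrapping (ii) from it. For (i), I would invoke the orbit--stabilizer correspondence: the group $G$ acts by conjugation on the set of conjugates of $B$, the stabilizer of $B$ under this action is precisely $\mathrm{N}_G(B)$, and the orbit of $B$ is the full set of its conjugates. Since $B$ is basal it has only finitely many conjugates by definition, so the orbit is finite and hence $[G:\mathrm{N}_G(B)]<\infty$. This settles (i), and it also records the general principle that a basal subgroup has a finite-index normalizer, which I will reapply to $B^L$.

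For (ii), the first step is to describe $B^L$ explicitly as an internal direct product. Because $\mathrm{N}_G(B)\le L$, the distinct $L$-conjugates of $B$ are parametrized by the right cosets $\mathrm{N}_G(B)\backslash L$; choosing representatives $1=\ell_0,\ell_1,\dots,\ell_s$ makes $B^{\ell_0},\dots,B^{\ell_s}$ the distinct $L$-conjugates. These are among the distinct $G$-conjugates appearing in the decomposition $B^G=B\times B^{g_1}\times\dots\times B^{g_r}$, so by the equivalence (i)$\Leftrightarrow$(ii)$\Leftrightarrow$(iii) recorded above they pairwise commute and meet trivially, and a subcollection of the factors of an internal direct product generates their direct product; thus
\[
B^L=B^{\ell_0}\times B^{\ell_1}\times\dots\times B^{\ell_s}.
\]
The crucial intermediate claim is a \emph{recovery principle}: the $G$-conjugates of $B$ contained in $B^L$ are exactly its direct factors $B^{\ell_0},\dots,B^{\ell_s}$. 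To prove that any $B^h\subseteq B^L$ must be one of them, I would use uniqueness of the coordinate expression in the internal direct product $B^G$: a nontrivial element of $B^h$ is supported only on the factor $B^h$, so if it also lies in $B^L$ (the product of the factors indexed by $\ell_0,\dots,\ell_s$), then $B^h$ must itself be one of those factors. With this in hand, $\mathrm{N}_G(B^L)=L$ follows quickly: the inclusion $L\le\mathrm{N}_G(B^L)$ is immediate since $B^L$ is by construction normal in $L$, and conversely any $g\in\mathrm{N}_G(B^L)$ sends $B=B^{\ell_0}$ to $B^g\subseteq(B^L)^g=B^L$, so $B^g=B^{\ell_j}$ for some $j$, whence $g\ell_j^{-1}\in\mathrm{N}_G(B)\le L$ forces $g\in L$. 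Finite index of $\mathrm{N}_G(B^L)=L$ then follows from (i) via $[G:L]\le[G:\mathrm{N}_G(B)]<\infty$, so $B^L$ has finitely many conjugates.

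It remains to exhibit $(B^L)^G$ as the direct product of the distinct conjugates of $B^L$, which completes the verification that $B^L$ is basal. Choosing representatives $1=t_0,t_1,\dots,t_m$ of the right cosets $L\backslash G$, the distinct conjugates of $B^L$ are $(B^L)^{t_0},\dots,(B^L)^{t_m}$ (distinct precisely because $\mathrm{N}_G(B^L)=L$), and each equals $\prod_i B^{\ell_i t_k}$. The bookkeeping step is to observe that the right cosets $\mathrm{N}_G(B)\ell_i t_k$ run exactly once over all right cosets of $\mathrm{N}_G(B)$ in $G$ as $(i,k)$ varies, since $\mathrm{N}_G(B)\le L$ places each such coset in a unique block $Lt_k$. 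Hence the factors $B^{\ell_i t_k}$ are precisely all the distinct $G$-conjugates of $B$, so grouping the internal direct product $B^G=(B^L)^G$ block-by-block in $k$ yields
\[
(B^L)^G=(B^L)^{t_0}\times(B^L)^{t_1}\times\dots\times(B^L)^{t_m},
\]
as required.

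The main obstacle I anticipate is the recovery principle together with the coset bookkeeping: one must argue that the direct factors of $B^L$ are \emph{canonically} determined (not merely that some decomposition exists), and that right multiplication by the representatives $t_k$ permutes the right cosets of $\mathrm{N}_G(B)$ block-wise with neither collision nor omission. Once the correspondence between $G$-conjugates of $B$ and right cosets $\mathrm{N}_G(B)\backslash G$ is set up cleanly and the uniqueness of coordinates in $B^G$ is exploited, both $\mathrm{N}_G(B^L)=L$ and the direct product decomposition of $(B^L)^G$ fall out of it.
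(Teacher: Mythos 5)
Your proposal is correct, and it does more than the paper does: for part (i) you use exactly the paper's argument (orbit--stabilizer, noting that the index of the normalizer counts the distinct conjugates), but for part (ii) the paper gives no argument at all --- it simply cites Lemma 10.2.2(d) of Hardy's thesis --- whereas you reconstruct a complete self-contained proof. Your reconstruction is sound at every step I checked: the parametrization of the distinct $L$-conjugates of $B$ by right cosets $\mathrm{N}_G(B)\backslash L$ (using $\mathrm{N}_G(B)\le L$), the decomposition $B^L=B^{\ell_0}\times\dots\times B^{\ell_s}$ as a sub-product of the factors of $B^G$, the recovery principle via uniqueness of coordinates in the internal direct product (which correctly pins down $\mathrm{N}_G(B^L)=L$, the genuinely nontrivial containment being $\mathrm{N}_G(B^L)\le L$), and the coset factorization $G=\bigsqcup_{i,k}\mathrm{N}_G(B)\ell_i t_k$ that lets you regroup $B^G=(B^L)^G$ block-by-block into $\prod_k (B^L)^{t_k}$. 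The only implicit hypothesis is $B\ne 1$ in the recovery step (you need a nontrivial element of $B^h$), but this is harmless: if $B=1$ then $\mathrm{N}_G(B)=G$ forces $L=G$ and the statement is trivial. In short, your route supplies the content that the paper outsources, and the recovery principle you isolate is essentially the same mechanism the paper exploits later (e.g., in the proofs of \cref{lemma: every basal contains rist} and \cref{lemma: orbits of elements determine basal}) when it identifies basal subgroups through their normalizers.
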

\begin{proof}
    Property (i) follows directly from the orbit-stabilizer theorem as the index of the normalizer of a subgroup $H\le G$ is the number of distinct conjugates of $H$ in $G$. Property (ii) is precisely \cite[Lemma 10.2.2(d)]{Hardy}.
\end{proof}

A canonical example of basal subgroups are rigid vertex stabilizers of a weakly branch group. In this case, for any $v\in T$, we have
$$\mathrm{N}_G(\mathrm{rist}_G(v))=\mathrm{st}_G(v),$$
which has finite index in $G$, and
$$\mathrm{rist}_G(v)^G=\prod_{w\text{ at level }n}\mathrm{rist}_G(w)=:\mathrm{Rist}_G(n),$$
where $n$ is the level at which $v$ lies.

\subsection{The structure graph of a branch group}

Wilson introduced the structure graph of a  just-infinite branch group $G$ in \cite{WilsonNewHorizons}, as the subgraph of the structure lattice restricted to basal subgroups.  We follow the more general presentation of Garrido in \cite{AlejandraCSP}. Let $G\le\mathrm{Aut}~T$ be a branch group. We write $H\le_{\mathrm{va}}G$ if $H$ contains the commutator subgroup of a finite-index subgroup of $G$. Let $L(G)$ be the collection of all subgroups of $G$ with finitely many conjugates. Note that $L(G)$ is a lattice with respect to subgroup inclusion and with the join and the meet of two subgroups $H,K\in L(G)$ given by $\langle H,K\rangle$ and $H\cap K$ respectively. By \cite[Lemma~2.5]{AlejandraCSP}, for $H, K\in L(G)$, the following are all equivalent:
\begin{enumerate}[\normalfont(i)]
    \item $H\cap K\le_{\mathrm{va}} H,K$;
    \item $\mathrm{C}_G(H)=\mathrm{C}_G(K)$;
    \item there exists $D\in L(G)$ such that $H\times D,K\times D\le_\mathrm{va} G$.
\end{enumerate}

Let $\sim_S $ be the equivalence relation in $L(G)$ given by any of the equivalent conditions (i)-(iii) above. As $\sim_S$ is a congruence, the quotient $L(G)/\sim_S$ admits a lattice structure, with join and meet induced by $\langle H,K\rangle$ and $H\cap K$ respectively. We call the lattice $L(G)/\sim_S$ the \textit{structure lattice} of $G$. 

The equivalence $\sim_S$ induces an equivalence relation in $B(G)$, which we shall denote also by $\sim_S$. Then, the \textit{structure graph} of $G$ is no more than the quotient $B(G)/\sim_S$. Note that the quotient $B(G)/\sim_S$ is a partially ordered set with respect to the partial order induced by inclusion of centralizers. We can endow $B(G)/\sim_S$ with a graph structure by joining $[B]$ and $[\widetilde{B}]$ with an edge if and only if $[B]\le [\widetilde{B}]$ (or $[\widetilde{B}]\le [B]$) and $[B]$ (respectively $[\widetilde{B}]$) is maximal among the equivalence classes strictly smaller than $[\widetilde{B}]$ (respectively $[B]$). Since for every $g\in G$ we have the equality $\mathrm{C}_G(B)^g=\mathrm{C}_G(B^g)$, the group $G$ acts naturally by conjugation on $B(G)/\sim_S$.

\subsection{An alternative definition}

Let  $G\le \mathrm{Aut}~T$ be now a weakly branch group. The equivalence of the properties (i)-(iii) above needs all the proper quotients of $G$ to be virtually abelian. Even if this is always the case for branch groups, it might not be the case for weakly branch groups; the Basilica group is indeed such an example \cite[Proposition 23]{pro-c}. Therefore, we consider a
different approach. We shall define directly a structure graph for $G$ by putting an appropriate equivalence relation $\sim$ directly in $B(G)$. The appropriate equivalence relation $\sim$  should satisfy the following:
\begin{enumerate}[\normalfont(a)]
    \item for any weakly branch action $\rho:G\to \mathrm{Aut}~T$, rigid vertex stabilizers should be distinguished by $\sim$, i.e. $\mathrm{rist}_\rho(v)\sim \mathrm{rist}_\rho(w)$ if and only if $v=w$;
    \item it should be compatible with the action of $G$ by conjugation on $B(G)$, i.e. $B\sim \widetilde{B}$ if and only if $B^g\sim \widetilde{B}^g$ for every $g\in G$;
    \item its equivalence classes should be as large as possible.
\end{enumerate}

Property (a) guarantees that all the weakly branch actions of $G$ are seen in $B(G)/\sim$, while property (b) reduces the study of all weakly branch actions of $G$ to a single action of $G$: its action by conjugation on $B(G)/\sim$. Property (c) guarantees that this action of $G$  on $B(G)/\sim$ by conjugation is as simple as possible. 

We define the equivalence relation $\sim$ in terms of normalizers as
$$B\sim \widetilde{B}\quad\text{if and only if} \quad B\cap \widetilde{B}\ne 1\text{ and }\mathrm{N}_G(B)=\mathrm{N}_G(\widetilde{B})$$
for every $B,\widetilde{B}\in B(G)$. Our motivation is twofold. 

On the one hand, if $G$ is branch, the equivalence relations $\sim$ and $\sim_S$ coincide in $B(G)$; compare \cite[Theorem 15.1.1]{Hardy} and \cref{lemma: description of basal graph with rists} below. Hence, for a branch group $G$, the quotient $\mathcal{B}_G:=B(G)/\sim$ is no more than the structure graph of Wilson. In general, for any weakly branch group $G$, the quotient $\mathcal{B}_G$ is a partially ordered set, where the partial order is induced by inclusion of normalizers of intersecting basal subgroups.  We shall call $\mathcal{B}_G$ the \textit{structure graph} of $G$, as it generalizes the structure graph of a branch group. The structure graph $\mathcal{B}_G$ has a natural graph structure defined in the same way as for branch groups. Note also that for every $g\in G$, we have $\mathrm{N}_G(B)^g=\mathrm{N}_G(B^g)$, so again $G$ acts naturally by conjugation on the structure graph $\mathcal{B}_G$ fulfilling (b).

On the other hand, there is various geometric reasons why we believe the equivalence relation $\sim$ is the best choice to study weakly branch actions (and even branch actions). First, as $G$ is acting on $B(G)$ via conjugation, normalizers of basal subgroups are precisely the stabilizers of this action. Since these stabilizers completely determine a weakly branch action, the equivalence relation $\sim$ is the equivalence relation satisfying (a) and (b) and with the largest possible equivalence classes; thus fulfilling~(c). Secondly, the equivalence relation $\sim$ yields almost directly intuitive ``geometric" tools, such as \cref{lemma: orbits of elements determine basal}, to study rigidity of weakly branch actions.

The structure graph $\mathcal{B}_G$ behaves very similarly for weakly branch groups as it does for branch groups. Therefore, many of its properties can be proved in the same way as for branch groups. However, there are some subtleties. The remainder of the section is devoted to establish these properties. Most of the results and their proofs are just adaptations of the analogous results for branch groups, so we shall skip many of the details and highlight how the equivalence relation $\sim$ replaces $\sim_S$ in the proofs.

\subsection{Properties of the structure graph}

As with branch actions, a weakly branch action $\rho:G\to \mathrm{Aut}~T$ induces a $G$-equivariant embedding of $T$ in $\mathcal{B}_G$ by property (a); see \cite[Proposition 3.1]{AlejandraCSP}:

\begin{lemma}
\label{lemma: actions as embeddings}
    For a weakly branch action $\rho:G\to \mathrm{Aut}~T$, we obtain a $G$-equivariant embedding $\phi_\rho:T\to \mathcal{B}_G$ via $v\mapsto [\mathrm{rist}_\rho(v)]$.
\end{lemma}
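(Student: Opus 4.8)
The plan is to check, in turn, that $\phi_\rho$ is well defined, $G$-equivariant, injective, and order preserving; the injectivity statement is exactly the desideratum~(a) imposed on $\sim$, so it is the heart of the argument, while the rest is essentially formal.

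\emph{Well-definedness and equivariance.} As recorded for rigid vertex stabilizers as canonical basal subgroups, for a weakly branch action each $\mathrm{rist}_\rho(v)$ is a \emph{non-trivial} basal subgroup of $G$, with $\mathrm{N}_G(\mathrm{rist}_\rho(v))=\mathrm{st}_\rho(v)$ and $\mathrm{rist}_\rho(v)^G=\mathrm{Rist}_\rho(n)$, where $n$ is the level of $v$. Non-triviality, i.e. weak branchness, is what makes $[\mathrm{rist}_\rho(v)]$ a genuine $\sim$-class, since $\sim$ demands a non-trivial intersection. Hence $v\mapsto[\mathrm{rist}_\rho(v)]$ is a well-defined map $T\to\mathcal{B}_G$. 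Equivariance is then immediate: for $g\in G$ one has $\mathrm{rist}_\rho(v^g)=\mathrm{rist}_\rho(v)^g$, and as $\sim$ is conjugation-invariant (property~(b)) the class map intertwines, giving $\phi_\rho(v^g)=[\mathrm{rist}_\rho(v)]^g=\phi_\rho(v)^g$.

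\emph{Injectivity.} Suppose $\mathrm{rist}_\rho(v)\sim\mathrm{rist}_\rho(w)$; I would deduce $v=w$ by distinguishing two cases. If $v$ and $w$ are incomparable, then $T_v\cap T_w=\emptyset$, so an element of $G$ lying in both rigid stabilizers has its image fixing every vertex (each vertex lies outside $T_v$ or outside $T_w$), forcing $\mathrm{rist}_\rho(v)\cap\mathrm{rist}_\rho(w)=1$ and contradicting the intersection clause of $\sim$. If instead, say, $w$ is a proper descendant of $v$, that intersection equals $\mathrm{rist}_\rho(w)\neq1$, so I must instead separate the two via normalizers, i.e. show $\mathrm{st}_\rho(w)\subsetneq\mathrm{st}_\rho(v)$. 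The inclusion is clear, since fixing $w$ forces fixing its ancestor $v$. For strictness I invoke level-transitivity of $\rho(G)$: writing $m$ for the level of $w$, any element carrying $w$ to another descendant of $v$ at level $m$ automatically fixes the level-$n$ ancestor $v$, so $\mathrm{st}_\rho(v)$ is transitive on the descendants of $v$ at level $m$; since every vertex of $T$ has at least two children, such a descendant $\neq w$ exists, producing $g\in\mathrm{st}_\rho(v)\setminus\mathrm{st}_\rho(w)$. Hence $\mathrm{N}_G(\mathrm{rist}_\rho(v))\neq\mathrm{N}_G(\mathrm{rist}_\rho(w))$ and $\mathrm{rist}_\rho(v)\not\sim\mathrm{rist}_\rho(w)$. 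I expect this strictness step to be the main obstacle: it is precisely where weak branchness, level-transitivity, and the branching of $T$ genuinely enter.

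\emph{Order embedding.} The same two computations upgrade injectivity to an order embedding, realizing $T$ as a rooted subtree of $\mathcal{B}_G$. Indeed, the non-trivial intersection required for comparability in $\mathcal{B}_G$ forces $v$ and $w$ to be comparable vertices, and among comparable vertices $\mathrm{st}_\rho(w)\subseteq\mathrm{st}_\rho(v)$ holds exactly when $w$ is a descendant of $v$. Therefore $[\mathrm{rist}_\rho(w)]\le[\mathrm{rist}_\rho(v)]$ in $\mathcal{B}_G$ if and only if $w\le v$ in $T$, so $\phi_\rho$ both preserves and reflects order and is the desired $G$-equivariant embedding.
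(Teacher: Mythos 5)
Your proof is correct, and it reaches injectivity by a genuinely different decomposition from the paper's. The paper splits the distinct pairs $v\neq w$ by \emph{level}: at the same level, $\mathrm{rist}_\rho(v)\cap\mathrm{rist}_\rho(w)=1$ kills the intersection clause of $\sim$; at different levels, the normalizers $\mathrm{st}_\rho(v)$ and $\mathrm{st}_\rho(w)$ are distinguished at one stroke by the orbit--stabilizer theorem, since level-transitivity makes their indices equal to the (distinct) numbers of vertices at the two levels. You instead split by \emph{comparability}: incomparable pairs are handled by the same trivial-intersection observation (your bucket is strictly larger, absorbing incomparable pairs at different levels), while for a proper descendant $w$ of $v$ you prove the strict containment $\mathrm{st}_\rho(w)\subsetneq\mathrm{st}_\rho(v)$, with strictness resting on the fact that $\mathrm{st}_\rho(v)$ acts transitively on the descendants of $v$ at the level of $w$ --- this is exactly \cref{lemma: level-transitivity}, which the paper states only later, and your one-line justification of it (an element carrying $w$ to another descendant of $v$ at the same level must fix $v$) is sound. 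The trade-off: the paper's index count is shorter and needs nothing beyond orbit--stabilizer, whereas your containment argument is more local, makes explicit where level-transitivity and the convention that every vertex has at least two children enter (the paper's index argument silently needs the same convention, since otherwise consecutive levels could have equal cardinality and indeed equal rigid stabilizers), and as a bonus upgrades the conclusion to an order embedding of $T$ into $\mathcal{B}_G$, a statement the paper does not prove here but which is consonant with how $\phi_\rho$ is used afterwards. One small caution on that bonus paragraph: comparability in $\mathcal{B}_G$ is phrased on representatives, so your claim that $[\mathrm{rist}_\rho(w)]\le[\mathrm{rist}_\rho(v)]$ forces $v,w$ comparable implicitly checks the intersection clause on the rigid stabilizers themselves rather than on arbitrary representatives of their classes; this is harmless for the lemma as stated, which asks only for an injective $G$-equivariant map, but would need a word if the order statement were to be invoked later.
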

\begin{proof}
    The $G$-equivariance of $\phi_\rho$ follows from
    \begin{align*}
        v^{\rho(g)}\mapsto& [\mathrm{rist}_\rho(v^{\rho(g)})]=[\mathrm{rist}_\rho(v)^g]=[\mathrm{rist}_\rho(v)]^g.
    \end{align*}
    Let us prove the injectivity of $\phi_\rho$. If $v$ and $w$ are distinct vertices at the same level, then $\mathrm{rist}_\rho(v)\cap \mathrm{rist}_\rho(w)=1$, so their rigid vertex stabilizers correspond to different vertices in $\mathcal{B}_G$. If $v$ and $w$ are distinct vertices at different levels, then the normalizers of their rigid stabilizers, namely $\mathrm{st}_\rho(v)$ and $\mathrm{st}_\rho(v)$, have different indices in $G$, so they are distinct. Again, this implies that the rigid stabilizers of $v$ and $w$ correspond to different vertices in $\mathcal{B}_G$.
\end{proof}

\cref{lemma: actions as embeddings} shows that the structure graph $\mathcal{B}_G$ encodes all the possible weakly branch actions of the group $G$. For the remainder of the section, we shall fix a weakly branch action $\rho:G\to \mathrm{Aut}~T$ and identify $G$ with its image $\rho(G)$ to simplify notation. The following observation will be useful:

\begin{lemma}
\label{lemma: rist not virtually abelian}
    Let $G\le \mathrm{Aut}~T$ be a weakly branch group and $v\in T$. Then, for any finite-index subgroup $H\le G$, we have $(\mathrm{rist}_G(v)\cap H)'\ne 1$. 
\end{lemma}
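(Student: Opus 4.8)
The plan is to rephrase the statement as the assertion that $\mathrm{rist}_G(v)\cap H$ is non-abelian for every finite-index subgroup $H\le G$, and then to exhibit two elements of $\mathrm{rist}_G(v)\cap H$ whose commutator is nontrivial. The one ingredient I would establish first is that rigid vertex stabilizers of a weakly branch group are \emph{infinite}: for a vertex $w$ and any level $n$, the stabilizers $\mathrm{rist}_G(x)$ with $x$ a descendant of $w$ at distance $n$ lie in pairwise disjoint subtrees, so they pairwise commute, intersect trivially, and all lie in $\mathrm{rist}_G(w)$; hence $\mathrm{rist}_G(w)$ contains their internal direct product $\prod_x \mathrm{rist}_G(x)$. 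Each factor is nontrivial because $G$ is weakly branch, and since the tree must branch infinitely often (otherwise some deep subtree would be a ray, forcing a trivial rigid stabilizer) the number of factors is unbounded, so $\mathrm{rist}_G(w)$ is infinite.

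With this in hand, the main step is a commutator gadget. Since $\mathrm{rist}_G(v)$ is infinite and $H$ has finite index, $\mathrm{rist}_G(v)\cap H$ is infinite, so it contains some $g\ne 1$. As $g$ fixes everything outside $T_v$, every vertex it moves lies in $T_v$; let $p$ be a vertex of minimal distance from $v$ moved by $g$. By minimality $g$ fixes the parent of $p$, so $q:=p^g$ is a sibling of $p$ with $q\ne p$. Next I would pick $1\ne h\in\mathrm{rist}_G(p)\cap H$, which is possible because $\mathrm{rist}_G(p)$ is infinite and $H$ has finite index. Then
\[
[h,g]=h^{-1}h^{g},
\]
where $h\in\mathrm{rist}_G(p)$ and $h^{g}\in\mathrm{rist}_G(p)^{g}=\mathrm{rist}_G(q)$. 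Since $p$ and $q$ are incomparable, $\mathrm{rist}_G(p)\cap\mathrm{rist}_G(q)=1$, so $h^{-1}h^{g}=1$ would force $h=h^{g}\in\mathrm{rist}_G(p)\cap\mathrm{rist}_G(q)=1$, a contradiction; hence $[h,g]\ne 1$. Finally, $p$ is a descendant of $v$, so $h\in\mathrm{rist}_G(p)\le\mathrm{rist}_G(v)$, while $g\in\mathrm{rist}_G(v)$; as both $g,h$ also lie in $H$, the commutator $[h,g]$ is a nontrivial element of $(\mathrm{rist}_G(v)\cap H)'$, as desired.

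The facts I would invoke as standard are $\mathrm{rist}_G(p)^{g}=\mathrm{rist}_G(p^{g})$, the fact that rigid stabilizers of incomparable vertices commute and intersect trivially, and $\mathrm{rist}_G(p)\le\mathrm{rist}_G(v)$ for $p$ a descendant of $v$. The only real obstacle is the interaction with the finite-index subgroup $H$: one cannot simply take a fixed non-commuting pair and hope it lands in $H$. The way around this is precisely the infinitude of rigid vertex stabilizers, which guarantees that both witnesses $g\in\mathrm{rist}_G(v)\cap H$ and $h\in\mathrm{rist}_G(p)\cap H$ survive intersection with $H$; the non-triviality of $[h,g]$ is then automatic from the disjointness of the subtrees $T_p$ and $T_q$. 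I expect this infinitude of $\mathrm{rist}_G$ (together with the attendant remark that the tree branches infinitely often) to be the part needing the most care to state cleanly, although it is standard for weakly branch groups.
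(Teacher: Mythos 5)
Your proof is correct and takes essentially the same route as the paper: the paper likewise first uses the infinitude of rigid vertex stabilizers to get $\mathrm{rist}_G(w)\cap H\ne 1$ for every $w\in T$, and then invokes (by citation to Francoeur's Lemma~2.17) precisely the commutator argument you spell out, namely taking $g\in\mathrm{rist}_G(v)\cap H$ moving a vertex $p$, taking $1\ne h\in\mathrm{rist}_G(p)\cap H$, and noting $[h,g]=h^{-1}h^{g}\ne 1$ because $\mathrm{rist}_G(p)\cap\mathrm{rist}_G(p^{g})=1$. The only difference is presentational: you write out in full the argument the paper delegates to the cited lemma.
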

\begin{proof}
    The subgroup $\mathrm{rist}_G(v)\cap H$ is of finite index in $\mathrm{rist}_G(v)$ and thus $\mathrm{rist}_G(v)\cap H$ is non-trivial, as $\mathrm{rist}_G(v)$ is infinite. Now, the same argument as in the proof of \cite[Lemma~2.17]{DominikMaximal} shows $(\mathrm{rist}_G(v)\cap H)'\ne 1$, as $\mathrm{rist}_G(w)\cap H\ne 1$ for any $w\in T$. 
\end{proof}

The following lemma can be proved similarly to \cite[Proposition 3.2]{AlejandraCSP}. Note that we do not need the rigid level stabilizers to be of finite index, it suffices to use \cref{lemma: rist not virtually abelian}.

\begin{lemma}
\label{lemma: every basal contains rist}
    Let $G$ be a weakly branch group. Let $1\ne B\in B(G)$ and $v\in T$ a vertex moved by $B$. Then $\mathrm{st}_G(v)\le \mathrm{N}_G(B)$ and $[\mathrm{rist}_G(v)]\le[B]$.
\end{lemma}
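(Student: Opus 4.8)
The plan is to establish the normalizer inclusion $\mathrm{st}_G(v)\le \mathrm{N}_G(B)$ first, and then read off the order relation $[\mathrm{rist}_G(v)]\le[B]$ from it. For the first part, I would exploit that $B$ is basal: writing $B^G=B_0\times B_1\times\dotsb\times B_r$ with $B_0=B$, the group $G$ acts by conjugation on the set of direct factors $\{B_0,\dotsc,B_r\}$, and $\mathrm{N}_G(B)$ is precisely the stabiliser of the factor $B_0$ under this action. A short computation shows that $\mathrm{st}_G(v)$ preserves the subset of factors moving $v$: if $g\in\mathrm{st}_G(v)$ and $B_i$ moves $v$, then for $b\in B_i$ we have $v^{b^g}=(v^b)^g\ne v$, so $B_i^g$ moves $v$ as well. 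Hence it suffices to prove that $B$ is the \emph{unique} direct factor of $B^G$ moving $v$; granting this, the set of factors moving $v$ equals $\{B_0\}$, it is preserved by $\mathrm{st}_G(v)$, and therefore $\mathrm{st}_G(v)\le\mathrm{N}_G(B)$.

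The crux is thus the uniqueness statement, and this is exactly where \cref{lemma: rist not virtually abelian} takes over the role played by finite-indexness of the rigid level stabilizers in the branch case of \cite[Proposition 3.2]{AlejandraCSP}. I would argue by contradiction: suppose two distinct factors $B,B'$ of $B^G$ both move $v$. Since distinct conjugates of a basal subgroup commute and intersect trivially, $[B,B']=1$ and $B'\le\mathrm{C}_G(B)$. The key elementary observation is that a nontrivial element of $\mathrm{rist}_G(v)$ can never centralize a factor moving $v$: if $b\in B$ satisfies $v^b=w\ne v$, then $w$ lies at the same level as $v$, so $T_v\cap T_w=\emptyset$, and for $1\ne x\in\mathrm{rist}_G(v)$ the conjugate $x^b\in\mathrm{rist}_G(w)$ has support disjoint from that of $x$; thus $[x,b]\ne1$, whence $\mathrm{rist}_G(v)\cap\mathrm{C}_G(B)=1$. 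The contradiction is then reached by producing, from the commuting factor $B'$ together with the non-abelianness of $\mathrm{rist}_G(v)\cap H$ for a suitable finite-index subgroup $H$ supplied by \cref{lemma: rist not virtually abelian}, a \emph{nontrivial} element of $\mathrm{rist}_G(v)$ that does centralize $B$. Manufacturing such an element explicitly, rather than merely knowing that the intersection is trivial, is the main obstacle; it is carried out by mirroring the manipulations of \cite[Proposition 3.2]{AlejandraCSP}, with every appeal to $\mathrm{Rist}_G(n)$ having finite index replaced by \cref{lemma: rist not virtually abelian}.

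Finally, for the order relation I would observe that $\mathrm{N}_G(\mathrm{rist}_G(v))=\mathrm{st}_G(v)$. Since $B\sim\widetilde{B}$ forces $\mathrm{N}_G(B)=\mathrm{N}_G(\widetilde{B})$, the assignment $[B]\mapsto\mathrm{N}_G(B)$ is a well-defined map on $\mathcal{B}_G$, and the partial order on $\mathcal{B}_G$ is governed by inclusion of these normalizers. The inclusion $\mathrm{N}_G(\mathrm{rist}_G(v))=\mathrm{st}_G(v)\le\mathrm{N}_G(B)$ established above, together with the fact that $B$ moves $v$ (which places $\mathrm{rist}_G(v)$ below $B$ as representatives), then yields $[\mathrm{rist}_G(v)]\le[B]$, completing the proof.
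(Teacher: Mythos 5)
Your overall architecture is sound, and at bottom it runs on the same engine as the paper's proof. The paper proceeds directly: for $N=\mathrm{Core}_G(\mathrm{N}_G(B))$ (non-trivial and of finite index) and $b\in B$ moving $v$, the commutator identity $[[h^{-1},b],k]=[h,k]$ for $h,k\in\mathrm{rist}_G(v)\cap N$ (valid because $h^b\in\mathrm{rist}_G(v^b)$ commutes with both $h$ and $k$) yields $(\mathrm{rist}_G(v)\cap N)'\le B$; conjugating by $g\in\mathrm{st}_G(v)$ --- the subgroup $\mathrm{rist}_G(v)\cap N$ is $g$-invariant because $N$ is normal --- gives $(\mathrm{rist}_G(v)\cap N)'\le B\cap B^g$, and then \cref{lemma: rist not virtually abelian} together with basality ($B\cap B^g\ne 1$ forces $B=B^g$) gives $\mathrm{st}_G(v)\le \mathrm{N}_G(B)$ in one stroke. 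Your packaging is different --- viewing $\mathrm{N}_G(B)$ as the stabilizer of the factor $B$ in the conjugation action on the direct factors of $B^G$, observing that $\mathrm{st}_G(v)$ preserves the set of factors moving $v$, and proving uniqueness of the moving factor via $\mathrm{rist}_G(v)\cap \mathrm{C}_G(B)=1$ --- and each step you actually spell out is correct. But the ``main obstacle'' you defer to mirroring \cite[Proposition 3.2]{AlejandraCSP} is precisely the commutator identity above, applied to the hypothetical second factor $B'$ (take $N'=\mathrm{Core}_G(\mathrm{N}_G(B'))$, $b'\in B'$ moving $v$, and $h,k\in\mathrm{rist}_G(v)\cap N'$ with $[h,k]\ne 1$, which exists by \cref{lemma: rist not virtually abelian}; then $1\ne[h,k]\in \mathrm{rist}_G(v)\cap B'\le \mathrm{rist}_G(v)\cap\mathrm{C}_G(B)$). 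So your route does not avoid the crux; it invokes the same computation inside a contradiction instead of directly, making it a valid but somewhat longer detour rather than a genuinely different argument.

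The genuine gap is in your final paragraph. By the definition of $\sim$ and the induced order on $\mathcal{B}_G$, the relation $[\mathrm{rist}_G(v)]\le[B]$ requires two things: the normalizer inclusion $\mathrm{N}_G(\mathrm{rist}_G(v))\le \mathrm{N}_G(B)$, which you have, and the non-trivial intersection $\mathrm{rist}_G(v)\cap B\ne 1$, which you do not. The parenthetical ``the fact that $B$ moves $v$ (which places $\mathrm{rist}_G(v)$ below $B$ as representatives)'' is not a proof: a priori $B$ could move $v$ while meeting $\mathrm{rist}_G(v)$ trivially, and ruling this out is exactly as hard as the step you deferred. The repair is to run your deferred computation on $B$ itself, not only on $B'$: with $N$ the normal core of $\mathrm{N}_G(B)$ and $b\in B$ moving $v$, it yields $1\ne(\mathrm{rist}_G(v)\cap N)'\le \mathrm{rist}_G(v)\cap B$, the non-triviality again coming from \cref{lemma: rist not virtually abelian}. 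This is precisely how the paper's proof concludes, so the missing piece is recoverable from machinery you already set up, but as written the order relation is unjustified.
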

\begin{proof}
    Let $N$ denote the normal core of $\mathrm{N}_G(B)$. Then $N$ is non-trivial and of finite index in $G$, so $\mathrm{rist}_G(v)\cap N$ is non-trivial as $\mathrm{rist}_G(v)$ is infinite. Let $h,k\in \mathrm{rist}_G(v)\cap N$ and let $b\in B$ be an element such that $v^b\ne v$. Then 
    $$h^b\in \mathrm{rist}_G(v)^b=\mathrm{rist}_G(v^b)\ne \mathrm{rist}_G(v),$$
    and since rigid vertex stabilizers are basal $h^b$ must commute with $h$ and $k$. Then
    $$[[h^{-1},b],k]=[h,k].$$
    However, since $h,k\in \mathrm{N}_G(B)$ and $b\in B$, we get $[[h^{-1},b],k]\in B$ and thus $[h,k]\in B$. Hence $$(\mathrm{rist}_G(v)\cap N)'\le B.$$ 
    Then, for $g\in \mathrm{st}_G(v)$ we have $(\mathrm{rist}_G(v)\cap N)^g=(\mathrm{rist}_G(v)\cap N)$, and since the commutator subgroup is characteristic, we get
    $$(\mathrm{rist}_G(v)\cap N)'=\big((\mathrm{rist}_G(v)\cap N)'\big)^g\le B^g.$$
    Thus $B\cap B^g\ne 1$ as $(\mathrm{rist}_G(v)\cap N)'\ne 1$ by \cref{lemma: rist not virtually abelian}. Therefore $g\in \mathrm{N}_G(B)$, as~$B$ is basal. Hence $$\mathrm{N}_G(\mathrm{rist}_G(v))=\mathrm{st}_G(v)\le \mathrm{N}_{G}(B).$$
    In particular $[\mathrm{rist}_G(v)]\le[B]$, as $1 \ne (\mathrm{rist}_G(v)\cap N)' \le \mathrm{rist}_G(v) \cap B$.
\end{proof}

\begin{lemma}
\label{lemma: description of basal graph with rists}
    Let $1\ne B\in B(G)$. Then $[B]=[\mathrm{rist}_G(v)^{\mathrm{N}_G(B)}]$,
    for any vertex $v$ moved by $B$.
\end{lemma}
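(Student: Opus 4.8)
The plan is to verify directly the two defining conditions of the equivalence relation $\sim$: that $B$ and $\mathrm{rist}_G(v)^{\mathrm{N}_G(B)}$ have equal normalizers and have non-trivial intersection. Both conditions will fall out of the two preceding lemmas almost immediately, so the real content is simply assembling them correctly. Throughout I would fix a vertex $v$ moved by $B$, set $L:=\mathrm{N}_G(B)$ and $R:=\mathrm{rist}_G(v)$, and recall that $R$ is itself a non-trivial basal subgroup with $\mathrm{N}_G(R)=\mathrm{st}_G(v)$.

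First I would extract from \cref{lemma: every basal contains rist} the two facts I need. Applied to the basal subgroup $B$ and the vertex $v$ (which $B$ moves by hypothesis), that lemma yields $\mathrm{st}_G(v)\le L$, i.e.\ $\mathrm{N}_G(R)\le L$; and its proof in fact establishes $1\ne(\mathrm{rist}_G(v)\cap N)'\le R\cap B$, so in particular $R\cap B\ne 1$. This handles the intersection condition at the level of $R$: since $R\le R^L$, I get $1\ne R\cap B\le R^L\cap B$, whence $R^L\cap B\ne 1$.

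Next I would feed $\mathrm{N}_G(R)\le L$ into \cref{lemma: properties of basal subgroups}(ii): because $L\ge\mathrm{N}_G(R)$, the normal closure $R^L=\mathrm{rist}_G(v)^{\mathrm{N}_G(B)}$ is again basal, and moreover $\mathrm{N}_G(R^L)=L=\mathrm{N}_G(B)$. This simultaneously confirms that $R^L\in B(G)$ (so that the symbol $[\mathrm{rist}_G(v)^{\mathrm{N}_G(B)}]$ genuinely denotes a vertex of $\mathcal{B}_G$) and supplies the equal-normalizers condition. Combining this with $R^L\cap B\ne 1$ from the previous step gives $B\sim R^L$ directly from the definition of $\sim$, that is, $[B]=[\mathrm{rist}_G(v)^{\mathrm{N}_G(B)}]$, as desired.

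There is no serious obstacle here; the argument is a two-line consequence of the earlier lemmas. The only point that deserves care is the implicit verification that $\mathrm{rist}_G(v)^{\mathrm{N}_G(B)}$ lies in $B(G)$ at all, since $\sim$ is only defined on basal subgroups — and this is exactly what the hypothesis $L\ge\mathrm{N}_G(R)$ in \cref{lemma: properties of basal subgroups}(ii) guarantees. I would make sure to state that the choice of $v$ is irrelevant: for any vertex $v'$ moved by $B$ the same computation gives $[B]=[\mathrm{rist}_G(v')^{\mathrm{N}_G(B)}]$, which is consistent precisely because both equal $[B]$.
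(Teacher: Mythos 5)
Your proof is correct and takes essentially the same route as the paper's: both obtain $\mathrm{st}_G(v)\le \mathrm{N}_G(B)$ and the non-trivial intersection $1\ne(\mathrm{rist}_G(v)\cap N)'\le \mathrm{rist}_G(v)\cap B$ from \cref{lemma: every basal contains rist} and its proof, then apply \cref{lemma: properties of basal subgroups}(ii) to see that $\mathrm{rist}_G(v)^{\mathrm{N}_G(B)}$ is basal with normalizer exactly $\mathrm{N}_G(B)$. Your explicit remark that basality of $\mathrm{rist}_G(v)^{\mathrm{N}_G(B)}$ is needed for the class $[\mathrm{rist}_G(v)^{\mathrm{N}_G(B)}]$ to even make sense is a welcome point of care that the paper leaves implicit.
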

\begin{proof}
    By \cref{lemma: every basal contains rist}, we have $\mathrm{st}_G(v)\le \mathrm{N}_G(B)$ for any vertex $v$ moved by $B$. Now, the subgroup $\mathrm{rist}_G(v)^{\mathrm{N}_G(B)}$ is basal with normalizer $\mathrm{N}_G(B)$ by \cref{lemma: properties of basal subgroups}\textcolor{teal}{(ii)}. Furthermore $B\cap \mathrm{rist}_G(v)^{\mathrm{N}_G(B)}\ne 1$ by the proof of \cref{lemma: every basal contains rist}. Therefore $[B]=[\mathrm{rist}_G(v)^{\mathrm{N}_G(B)}]$.
\end{proof}

\section{The first-order theory of weakly branch groups}
\label{section: first order theory}

In this section, we review and adapt several results of Wilson in \cite{Wilson2} to the context of weakly branch groups and use them to prove \cref{Theorem: definability}.

\subsection{Uniqueness of the congruence topology}  First, note that a direct application of \cref{lemma: description of basal graph with rists} reproves a result of Garrido asserting that the congruence topologies arising from different weakly branch actions are all equivalent: 

\begin{lemma}[{see {\cite[Theorem 6.5]{GarridoPhD}}}]
\label{lemma: equivalent topologies}
    Let $G\le \mathrm{Aut}~T$ be a weakly branch group and let $\rho:G\to\mathrm{Aut}~T$ be another weakly branch action of $G$ on $T$. Then for every $n\ge 1$, there exists $k\ge n$ such that
    $$\mathrm{St}_\rho(n)\ge \mathrm{St}_G(k).$$
\end{lemma}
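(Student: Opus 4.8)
The plan is to exploit that $\mathrm{St}_\rho(n)$ is a \emph{finite} intersection of vertex stabilizers, each of which is the normalizer of a non-trivial basal subgroup of $G$, and then to apply \cref{lemma: every basal contains rist} to these basal subgroups relative to the \emph{original} action $G\le \mathrm{Aut}~T$. Concretely, I would first record that
$$\mathrm{St}_\rho(n)=\bigcap_{v\text{ at level }n}\mathrm{st}_\rho(v),$$
the intersection running over the finitely many vertices $v$ at level $n$ of the $\rho$-tree, and recall from Section~2 that $\mathrm{st}_\rho(v)=\mathrm{N}_G(\mathrm{rist}_\rho(v))$.

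Next, I would fix a vertex $v$ at level $n$ and analyze the subgroup $\mathrm{rist}_\rho(v)$ through the original action. This subgroup is a non-trivial basal subgroup of the abstract group $G$: it is a rigid vertex stabilizer of a weakly branch action, hence basal, and it is non-trivial (indeed infinite, as used in \cref{lemma: rist not virtually abelian}). Since the original embedding $G\le \mathrm{Aut}~T$ acts faithfully, this non-trivial subgroup moves some vertex $w$ of $T$ under the original action. I would then apply \cref{lemma: every basal contains rist} to the basal subgroup $B=\mathrm{rist}_\rho(v)$ and the vertex $w$, working in the original action, to obtain
$$\mathrm{st}_G(w)\le \mathrm{N}_G(\mathrm{rist}_\rho(v))=\mathrm{st}_\rho(v).$$
The crucial point is that \cref{lemma: every basal contains rist} is stated for an \emph{arbitrary} basal subgroup of $G$, so it is irrelevant that $\mathrm{rist}_\rho(v)$ arose from the second action; it is simply an element of $B(G)$ whose orbit under the original action we may study.

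Finally, writing $m_v$ for the level of $w$ in the original tree, we get $\mathrm{St}_G(m_v)\le \mathrm{st}_G(w)\le \mathrm{st}_\rho(v)$. Because there are only finitely many vertices at level $n$, I would set $k=\max\bigl(n,\max_{v\text{ at level }n}m_v\bigr)$ and conclude
$$\mathrm{St}_G(k)\le \bigcap_{v\text{ at level }n}\mathrm{st}_\rho(v)=\mathrm{St}_\rho(n),$$
which is the desired inequality. The argument is short, and the only genuine subtlety — the step I expect to require the most care — is the cross-over between the two actions: one must recognize that the object $\mathrm{rist}_\rho(v)$ produced by the $\rho$-action is a legitimate non-trivial basal subgroup of $G$ that can be fed into the structure-graph machinery of the original action, and then use the finiteness of level $n$ to pass from the vertexwise bounds $m_v$ to a single uniform $k\ge n$.
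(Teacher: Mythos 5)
Your proposal is correct and takes essentially the same route as the paper: both arguments hinge on feeding the basal subgroup $\mathrm{rist}_\rho(v)\in B(G)$ into \cref{lemma: every basal contains rist} for the original action to get $\mathrm{st}_G(w)\le \mathrm{N}_G(\mathrm{rist}_\rho(v))=\mathrm{st}_\rho(v)$ at a vertex $w$ moved by it (the paper packages this via \cref{lemma: description of basal graph with rists}). The only cosmetic difference is the finishing step: the paper treats a single vertex $v$ at level $n$ and passes to normal cores, using level-transitivity to write $\mathrm{St}_\rho(n)=\mathrm{Core}_G(\mathrm{st}_\rho(v))\ge \mathrm{Core}_G(\mathrm{st}_G(u))=\mathrm{St}_G(k)$, whereas you intersect over the finitely many level-$n$ vertices and take $k=\max\bigl(n,\max_v m_v\bigr)$.
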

\begin{proof}
    Let  $n\ge 1$ and $v$ be any vertex at the $n$th level of $T$. Then
    $$[\mathrm{rist}_\rho(v)]=[\mathrm{rist}_G(u)^{\mathrm{N}_G(\mathrm{rist}_\rho(v))}],$$
    for some $u$ at some level $k\ge n$ by \cref{lemma: description of basal graph with rists}. Since
    $$[\mathrm{rist}_\rho(v)]=[\mathrm{rist}_G(u)^{\mathrm{N}_G(\mathrm{rist}_\rho(v))}]\ge [\mathrm{rist}_G(u)]$$
    we get 
    $$\mathrm{st}_{\rho}(v)=\mathrm{N}_G(\mathrm{rist}_\rho(v))\ge \mathrm{N}_G(\mathrm{rist}_G(u))=\mathrm{st}_G(u).$$
    Therefore
    \begin{align*}
        \mathrm{St}_{\rho}(n)&=\mathrm{Core}_{G}(\mathrm{st}_{\rho}(v))\ge \mathrm{Core}_{G}(\mathrm{st}_{G}(u))=\mathrm{St}_G(k).\qedhere
    \end{align*}
\end{proof}

\subsection{Previous results of Wilson}

Let $h\in G$. Following \cite{Wilson2}, we define the sets
$$X_h:=\{[h^{-1},h^k]\mid k\in G\}\text{, }\quad Y_h:=\{xyz\mid x,y,z\in X_h\}$$
and 
$$W_h:=\bigcup_{g\in G} \{Y_{h^g}\mid [Y_h,Y_{h^g}]\ne 1\}.$$
Note that
$$(X_h)^g=X_{h^g}\quad \text{and}\quad (Y_h)^g=Y_{h^g},$$
for any $g,h\in G$.

The following properties were proved by Wilson for rigid vertex stabilizers of branch groups in \cite{Wilson2}. However, his proofs only require weak branchness; see \cite[Lemmata 2.3, 3.2, 3.4 and the proof of Proposition 3.3]{Wilson2}. Note that \cite[Lemma~3.1]{Wilson2} may be replaced with Abért's general result for separating actions \cite[Theorem~1.1]{Abert}, which holds for all weakly branch groups. Furthermore, any basal subgroup may be realized as a rigid vertex stabilizer for some weakly branch action arguing as in \cite[Lemma 2.6(b)]{Wilson2}, so we may restate these results of Wilson for any basal subgroup of a weakly branch group:

\begin{proposition}
\label{proposition: properties of Wh}
    Let $G\le \mathrm{Aut}~T$ be a weakly branch group and $B\in B(G)$. Then
    \begin{enumerate}[\normalfont(i)]
        \item we have $\mathrm{C}_G^2(B)=B$;
        \item we have $\mathrm{C}_G^2(W_h)\le B$ for $h\in B$;
        \item there exists $1\ne h\in B$ such that $\mathrm{C}_G^2(W_h)=\bigcup_{g\in G} \{Y_{h^g}\mid g\in \mathrm{N}_G(B)\}$.
    \end{enumerate}
\end{proposition}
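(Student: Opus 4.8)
The plan is to follow Wilson's strategy from \cite{Wilson2}, replacing the branch hypothesis by weak branchness throughout and substituting his \cite[Lemma 3.1]{Wilson2} with Abért's separating-action theorem \cite[Theorem 1.1]{Abert} wherever the triviality of a centralizer is needed. Since any $B\in B(G)$ can be realized as a rigid vertex stabilizer $\mathrm{rist}_G(v)$ for some weakly branch action of $G$ (arguing as in \cite[Lemma 2.6(b)]{Wilson2}), and since $\mathrm{C}_G^2(B)$, the sets $X_h,Y_h,W_h$, and $\mathrm{N}_G(B)$ are all defined purely group-theoretically, I would first pass to such an action and assume $B=\mathrm{rist}_G(v)$.

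For (i), the inclusion $B\le \mathrm{C}_G^2(B)$ is formal. For the reverse inclusion I would compute $\mathrm{C}_G(B)$ directly: an element $c$ centralizing $\mathrm{rist}_G(v)$ cannot move $v$, since $b=b^c\in\mathrm{rist}_G(v)\cap\mathrm{rist}_G(v^c)=1$ would force every $b\in\mathrm{rist}_G(v)$ to be trivial; and its section $c|_v$ must centralize the image of $\mathrm{rist}_G(v)$ in $\mathrm{Aut}~T_v$, which is again weakly branch and hence separating, so $c|_v=1$ by \cite[Theorem 1.1]{Abert}. Thus $\mathrm{C}_G(B)=\{c\in G:c|_v=1\}$, which contains $\mathrm{rist}_G(w)$ for every $w\ne v$ at the level of $v$. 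Running the same argument in reverse, any element of $\mathrm{C}_G^2(B)=\mathrm{C}_G(\mathrm{C}_G(B))$ must centralize each such $\mathrm{rist}_G(w)$, hence act trivially on every $T_w$ with $w\ne v$, and therefore lie in $\mathrm{rist}_G(v)=B$.

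For (ii), I would first note that $X_h\subseteq B$, and hence $Y_h\subseteq B$, for $h\in B$: if $k\in\mathrm{N}_G(B)$ then $h^k\in B$ and $[h^{-1},h^k]\in B$, while if $k\notin\mathrm{N}_G(B)$ then $B$ and $B^k$ commute, so $[h^{-1},h^k]=1$. Applying the same dichotomy to $Y_h\subseteq B$ and $Y_{h^g}\subseteq B^g$ shows that $[Y_h,Y_{h^g}]\ne 1$ forces $B^g=B$, i.e.\ $g\in\mathrm{N}_G(B)$, whence $Y_{h^g}\subseteq B$ as well. Therefore $W_h\subseteq B$, and monotonicity of the centralizer together with (i) yields $\mathrm{C}_G^2(W_h)\le\mathrm{C}_G^2(B)=B$.

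For (iii), the inclusion $W_h\subseteq\bigcup_{g\in\mathrm{N}_G(B)}Y_{h^g}$ already follows from the previous paragraph, so the real content is to exhibit a single $1\ne h\in B$ for which no conjugate $Y_{h^g}$ with $g\in\mathrm{N}_G(B)$ is dropped, i.e.\ for which the commuting condition $[Y_h,Y_{h^g}]\ne 1$ detects membership in $\mathrm{N}_G(B)$ exactly, and then to identify the resulting set with $\mathrm{C}_G^2(W_h)$. I expect this to be the main obstacle: it requires choosing $h$ so that $Y_h$ is non-degenerate inside $B$ (its centralizer in $B$ being too small to swallow a genuine conjugate), which is precisely where the non-virtual-abelianness of rigid vertex stabilizers (\cref{lemma: rist not virtually abelian}) and the separation property from \cite[Theorem 1.1]{Abert} replace the branch-specific argument of \cite[Lemma 3.1]{Wilson2}. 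Once such an $h$ is fixed, the computation of $\mathrm{C}_G^2(W_h)$ proceeds as in the proof of \cite[Proposition 3.3]{Wilson2}.
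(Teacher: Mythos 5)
Your proposal is correct and coincides with the paper's own proof, which likewise obtains the proposition by rerunning Wilson's arguments (\cite[Lemmata 2.3, 3.2, 3.4 and the proof of Proposition 3.3]{Wilson2}) under weak branchness alone, substituting \cite[Lemma 3.1]{Wilson2} by Abért's result on separating actions \cite[Theorem 1.1]{Abert}, and realizing the basal subgroup $B$ as a rigid vertex stabilizer of a weakly branch action as in \cite[Lemma 2.6(b)]{Wilson2} --- indeed your sketches of (i) and (ii) supply more detail than the paper itself, and your deferral of (iii) to Wilson's Proposition~3.3 with those substitutions is exactly what the paper does. The one blemish is your parenthetical claim in (i) that $\varphi_v(\mathrm{rist}_G(v))$ is ``again weakly branch'': its action on $T_v$ need not be level-transitive, but the conclusion $c|_v=1$ still holds by the elementary observation that the rigid stabilizers of all vertices below $v$ are non-trivial with pairwise disjoint supports at each level, so an automorphism of $T_v$ centralizing $\varphi_v(\mathrm{rist}_G(v))$ fixes every vertex of $T_v$.
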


Following Wilson in \cite{Wilson2}, we show that for the values of $h\in B$ in \cref{proposition: properties of Wh}\textcolor{teal}{(iii)}, the subgroup $\mathrm{C}_G^2(W_h)$ is basal. Furthermore, any equivalence class on the structure graph $\mathcal{B}_G$ admits a basal representative of the form $\mathrm{C}_G^2(W_h)$:

\begin{lemma}
\label{lemma: definability of N(B)}
    Let $G\le \mathrm{Aut}~T$ be a weakly branch group and let $B\le G$ be a basal subgroup. Then there exists $1\ne h\in G$ such that $\mathrm{C}_G^2(W_h)$ is basal and $[B]=[\mathrm{C}_G^2(W_h)]$.
\end{lemma}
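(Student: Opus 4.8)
The plan is to combine the two results that immediately precede this lemma. Recall that \cref{lemma: description of basal graph with rists} tells us that every non-trivial basal subgroup is equivalent to a rigid-stabilizer-based basal subgroup, while \cref{proposition: properties of Wh}\textcolor{teal}{(iii)} produces, for a given basal $B$, an element $h\in B$ for which $\mathrm{C}_G^2(W_h)$ has an explicit description. So the strategy is: first fix an appropriate element $h$ using \cref{proposition: properties of Wh}\textcolor{teal}{(iii)}, then show that the resulting $\mathrm{C}_G^2(W_h)$ is itself a basal subgroup lying in the same $\sim$-class as $B$.

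First I would apply \cref{proposition: properties of Wh}\textcolor{teal}{(iii)} to the basal subgroup $B$ to obtain $1\ne h\in B$ with
$$\mathrm{C}_G^2(W_h)=\bigcup_{g\in G}\{Y_{h^g}\mid g\in\mathrm{N}_G(B)\}.$$
Next I would argue that this set is in fact a subgroup and is basal. The key observation, following Wilson, is that the conjugation identities $(X_h)^g=X_{h^g}$ and $(Y_h)^g=Y_{h^g}$ imply that $\mathrm{C}_G^2(W_h)$ is normalized by $\mathrm{N}_G(B)$, and that its conjugates under a transversal of $\mathrm{N}_G(B)$ in $G$ correspond to the distinct conjugates of $B$. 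Since $B$ is basal, these conjugates commute and intersect trivially, so $\mathrm{C}_G^2(W_h)$ inherits the basal structure; I would verify $\mathrm{N}_G(\mathrm{C}_G^2(W_h))=\mathrm{N}_G(B)$ along the way. This is essentially Wilson's argument in \cite{Wilson2}, so I would only indicate which steps carry over and note that the set-up uses only weak branchness.

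It then remains to show $[B]=[\mathrm{C}_G^2(W_h)]$. For this I would check the two defining conditions of $\sim$: that the normalizers coincide, which follows from $\mathrm{N}_G(\mathrm{C}_G^2(W_h))=\mathrm{N}_G(B)$ just established, and that the intersection $B\cap\mathrm{C}_G^2(W_h)$ is non-trivial. The latter is immediate since $1\ne h\in B$ and $h\in Y_h\subseteq\mathrm{C}_G^2(W_h)$ (or, more carefully, since $\mathrm{C}_G^2(W_h)\le B$ by \cref{proposition: properties of Wh}\textcolor{teal}{(ii)} while still containing the non-trivial element constructed from $h$). Combining these two facts yields $B\sim\mathrm{C}_G^2(W_h)$, i.e. $[B]=[\mathrm{C}_G^2(W_h)]$.

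The main obstacle I anticipate is verifying that $\mathrm{C}_G^2(W_h)$ is genuinely basal with the correct normalizer, rather than merely a $\sim_S$-representative as in the branch case. The subtlety is that for weakly branch groups we cannot appeal to the structure-lattice machinery (proper quotients need not be virtually abelian), so I must produce the direct-product decomposition by hand from the explicit union description, using $\mathrm{C}_G^2(B)=B$ from \cref{proposition: properties of Wh}\textcolor{teal}{(i)} to pin down that $\mathrm{C}_G^2(W_h)$ sits inside $B$ and picks up exactly the normalizer $\mathrm{N}_G(B)$. Once basality and the normalizer equality are secured, the equivalence $[B]=[\mathrm{C}_G^2(W_h)]$ is routine.
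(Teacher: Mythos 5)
Your proposal follows the paper's proof essentially step for step: fix $h$ via \cref{proposition: properties of Wh}\textcolor{teal}{(iii)}, use \textcolor{teal}{(ii)} to place $\mathrm{C}_G^2(W_h)$ inside $B$ so that basality of $B$ forces $\mathrm{N}_G(\mathrm{C}_G^2(W_h))\le \mathrm{N}_G(B)$, use the conjugation-invariance of the explicit union in \textcolor{teal}{(iii)} for the reverse inclusion, and conclude $[B]=[\mathrm{C}_G^2(W_h)]$ from equal normalizers and non-trivial intersection. One small slip: $h\in Y_h$ need not hold, since $Y_h$ consists of products of three elements of $X_h=\{[h^{-1},h^k]\mid k\in G\}$, but your parenthetical fallback is exactly right — $\mathrm{C}_G^2(W_h)\le B$ is non-trivial (e.g. $X_h\ne\{1\}$ because a weakly branch group has no non-trivial abelian normal subgroup), which is all the argument requires.
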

\begin{proof}
    Let $h$ be as in \cref{proposition: properties of Wh}\textcolor{teal}{(iii)}. Now consider $g\notin \mathrm{N}_G(B)$. Then by \cref{proposition: properties of Wh}\textcolor{teal}{(ii)} we get
    $$\mathrm{C}_G^2(W_h)^g\le B^g\ne B.$$
    Therefore $\mathrm{C}_G^2(W_h)^g\cap \mathrm{C}_G^2(W_h)=1$, so $\mathrm{N}_G(\mathrm{C}_G^2(W_h))\le \mathrm{N}_G(B)$. By \cref{proposition: properties of Wh}\textcolor{teal}{(iii)}, for any $g\in \mathrm{N}_G(B)$ we get
    $$\mathrm{C}_G^2(W_h)^g=\bigcup_{k\in G} \{Y_{h^k}\mid k\in \mathrm{N}_G(B)\}^g=\bigcup_{k\in G} \{Y_{h^k}\mid k\in \mathrm{N}_G(B)\}=\mathrm{C}_G^2(W_h).$$
    Therefore, as the subgroup $\mathrm{C}_G^2(W_h)$ is basal with normalizer
    \begin{align*}
        \mathrm{N}_G(\mathrm{C}_G^2(W_h))&=\mathrm{N}_G(B)
    \end{align*}
    and $\mathrm{C}_G^2(W_h)\le B$ by \cref{proposition: properties of Wh}\textcolor{teal}{(ii)}, we get $[B]=[\mathrm{C}_G^2(W_h)]$ as wanted.
\end{proof}

\subsection{First-order definability of the congruence topology and the level-stabilizers}

Let us fix a group $G$. A \textit{first-order sentence} in $G$ is a formula built up from the quantifiers $\lnot,\land, \lor,\rightarrow, \exists,\forall$ and equations $f(x_1,\dotsc,x_k)=1$, with variables taking values in $G$. A \textit{first-order definable} (subset) subgroup of $G$ is a (subset) subgroup $H\le G$ given by
$$H=\{g\in G\mid \varphi(g)\},$$
where $\varphi(g)$ is a first-order sentence in $G$. A collection of subgroups $\{H_i\}_{i\in I}$ is said to be \textit{first-order definable in $G$} if the set $I$ is first-order definable in $G$ and for every $i\in I$ the subgroup~$H_i$ is first-order definable in $G$.

Let us show first that normalizers and normal cores of first-order definable subgroups are themselves first-order definable:

\begin{lemma}
\label{lemma: first order logic of normalizers and normal cores}
    Let $G$ be a group and $H=\{g\in G\mid \varphi(g)\}\le G$ a first-order definable subgroup of $G$. Then the normalizer and the normal core of $H$ are both first-order definable in $G$. More precisely
    \begin{align*}
        \mathrm{N}_G(H)&=\{g\in G\mid (\forall y)(\varphi(y)\rightarrow \varphi(y^g))\},\\
        \mathrm{Core}_G(H)&=\{g\in G\mid (\forall y)(\varphi(g^y))\}.
    \end{align*}
\end{lemma}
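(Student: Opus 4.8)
The plan is to verify directly that each displayed formula picks out the claimed subgroup and that both formulas are genuine first-order sentences with the single free variable $g$, which is precisely what first-order definability of the corresponding sets means. Throughout I would use the convention $y^{g}=g^{-1}yg$ fixed in the paper and recall that, for each fixed $g\in G$, the map $y\mapsto y^{g}$ is a bijection of $G$ onto itself.

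I would start with the normal core, which is the cleaner of the two. By definition $\mathrm{Core}_G(H)=\bigcap_{y\in G}H^{y}$, so $g$ lies in the core exactly when $g\in H^{y}$ for every $y\in G$; equivalently, when $ygy^{-1}=g^{y^{-1}}\in H$ for every $y$, which — since $y\mapsto y^{-1}$ permutes $G$ — is the same as requiring $\varphi(g^{y})$ to hold for all $y\in G$. Substituting the group word $g^{y}$ into $\varphi$ and universally quantifying over $y$ yields the first-order formula $(\forall y)\,\varphi(g^{y})$ with free variable $g$, giving both the stated equality and the definability at once.

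For the normalizer I would unwind $g\in \mathrm{N}_G(H)\iff H^{g}=H$ into the two inclusions $H^{g}\subseteq H$ and $H\subseteq H^{g}$. The inclusion $H^{g}\subseteq H$ is exactly captured by $(\forall y)(\varphi(y)\rightarrow\varphi(y^{g}))$. The main obstacle is the reverse inclusion: requiring only that conjugation by $g$ send $H$ into $H$ does not in general force $H^{g}=H$, since for an infinite-index subgroup conjugation can map $H$ properly into itself (for example $\langle a\rangle$ inside $\langle a,t\mid tat^{-1}=a^{2}\rangle$). I would resolve this by also demanding that conjugation by $g$ preserve the complement of $H$: because $y\mapsto y^{g}$ is a bijection, a map carrying $H$ into $H$ and $G\setminus H$ into $G\setminus H$ must carry $H$ onto $H$. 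This is encoded by strengthening the implication to the biconditional $(\forall y)(\varphi(y)\leftrightarrow\varphi(y^{g}))$. In every application appearing in this paper the relevant subgroup $H$ has finite index in $G$, and then the two formulations coincide, since $H^{g}\subseteq H$ together with $[G:H^{g}]=[G:H]<\infty$ already forces $H^{g}=H$.

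Finally, I would observe that $\varphi$ is a first-order formula and that $g^{y}$ and $y^{g}$ are group-word terms in the variables $g$ and $y$; hence the displayed formulas are themselves first-order with free variable $g$. Therefore the sets they define, namely the normal core and the normalizer of $H$, are first-order definable in $G$, as required.
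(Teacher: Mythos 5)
Your treatment of the normal core is exactly the paper's: unwind $\mathrm{Core}_G(H)=\bigcap_{y\in G}H^{y}$ and reindex via $y\mapsto y^{-1}$; nothing to add there. On the normalizer you genuinely diverge, and your caution is warranted: the paper's own proof simply asserts that $(\forall y)(\varphi(y)\rightarrow\varphi(y^{g}))$ ``says $H^g=H$'', whereas literally it defines the transporter $\{g\in G\mid H^{g}\le H\}$, which for an infinite-index subgroup can be strictly larger than $\mathrm{N}_G(H)$ --- your example $\langle a\rangle\le\langle a,t\mid tat^{-1}=a^{2}\rangle$ is the standard witness. Strengthening the implication to the biconditional $(\forall y)(\varphi(y)\leftrightarrow\varphi(y^{g}))$ repairs the statement for arbitrary definable $H$ at no cost in definability, so your version is correct and strictly more robust than the printed formula.

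One slip in your reconciliation, however: it is not true that in every application in this paper the relevant subgroup $H$ has finite index in $G$. In the proof of \cref{Theorem: definability}, the lemma is first applied with $H=\mathrm{C}_G^2(W_h)$, a \emph{basal} subgroup; in a weakly branch group basal subgroups (e.g.\ rigid vertex stabilizers $\mathrm{rist}_G(v)$) typically have infinite index --- only their normalizers are of finite index, by \cref{lemma: properties of basal subgroups}. The correct reason the implication-only formula still suffices in the paper's applications is that basal subgroups have \emph{finitely many conjugates}: if $H^{g}\le H$, then the descending chain $H\ge H^{g}\ge H^{g^{2}}\ge\dotsb$ runs through finitely many conjugates, so $H^{g^{m}}=H^{g^{n}}$ for some $m<n$, the chain is constant, and $H^{g}=H$. (Alternatively, for a nontrivial basal $B$: if $B^{g}\le B$ with $B^{g}\ne B$, the basal dichotomy gives $B\cap B^{g}=1$, whence $B^{g}=1$, a contradiction.) So the printed formula is valid exactly on the class of subgroups with finite-index normalizer --- which covers every use made of it here --- while your biconditional is valid unconditionally; your argument, with this corrected justification for the aside, is a sound and indeed sharper proof of the lemma.
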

\begin{proof}
    The condition $(\forall y)(\varphi(y)\rightarrow \varphi(y^g))$ says that for every element $y\in H$ we have $y^g\in H$, i.e. $H^g=H$. This is precisely the definition of $\mathrm{N}_G(H)$. For the normal core, note that an element $g\in \mathrm{Core}_G(H)$ if and only if $g\in H^y$ for every $y\in G$, i.e. if and only if $g^{y^{-1}}\in H$ for every $y\in G$. This is equivalent to the condition $(\forall y)(\varphi(g^y))$.
\end{proof}

\begin{proof}[Proof of \cref{Theorem: definability}\textcolor{teal}{$\mathrm{(i)}$} and \textcolor{teal}{$\mathrm{(iii)}$}]
By \textcolor{teal}{Lemmata} \ref{lemma: every basal contains rist} and \ref{lemma: equivalent topologies}, the congruence topology in $G$ is equivalent to the topology induced by the larger family of subgroups
$$\mathcal{N}:=\{\mathrm{Core}_G(\mathrm{N}_G(B))\mid B\in B(G)\}.$$
Then, to prove \cref{Theorem: definability}\textcolor{teal}{$\mathrm{(i)}$}, it is enough to show that the family $\mathcal{N}$ is first-order definable. In fact, \cref{Theorem: definability}\textcolor{teal}{$\mathrm{(iii)}$} will also follow from this once we establish \cref{proposition: characterization of rigidity}, as if $G$ is $T$-rigid, then by \cref{proposition: characterization of rigidity}, the family~$\mathcal{N}$ is precisely the family of level-stabilizers of $G$.

It was proved by Wilson in \cite[Theorem 4.1]{Wilson2} that $\mathrm{C}_G^2(W_h)$ is first-order definable for each $h\in G$, i.e.
    $$\mathrm{C}_G^2(W_h)=\{g\in G\mid \gamma_h(g)\}.$$
    Therefore $\mathrm{N}_G(\mathrm{C}_G^2(W_h))=\{g\in G\mid \varphi_h(g)\}$ for 
    \begin{align*}
        \varphi_h(g)&:~(\forall y)(\gamma_h(y)\rightarrow \gamma_h(y^g)),
    \end{align*}
and  $\mathrm{Core}_G(\mathrm{N}_G(\mathrm{C}_G^2(W_h)))=\{g\in G\mid \sigma_h(g)\}$ for
\begin{align*}
    \sigma_h(g)&:~(\forall y)(\varphi_h(g^y)),
\end{align*}
by \cref{lemma: first order logic of normalizers and normal cores}. Wilson further defined in \cite[Theorem 4.1]{Wilson2} the first-order sentence~$\beta(g)$:
$$g\ne 1\land (\forall y)\big(((\exists z_1\exists z_2)\gamma_g(z_1)\land \gamma_{g^y}(z_2)\land [z_1,z_2]\ne 1 )\rightarrow ((\forall z_3)(\gamma_g(z_3)\rightarrow \gamma_{g^y}(z_3)))\big),$$
and showed that $\beta(h)$ holds if and only if $h\ne 1$ and the subgroup $\mathrm{C}_G^2(W_h)$ is basal. By \cref{lemma: definability of N(B)}, for any $B\in B(G)$, there exists some $1\ne h\in G$ such that $\beta(h)$ is true and $\mathrm{N}_G(B)=\mathrm{N}_G(\mathrm{C}_G^2(W_h))$. Therefore
$$\mathcal{N}=\{\mathrm{Core}_G(\mathrm{N}_G(\mathrm{C}_G^2(W_h)))\mid h\in S\},$$
where $S:=\{h\in G\mid \beta(h)\}$ and $$\mathrm{Core}_G(\mathrm{N}_G(\mathrm{C}_G^2(W_h)))=\{g\in G\mid \sigma_h(g)\}$$
for each $h\in S$, proving $\mathcal{N}$ is first-order definable.
\end{proof}

\subsection{Definability of the structure graph}

The definable set $S:=\{h\in G\mid \beta(h)\}$ appearing in the proof above is a union of conjugacy classes, and we may define an action of $G$ on $S$ by conjugation. We also define $\delta(g,h)$ via
$$\delta(g,h):~ (\exists z)(\gamma_g(z)\wedge \gamma_h(z)\wedge z\ne 1)\wedge(\forall y)(\varphi_g(y)\rightarrow \varphi_h(y)).$$
For $g,h\in S$ the statement $\delta(g,h)$ holds if and only if $\mathrm{C}_G^2(W_g)\cap \mathrm{C}_G^2(W_h)\ne 1$ and 
$$\mathrm{N}_G(\mathrm{C}_G^2(W_g))\le \mathrm{N}_G(\mathrm{C}_G^2(W_h)),$$
which yields a partial order in $S$. Therefore $\delta(g,h)\land\delta(h,g)$ yields an equivalence relation $\sim$ in $S$ and $G$ acts on $S/\sim$ by conjugation. By construction, the first-order definable quotient $S/\sim$ is $G$-equivariantly isomorphic (as a partially ordered set) to the structure graph $\mathcal{B}_G$. This proves \cref{Theorem: definability}\textcolor{teal}{(ii)}, and generalizes \cite[Theorem~4.1(d)]{Wilson2} to weakly branch groups.

We remark that the statement $\delta(g,h)$ is a slight variation of the analogous statement in \cite[Theorem~4.1]{Wilson2}. This is due to replacing $\sim_S$ with $\sim$ in the definition of the structure graph.

\section{Rigidity of weakly branch actions: the general case}
\label{section: rigidity}

The goal of this section is to characterize, in terms of the structure graph $\mathcal{B}_G$, when two weakly branch actions on the same spherically homogeneous rooted tree $T$ are conjugate in $\mathrm{Aut}~T$. We also study the connection between the rigidity of a weakly branch group and the rigidity of its congruence completion. We fix the spherically homogeneous rooted tree $T$ for the remainder of the section.

\subsection{$T$-filtrations and weakly branch actions}

A group $G$ is defined to be $T$-\textit{rigid} if it admits a weakly branch action on $T$ and for any pair of weakly branch actions $\rho,\tau:G\to \mathrm{Aut}~T$ the respective images $\rho(G)$ and $\tau(G)$ are conjugate in $\mathrm{Aut}~T$.

Grigorchuk and Wilson introduced in \cite{GrigorchukWilson} two properties, which guarantee rigidity of branch actions of $G$ on a spherically homogeneous rooted tree $T$:
\begin{enumerate}
    \item[$(*)$] for each vertex $v\in T$ the stabilizer $\mathrm{st}_G(v)$ acts as a (transitive) cyclic group of prime order on the immediate descendants of $v$;
    \item[$(**)$] whenever $u$ and $u'$ are incomparable vertices and $v$ is a descendant of (not equal to) $u$, there is an element $g\in G$ fixing $u'$ and moving $v$.
\end{enumerate}

Note that property $(*)$ forces $T$ to be \textit{locally prime}, i.e. every vertex of $T$ has a prime number of immediate descendants, where the primes corresponding to vertices at different levels might be distinct. Garrido slightly generalized property~$(*)$ by asking only for a primitive action of the stabilizer and property $(**)$ by asking the vertex $u$ to also be fixed by $g$; see properties (A) and (B) in \cite[Proposition~7.1.4]{GarridoPhD}. Note however, that there is an issue in the proof of \cite[Proposition~7.1.4]{GarridoPhD} (vertex stabilizers of incomparable vertices can be contained one in the other, indeed they can even be equal), so properties (A) and (B) are sufficient but not necessary conditions for $\mathcal{B}_G$ to be isomorphic to $T$.

Properties $(*)$ and $(**)$ were further extended by Hardy in \cite{Hardy}, where he introduced the following property for a branch action $\rho:G\to \mathrm{Aut}~T$:
\begin{enumerate}
    \item[$(\mathrm{S})$] for each vertex $v\in T$ and each subgroup $L\le G$, the containment $L\ge \mathrm{st}_\rho(v)$ implies $L=\mathrm{st}_\rho(w)$ for some $w$ above $v$.
\end{enumerate}

We shall restate Hardy's property (S) as follows. Let us define the set $\mathcal{L}_\rho$ via
$$\mathcal{L}_{\rho}:=\{L\le G~\mid~L\ge \mathrm{st}_\rho(u)\text{ for some }u\in T\}.$$
We consider the subset $\mathcal{S}_{\rho}\subseteq \mathcal{L}_\rho$ of subgroups consisting of vertex stabilizers (for the branch action $\rho$). Hardy's property (S) is simply the equality $$\mathcal{L}_\rho=\mathcal{S}_{\rho}.$$
In \cite[Theorem 15.4.2]{Hardy}, Hardy characterized branch groups whose structure graph is a tree precisely as those admitting a branch action satisfying property (S).

In this paper, we consider weakly branch actions of a group $G$. Note that both properties $(*)$ and $(**)$ and property (S) may also be considered for weakly branch actions. In fact, by \cref{lemma: actions as embeddings}, property (S) (for weakly branch groups) makes the $G$-equivariant map $\phi_\rho$ surjective, and thus bijective, which yields $T$-rigidity: 

\begin{proposition}
\label{proposition: rigidity Hardy}
    Let $\rho:G\to \mathrm{Aut}~T$ be a weakly branch action. Then $\rho$ has property $\mathrm{(S)}$ if and only if $\phi_\rho:T\to \mathcal{B}_G$ is an isomorphism.
\end{proposition}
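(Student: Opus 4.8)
The plan is to treat $\phi_\rho$ as a $G$-equivariant order-embedding of posets (equivalently graphs), which is already injective by \cref{lemma: actions as embeddings}, and to reduce the statement to the assertion that property $(\mathrm{S})$ holds if and only if $\phi_\rho$ is surjective. Before addressing surjectivity I would record that $\phi_\rho$ preserves and reflects the partial order: if $w$ is above $v$ then $\mathrm{rist}_\rho(v)\le\mathrm{rist}_\rho(w)$ and $\mathrm{st}_\rho(v)\le\mathrm{st}_\rho(w)$, so $\phi_\rho(v)\le\phi_\rho(w)$; conversely $[\mathrm{rist}_\rho(u)]\le[\mathrm{rist}_\rho(w)]$ forces $\mathrm{rist}_\rho(u)\cap\mathrm{rist}_\rho(w)\ne 1$ (hence $u,w$ comparable, since incomparable vertices have disjoint subtrees) together with $\mathrm{st}_\rho(u)\le\mathrm{st}_\rho(w)$; as stabilizers of vertices at distinct levels have distinct indices in $G$, this inclusion is possible only when $w$ lies above $u$. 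Thus a bijective $\phi_\rho$ is automatically an isomorphism, and it remains only to match surjectivity with $(\mathrm{S})$.

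For the implication $(\mathrm{S})\Rightarrow\phi_\rho$ surjective, I would take an arbitrary class $[B]\in\mathcal{B}_G$ with $1\ne B$ basal, choose a vertex $v$ moved by $B$, and set $L:=\mathrm{N}_G(B)$. By \cref{lemma: every basal contains rist} we have $\mathrm{st}_\rho(v)\le L$, so $L\in\mathcal{L}_\rho$; property $(\mathrm{S})$ then yields $L=\mathrm{st}_\rho(w)$ for some $w$ above $v$. Using \cref{lemma: description of basal graph with rists} I write $[B]=[\mathrm{rist}_\rho(v)^{L}]$, and since $w$ is above $v$ the subtree containment gives $\mathrm{rist}_\rho(v)\le\mathrm{rist}_\rho(w)$, while $L=\mathrm{st}_\rho(w)=\mathrm{N}_G(\mathrm{rist}_\rho(w))$ normalizes $\mathrm{rist}_\rho(w)$; hence $\mathrm{rist}_\rho(v)^{L}\le\mathrm{rist}_\rho(w)$. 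As $\mathrm{rist}_\rho(v)^{L}$ is a nontrivial basal subgroup with normalizer $L=\mathrm{N}_G(\mathrm{rist}_\rho(w))$ sitting inside $\mathrm{rist}_\rho(w)$, the two intersect nontrivially and share a normalizer, so $\mathrm{rist}_\rho(v)^{L}\sim\mathrm{rist}_\rho(w)$ and $[B]=[\mathrm{rist}_\rho(w)]=\phi_\rho(w)$.

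For the converse, assume $\phi_\rho$ is surjective and take $L\in\mathcal{L}_\rho$, say $L\ge\mathrm{st}_\rho(u)$. By \cref{lemma: properties of basal subgroups}(ii) the subgroup $B:=\mathrm{rist}_\rho(u)^{L}$ is basal with $\mathrm{N}_G(B)=L$. Surjectivity provides $w$ with $[B]=[\mathrm{rist}_\rho(w)]$, whence $L=\mathrm{N}_G(B)=\mathrm{st}_\rho(w)\in\mathcal{S}_\rho$; moreover $\mathrm{rist}_\rho(u)\le B$ nontrivially gives $\phi_\rho(u)\le\phi_\rho(w)$, so by order-reflection $w$ lies above $u$. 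Together with the automatic inclusion $\mathcal{S}_\rho\subseteq\mathcal{L}_\rho$, this is exactly the equality $\mathcal{L}_\rho=\mathcal{S}_\rho$ defining $(\mathrm{S})$.

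The main obstacle is the identification $[B]=[\mathrm{rist}_\rho(w)]$ in the first implication: the equivalence relation $\sim$ demands both equal normalizers and nontrivial intersection, and the nontrivial intersection is precisely what forces one to locate $w$ \emph{above} $v$, so that the subtree containment $T_v\subseteq T_w$ makes $\mathrm{rist}_\rho(v)^{L}$ land inside $\mathrm{rist}_\rho(w)$. A secondary point requiring care is the order-reflection of $\phi_\rho$, which I would justify for an arbitrary weakly branch action from the fact that vertex stabilizers at distinct levels have distinct indices in $G$ (as already used in the proof of \cref{lemma: actions as embeddings}); this is what upgrades a bijective embedding to a genuine poset-theoretic, and hence graph-theoretic, isomorphism.
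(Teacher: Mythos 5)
Your proof is correct and follows essentially the route the paper intends: the paper states this proposition without a displayed proof, observing only that property $(\mathrm{S})$ together with \cref{lemma: actions as embeddings} makes $\phi_\rho$ surjective and hence bijective, and your two implications implement exactly this sketch via \cref{lemma: every basal contains rist}, \cref{lemma: description of basal graph with rists} and \cref{lemma: properties of basal subgroups}(ii). The extra care you take with order-reflection (that rigid stabilizers of incomparable vertices intersect trivially, and that vertex stabilizers at distinct levels have distinct indices by level-transitivity) is a detail the paper leaves implicit, and it is precisely what upgrades the bijection to a poset, hence graph, isomorphism and what locates $w$ above $u$ in the converse direction.
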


However, a (weakly) branch group may be $T$-rigid even if $\phi_\rho:T\to \mathcal{B}_G$ is not an isomorphism, as we shall see. Therefore, a more subtle property is needed to characterize $T$-rigidity. 

First, note that the definition of the set $\mathcal{L}_\rho$ is actually independent of the weakly branch action $\rho:G\to\mathrm{Aut}~T$. Indeed, by \cref{lemma: every basal contains rist}, if $\tau:G\to\mathrm{Aut}~T$ is another weakly branch action, for any $v\in T$, there exists some $u\in T$ such that
$$[\mathrm{rist}_\tau(v)]\ge[\mathrm{rist}_\rho(u)],$$
and thus
$$\mathrm{st}_\tau(v)=\mathrm{N}_G(\mathrm{rist}_\tau(v))\ge \mathrm{N}_G(\mathrm{rist}_\rho(u))=\mathrm{st}_\rho(u).$$
Therefore, we shall write $\mathcal{L}_G$ in the following to refer to the set $\mathcal{L}_\rho$. In fact, by \cref{lemma: properties of basal subgroups}\textcolor{teal}{(ii)} and \cref{lemma: every basal contains rist}, subgroups in $\mathcal{L}_G$ are precisely the normalizers of basal subgroups of $G$:
$$\mathcal{L}_G=\{\mathrm{N}_G(B)\mid B\in B(G)\}.$$

Now, we introduce the notion of a $T$-filtration:

\begin{definition}[$T$-filtration]
\label{definition: T filtration}
    A collection of subgroups $\{H_k\}_{k\ge 0}$ of $G$  is defined to be a \textit{$T$-filtration} if $H_0=G$ and, for every $k\ge 0$:
\begin{enumerate}[\normalfont(i)]
    \item $H_k\ge H_{k+1}$;
    \item $H_k\in \mathcal{L}_G$;
    \item $|G:H_{k}|=N_k$, where $N_k$ is the number of vertices at the $k$th level of $T$.
\end{enumerate}
\end{definition}

Note that we shall be writing $N_k$ for the number of vertices at the $k$th level of $T$ for the remainder of the paper. $T$-filtrations may be used to define weakly branch actions of $G$ on $T$:

\begin{lemma}
\label{lemma: T filtration define branch actions}
    Let $\{H_k\}_{k\ge 0}$ be a $T$-filtration. Then for every $v\in T$, if $k$ is the level at which $v$ lies, there exist $u_k\in T$ and $g_v\in G$ such that the map $\phi_\tau:T\to \mathcal{B}_G$ 
    $$v\mapsto [\mathrm{rist}_\rho(u_k)^{{H_k}g_v}]=[\mathrm{rist}_\rho(u_k^{\rho(g_v)})^{{H_k}^{g_v}}]$$
    is injective and $G$-equivariant.
\end{lemma}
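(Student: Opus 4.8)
The plan is to exhibit, inside the structure graph $\mathcal{B}_G$, a rooted copy of $T$ assembled from the filtration $\{H_k\}$, and then to pull back its conjugation $G$-action along a rooted-tree isomorphism. The map $\phi_\tau$ will send a level-$k$ vertex $v$ to a suitable conjugate $\beta_k^{g_v}$ of a fixed class $\beta_k\in\mathcal{B}_G$ whose stabiliser is $H_k$.

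\emph{Step 1: a vertex $\beta_k$ of $\mathcal{B}_G$ with stabiliser $H_k$.} For each $k\ge 0$, \cref{definition: T filtration}(ii) gives $H_k\in\mathcal{L}_G$, and since $\mathcal{L}_G=\{L\le G\mid L\ge \mathrm{st}_\rho(u)\text{ for some }u\in T\}$ I may choose $u_k\in T$ with $\mathrm{st}_\rho(u_k)\le H_k$. Put $B_k:=\mathrm{rist}_\rho(u_k)^{H_k}$. By \cref{lemma: properties of basal subgroups}(ii) the subgroup $B_k$ is basal with $\mathrm{N}_G(B_k)=H_k$, and $B_k\ne 1$ because $\mathrm{rist}_\rho(u_k)\ne 1$; set $\beta_k:=[B_k]\in\mathcal{B}_G$. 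The crucial point is that the stabiliser of $\beta_k$ under the conjugation action of $G$ on $\mathcal{B}_G$ is exactly $H_k$: the group $H_k=\mathrm{N}_G(B_k)$ clearly fixes $[B_k]$, while if $g\notin H_k$ then $B_k^g\ne B_k$, so $B_k^g\cap B_k=1$ by the basal dichotomy ($B\ne B^g\iff B\cap B^g=1$), whence $B_k^g\not\sim B_k$ and $g$ moves $\beta_k$. Therefore the $G$-orbit $O_k$ of $\beta_k$ has size $|G:H_k|=N_k$ by \cref{definition: T filtration}(iii).

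\emph{Step 2: the orbits form a rooted tree isomorphic to $T$.} Condition (i) of \cref{definition: T filtration} gives $H_{k+1}\le H_k$, i.e.\ $\mathrm{Stab}_G(\beta_{k+1})\le\mathrm{Stab}_G(\beta_k)$, so $\beta_{k+1}^g\mapsto\beta_k^g$ is a well-defined, surjective, $G$-equivariant parent map $\pi_k:O_{k+1}\to O_k$ with all fibres of size $N_{k+1}/N_k$. Equipping $\bigsqcup_{k\ge 0}O_k$ with these parent maps yields a rooted tree (rooted at $\beta_0$, since $O_0=\{\beta_0\}$ as $H_0=G$) that is spherically homogeneous with the same branching sequence $(N_{k+1}/N_k)_k$ as $T$; hence it is isomorphic to $T$ as a rooted tree, and $G$ acts on it by rooted-tree automorphisms via conjugation, because conjugation preserves each level $O_k$ and commutes with the $\pi_k$. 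The orbits $O_k$ are pairwise disjoint in $\mathcal{B}_G$, having the pairwise distinct sizes $N_k$.

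\emph{Step 3: pulling back the action.} Fix a rooted-tree isomorphism $\phi_\tau:T\to\bigsqcup_k O_k\subseteq\mathcal{B}_G$ and define $\tau:G\to\mathrm{Aut}~T$ by $v^{\tau(g)}:=\phi_\tau^{-1}(\phi_\tau(v)^g)$; this is an action by tree automorphisms (each $g$ acts by a rooted-tree automorphism on $\bigsqcup_k O_k$ and $\phi_\tau$ is a tree isomorphism), and $\phi_\tau$ is tautologically $\tau$-equivariant and injective. For $v$ at level $k$ we have $\phi_\tau(v)\in O_k$, so $\phi_\tau(v)=\beta_k^{g_v}=[B_k]^{g_v}$ for some $g_v\in G$; this $g_v$, together with $u_k$ from Step 1, is the required witness. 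The two displayed expressions in the statement agree because $\mathrm{rist}_\rho(u_k)^{g_v}=\mathrm{rist}_\rho(u_k^{\rho(g_v)})$ and conjugation by $g_v$ is an automorphism, so both equal $[B_k^{g_v}]=\beta_k^{g_v}$. I expect the main obstacle to be Step 1, namely pinning down $\mathrm{Stab}_G(\beta_k)=H_k$ exactly so that $|O_k|=N_k$: this is what forces the branching of $\bigsqcup_k O_k$ to match that of $T$, and the basal dichotomy is precisely the input that makes it work. The remaining bookkeeping (compatibility of the $\pi_k$ and matching of branching sequences) is routine once the orbit sizes are correct.
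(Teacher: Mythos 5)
Your proposal is correct, and its core coincides with the paper's: both arguments hinge on choosing $u_k$ with $\mathrm{st}_\rho(u_k)\le H_k$ and on the fact (\cref{lemma: properties of basal subgroups}(ii) together with the basal dichotomy $B\ne B^g\iff B\cap B^g=1$ for $B\ne 1$) that $\beta_k=[\mathrm{rist}_\rho(u_k)^{H_k}]$ has conjugation stabiliser exactly $H_k$, hence an orbit $O_k$ of size $N_k$. Where you genuinely diverge is in how the map is assembled. The paper transports $w_k\mapsto\beta_k$ along the $\rho$-action, choosing $g_v$ with $w_k^{\rho(g_v)}=v$, and obtains same-level injectivity by counting the $N_k$ right cosets of $H_k$ against the $N_k$ vertices; read literally, that construction carries a wrinkle: the value $\beta_k^{g_v}$ depends on the choice of $g_v$ unless $\mathrm{st}_\rho(w_k)\le H_k$, and equivariance with respect to $\rho$ would force $\mathrm{st}_\rho(w_k)$ to be conjugate to $H_k$ --- precisely what fails in the non-rigid situations (where $\mathcal{F}_T\supsetneq\mathcal{S}_\rho$) that the lemma is designed to handle. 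Your version sidesteps this entirely: you build the target tree intrinsically from the orbits $O_k$, with parent maps $\pi_k$ well defined because $H_{k+1}\le H_k$ and with fibres of size $N_{k+1}/N_k$, identify it with $T$ as spherically homogeneous rooted trees, and pull the conjugation action back, so injectivity and equivariance become tautological and the statement's ``$G$-equivariant'' is correctly pinned down as equivariance for the induced action $\tau$ rather than for $\rho$ --- which is how the lemma is in fact used in the proof of \cref{proposition: characterization of rigidity}. The price is mild: the standard fact that spherically homogeneous rooted trees with equal spherical index sequences are isomorphic, and --- exactly as in the paper's own cross-level injectivity step, which compares indices $N_k\ne N_\ell$ --- the standing assumption that the level sizes of $T$ are pairwise distinct (branching at least two), which is what keeps your orbits $O_k$ pairwise disjoint inside $\mathcal{B}_G$.
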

\begin{proof}
     By definition, for each $k\ge 0$ we have $H_k\in \mathcal{L}_G$. Thus, for every $k\ge 0$, there exists some $u_k\in T$ such that $H_k\ge \mathrm{st}_\rho(u_k)$. The condition on the index of $H_k$ in~$G$ implies that $u_k$ lies at level $k$ or below. Let $w_k$ be the unique vertex at level~$k$ above~$u_k$. Then the assignment
     $$w_k\mapsto [\mathrm{rist}_\rho(u_k)^{H_k}]$$
     is well defined as the image of $w_k$ does not depend on the choice of $u_k$ (any vertex~$u$ such that $H_k\ge \mathrm{st}_\rho(u)$ yields the same equivalence class in $\mathcal{B}_G$ by \cref{lemma: properties of basal subgroups}\textcolor{teal}{(ii)}). Furthermore, this assignment can be extended to a $G$-equivariant map $\phi_\tau:T\to \mathcal{B}_G$. Indeed, if $v\in T$ is any vertex and $k$ is the level of $T$ at which $v$ lies, by the level-transitivity of $\rho(G)$, there exists some $g_v\in G$ such that $w_k^{\rho(g_v)}=v$. Then 
\begin{align*}
    v&=w_k^{\rho(g_v)}\mapsto [\mathrm{rist}_\rho(u_k)^{H_k}]^{g_v}=[\mathrm{rist}_\rho(u_k)^{H_k g_v}]=[\mathrm{rist}_\rho(u_k^{\rho(g_v)})^{{H_k}^{g_v}}]
\end{align*}
is a well-defined $G$-equivariant map $\phi_\tau:T\to \mathcal{B}_G$. If $u$ and $v$ are two vertices at levels $k\ne \ell$ respectively, then $\phi_\tau(u)\ne \phi_\tau(v)$, as the stabilizers of $\phi_\tau(u)$ and $\phi_\tau(v)$, namely $H_k^{g_u}$ and $H_\ell^{g_v}$, have distinct indices in $G$. Finally, if $u$ and $v$ both lie at the same level~$k$, then $\phi_\tau(u)\ne \phi_\tau(v)$ too. Indeed $H_k$ has exactly $N_k$ distinct right cosets by definition. By the level-transitivity of $\rho(G)$, all of them are realized as the image of some vertex at the $k$th level of $T$, and there are exactly $N_k$ vertices at the $k$th level of $T$. Thus $\phi_\tau$ must be injective.
\end{proof}

\subsection{Proofs of the main results}

Let us denote by $\mathcal{F}_T$ the subset of $\mathcal{L}_G$ consisting of those subgroups in $\mathcal{L}_G$ belonging to some $T$-filtration, i.e.
$$\mathcal{F}_T:=\{H\in \mathcal{L}_G\mid \text{there is a }T\text{-filtration }\{H_k\}_{k\ge 0} \text{ and }k\ge 0,\text{ where }H_k=H\}.$$
In particular, we have $\mathcal{F}_T\supseteq \mathcal{S}_{\rho}$ for any weakly branch action $\rho:G\to \mathrm{Aut}~T$. We are now ready to prove \textcolor{teal}{Theorems} \ref{proposition: characterization of rigidity} and \ref{Theorem: T-rigidity in finite quotients}:

\begin{proof}[Proof of \cref{proposition: characterization of rigidity}]
    Note that (ii)$\implies$(iii) is obvious. We prove first (iii)$\implies$(i). If $\mathcal{F}_T=\mathcal{S}_{\rho}$ holds for some weakly branch action $\rho:G\to\mathrm{Aut}~T$, we show that for any other weakly branch action $\tau:G\to \mathrm{Aut}~T$, for each $v\in T$ we must have $$[\mathrm{rist}_\tau(v)]=[\mathrm{rist}_\rho(w)]$$
    for some $w$ at the same level of $T$ as $v$. This yields that there is a unique weakly branch action of $G$ up to conjugation by an automorphism of $T$, and thus $G$ is $T$-rigid. By \cref{lemma: description of basal graph with rists}, we have
    $$[\mathrm{rist}_\tau(v)]=[\mathrm{rist}_\rho(u)^{\mathrm{N}_G(\mathrm{rist}_\tau(v))}]$$
    for some $u\in T$. Therefore, we have 
    $$\mathrm{N}_G(\mathrm{rist}_\tau(v))=\mathrm{st}_\tau(v)\in \mathcal{S}_{\tau}\subseteq \mathcal{F}_T=\mathcal{S}_{\rho}.$$
    Then $\mathrm{st}_\tau(v)=\mathrm{st}_\rho(w)$ for the unique $w$ at the same level of $T$ as $v$ and above $u$. Thus
    $$[\mathrm{rist}_\tau(v)]=[\mathrm{rist}_\rho(u)^{\mathrm{N}_G(\mathrm{rist}_\tau(v))}]=[\mathrm{rist}_\rho(u)^{\mathrm{st}_\tau(v)}]=[\mathrm{rist}_\rho(u)^{\mathrm{st}_\rho(w)}]=[\mathrm{rist}_\rho(w)].$$
    
    For (i)$\implies$(ii), let us assume by contradiction that there exists a weakly branch action $\rho:G\to \mathrm{Aut}~T$ such that  $\mathcal{S}_{\rho}\subsetneq \mathcal{F}_T$. Then, we may define a second weakly branch action $\tau:G\to \mathrm{Aut}~T$, where there is a vertex $v\in T$ such that $\mathrm{st}_\tau(v)\notin \mathcal{S}_\rho$. The existence of such a weakly branch action is guaranteed by \cref{lemma: T filtration define branch actions}. Since~$G$ is $T$-rigid, there exists $f\in \mathrm{Aut}~T$ such that $\rho(G)=\tau(G)^f$. Then
    $$(v^{\tau(g)})^f=(v^f)^{\rho(g)}$$
    for every $g\in G$. Now, there exists $g\in \mathrm{st}_\rho(v^f)$ such that $v^{\tau(g)}\ne v$. Indeed, otherwise 
    $$\mathrm{st}_\tau(v)\ge \mathrm{st}_\rho(v^f).$$
    Since, by level-transitivity, the index of a vertex stabilizer is the number of vertices at the level at which the vertex lies, we would further get the equality 
    $$\mathrm{st}_\tau(v)=\mathrm{st}_\rho(v^f)\in \mathcal{S}_\rho$$
    as $f\in \mathrm{Aut}~T$. However, this contradicts our assumption on $\tau$. Therefore, for such an element $g$, we have  $v^{\tau(g)}\ne v$, and we obtain
    $$(v^{\tau(g)})^f=(v^f)^{\rho(g)}=v^f.$$
    Hence $f\notin \mathrm{Aut}~T$, which yields a contradiction.    
\end{proof}

\begin{proof}[Proof of \cref{Theorem: T-rigidity in finite quotients}]

    Let us consider a weakly branch action $\rho:G\to \mathrm{Aut}~T$. Then, the closure $\overline{\rho(G)}\le \mathrm{Aut}~T$ is also a weakly branch group as $$\mathrm{rist}_H(v)=H\cap\mathrm{rist}(v)\le \overline{H}\cap \mathrm{rist}(v)=\mathrm{rist}_{\overline{H}}(v)$$
    for any $H\le \mathrm{Aut}~T$. Furthermore, by \cref{Theorem: definability}, the closure $\overline{\rho(G)}$ is isomorphic to the completion $\overline{G}$ of $G$ with respect to the congruence topology, which induces a weakly branch action $\overline{\rho}:\overline{G}\to \mathrm{Aut}~T$ via $\overline{G}\mapsto \overline{\rho(G)}$. We show that $\mathcal{F}_T=\mathcal{S}_\rho$ for~$G$ if and only if $\mathcal{F}_T=\mathcal{S}_{\overline{\rho}}$ for $\overline{G}$. Then the result follows from \cref{proposition: characterization of rigidity}.
    
    Let us assume that $G=\rho(G),\overline{G}=\overline{\rho}(\overline{G})\le \mathrm{Aut}~T$ for the remainder of the proof to simplify notation. First, recall the standard property of profinite topologies that the closure map $H\mapsto \overline{H}$ induces an index-preserving one-to-one correspondence between the open subgroups of~$G$ and those of $\overline{G}$. Now, every subgroup in $\mathcal{F}_T$ is open in the congruence topology (as each subgroup in $\mathcal{F}_T$ contains a vertex stabilizer and thus, a level stabilizer), so the result will follow once we establish the equality $\overline{\mathrm{st}_G(v)}=\mathrm{st}_{\overline{G}}(v)$ for each $v\in T$. Indeed, then the closure map $H\mapsto \overline{H}$ induces a bijection between the sets~$\mathcal{F}_T$ corresponding to $G$ and $\overline{G}$ as the closure map is index-preserving, which restricts to a bijection of the corresponding sets $\mathcal{S}_\rho$ and $\mathcal{S}_{\overline{\rho}}$. Hence $\mathcal{F}_T=\mathcal{S}_\rho$ for $G$ if and only if $\mathcal{F}_T=\mathcal{S}_{\overline{\rho}}$ for~$\overline{G}$.
    
    As $\mathrm{st}_G(v)=G\cap \mathrm{st}(v)$ and $\mathrm{Aut}~T$ is residually finite, we have the inclusion
    $$\mathrm{st}_G(v)=G\cap \mathrm{st}(v)\le \overline{G}\cap \mathrm{st}(v)=\mathrm{st}_{\overline{G}}(v),$$
    and thus
    $$\overline{\mathrm{st}_G(v)}\le \mathrm{st}_{\overline{G}}(v)$$
    as $\mathrm{st}_{\overline{G}}(v)$ is closed. The equality of subgroups follows from the equality of indices $$|\overline{G}:\overline{\mathrm{st}_G(v)}|=|G:\mathrm{st}_G(v)|=|\overline{G}:\mathrm{st}_{\overline{G}}(v)|,$$ 
    where the first equality follows from the closure map $H\mapsto \overline{H}$ preserving subgroup indices and the second equality follows from the orbit-stabilizer theorem, as both~$G$ and $\overline{G}$ are level-transitive in $\mathrm{Aut}~T$.
\end{proof}

\section{Rigidity of weakly branch actions: fractal groups of $p$-adic automorphisms}
\label{section: p-adic}

In this section, we generalize property ($**$) of Grigorchuk and Wilson and relate the Hausdorff dimension of a fractal weakly branch group in $W_p$ to the rigidity of its weakly branch actions. We give a characterization of $T_p$-rigidity of fractal weakly branch groups of $p$-adic automorphisms, which is very easy to check. We conclude the section with examples of further applications.

\subsection{A sufficient condition for rigidity}

We first state the following straightforward but very useful lemma, which can be proved as in \cite[Proposition 4.2]{RestrictedSpectra}:

\begin{lemma}
    \label{lemma: level-transitivity}
    A group $G\le \mathrm{Aut}~T$ acts level-transitively on $T$ if and only if for every $v\in T$ the subgroup $\mathrm{st}_G(v)$ acts level-transitively on the subtree $T_v$ rooted at~$v$.
\end{lemma}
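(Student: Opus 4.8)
The plan is to prove both implications of the equivalence. The key intuition is that level-transitivity is a statement about the action reaching every vertex, and inside a subtree $T_v$ the only automorphisms available are those in $\mathrm{st}_G(v)$ acting via their sections. First I would set up the standard observation that for $g \in \mathrm{st}_G(v)$, the restriction of $g$ to $T_v$ is governed by the section $g|_v$, and that a vertex $vu \in T_v$ is reachable from $v$ within $T_v$ precisely when $u$ is reachable from the root of $T_v$ by the sections of elements of $\mathrm{st}_G(v)$.

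For the forward direction, assume $G$ acts level-transitively on $T$. Fix a vertex $v$ at level $n$ and I would show $\mathrm{st}_G(v)$ acts level-transitively on $T_v$. Take two vertices $vu, vw$ in $T_v$ at the same level of $T_v$ (equivalently the same level of $T$). By level-transitivity of $G$ there is $g \in G$ with $(vu)^g = vw$. The subtlety is that such a $g$ need not fix $v$. To fix this I would use level-transitivity of $G$ at level $n$ to find $h \in G$ with $v^g = v^h$ arranged so that one can correct $g$ into an element fixing $v$ and still sending $vu$ to $vw$; more cleanly, I would argue that since $vw$ lies in $T_v$ and $(vu)^g = vw$, the vertex $v^g$ is the level-$n$ ancestor of $vw$, which is $v$ itself, so in fact $g \in \mathrm{st}_G(v)$ automatically. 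This is the crux: any $g$ sending a vertex of $T_v$ to another vertex of $T_v$ at a level below $n$ must fix the common ancestor $v$. Hence $g \in \mathrm{st}_G(v)$ witnesses transitivity on that level of $T_v$.

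For the reverse direction, assume every $\mathrm{st}_G(v)$ acts level-transitively on $T_v$; in particular $\mathrm{st}_G(\text{root}) = G$ acts level-transitively on $T$, but I want level-transitivity derived from the local condition at every vertex rather than assumed only at the root, so I would instead proceed by induction on the level. The base case is level $0$, trivial. For the inductive step, assume $G$ acts transitively on level $n$; to reach level $n+1$, take two vertices $x, y$ at level $n+1$ with ancestors $v, v'$ at level $n$. By the inductive hypothesis there is $g \in G$ with $v^g = v'$, so $x^g$ and $y$ are both immediate descendants of $v'$; now applying the hypothesis that $\mathrm{st}_G(v')$ acts level-transitively on $T_{v'}$ (hence transitively on the immediate descendants of $v'$) gives $h \in \mathrm{st}_G(v')$ with $(x^g)^h = y$, and $gh$ sends $x$ to $y$.

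The main obstacle is the forward direction's claim that an element carrying one vertex of $T_v$ into another vertex of $T_v$ must stabilize $v$. I expect this to follow cleanly from the fact that automorphisms preserve the ancestor relation and the tree is rooted, so the unique level-$n$ ancestor is carried to the unique level-$n$ ancestor; since both endpoints share the ancestor $v$, and $v$ is sent to the ancestor of $vw$, which is $v$, the element fixes $v$. I would state this carefully as a short lemma-free paragraph rather than grinding through section notation. The reference to \cite[Proposition 4.2]{RestrictedSpectra} suggests the argument there is essentially this inductive reachability statement, so I would follow that structure and keep both directions to a few lines each.
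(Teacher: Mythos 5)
Your proof is correct. Since the paper supplies no argument of its own for this lemma --- it is dispatched with the remark that it ``can be proved as in \cite[Proposition 4.2]{RestrictedSpectra}'' --- there is no in-paper proof to diverge from, and your self-contained argument is precisely the standard one such a citation stands in for. The crux you isolated in the forward direction is sound: an automorphism fixing the root preserves levels and the ancestor relation, so if $(vu)^g = vw$ with $v$ at level $n$, then $v^g$ is the level-$n$ ancestor of $(vu)^g = vw$, which is $v$ itself; hence any witness to transitivity between same-level vertices of $T_v$ automatically lies in $\mathrm{st}_G(v)$, and the forward implication is immediate.

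One remark on the converse: as literally stated it is vacuous, since taking $v$ to be the root gives $\mathrm{st}_G(v) = G$ and $T_v = T$, so the hypothesis at the root already \emph{is} the conclusion --- a point you half-note before launching the induction. Your induction is therefore not logically required for the lemma as stated, but it is worth keeping, because it establishes the strictly stronger criterion that transitivity of each $\mathrm{st}_G(v)$ merely on the \emph{immediate descendants} of $v$ implies level-transitivity of $G$. That stronger recursive form is the one actually exploited in the paper (for instance in the proof of \cref{theorem: sufficient condition for rigidity in p-adic}, where one climbs down the tree one level at a time exactly as in your inductive step, upgrading transitivity on immediate descendants to level-transitivity of a subgroup $H$ on a subtree). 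So the induction earns its place; I would just state explicitly that it proves this stronger statement rather than presenting it as an alternative proof of the (trivial) converse.
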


We need the following key lemma, in the spirit of \cite[Lemma 5.1(a)]{WilsonBook}:

\begin{lemma}
\label{lemma: orbits of elements determine basal}
    Let $\rho:G\to \mathrm{Aut}~T$ be a weakly branch action and let $H_1,H_2$ be subgroups of $G$ such that $H_1,H_2\ge \mathrm{st}_\rho(u)$ for some $u\in T$. Then, we have $H_1=H_2$ if and only if the $\rho(H_1)$-orbit of $u$ coincides with the $\rho(H_2)$-orbit of $u$.
\end{lemma}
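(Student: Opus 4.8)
The plan is to prove the statement by characterizing the subgroups $H_1, H_2 \ge \mathrm{st}_\rho(u)$ in terms of how they act on the vertex $u$, exploiting the correspondence between subgroups containing a vertex stabilizer and the orbit of that vertex. The forward direction ($H_1 = H_2 \implies$ equal orbits) is trivial, so the content is the converse. First I would set $v := u$ and work inside $G$, identifying $G$ with $\rho(G)$ as the paper does after \cref{lemma: actions as embeddings}. The key observation is that since $H_1 \ge \mathrm{st}_\rho(u)$, the orbit $u^{H_1}$ is a union of $\mathrm{st}_\rho(u)$-cosets, and by the orbit-stabilizer theorem the stabilizer of $u$ in $H_1$ is exactly $\mathrm{st}_\rho(u)$ (because $\mathrm{st}_{H_1}(u) = H_1 \cap \mathrm{st}_\rho(u) = \mathrm{st}_\rho(u)$ using the containment). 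Thus $H_1$ is determined by the coset data, and $|H_1 : \mathrm{st}_\rho(u)| = |u^{H_1}|$.

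The heart of the argument is to recover $H_1$ from the orbit $u^{H_1}$ together with the common subgroup $\mathrm{st}_\rho(u)$. Concretely, I would show
\begin{align*}
    H_1 = \{ g \in G \mid u^g \in u^{H_1} \}.
\end{align*}
The inclusion $\subseteq$ is immediate. For $\supseteq$, suppose $u^g \in u^{H_1}$, so $u^g = u^{h}$ for some $h \in H_1$; then $gh^{-1} \in \mathrm{st}_\rho(u) \le H_1$, whence $g \in H_1$. This exhibits $H_1$ as the full preimage in $G$ of the orbit $u^{H_1}$ under the orbit map $g \mapsto u^g$, and the same description holds for $H_2$. Consequently, if $u^{H_1} = u^{H_2}$, the two preimages coincide and $H_1 = H_2$. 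This is the entire argument, and it is genuinely elementary once the containment $H_i \ge \mathrm{st}_\rho(u)$ is used to pin down the point stabilizer.

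The main (and only real) subtlety to be careful about is ensuring that $\mathrm{st}_{H_i}(u) = \mathrm{st}_\rho(u)$ rather than something smaller: this requires precisely the hypothesis $H_i \ge \mathrm{st}_\rho(u)$, since in general $\mathrm{st}_{H_i}(u) = H_i \cap \mathrm{st}_\rho(u)$, and the containment forces this intersection to be all of $\mathrm{st}_\rho(u)$. Without this hypothesis the lemma would fail, so I would emphasize that the common lower bound $\mathrm{st}_\rho(u)$ is exactly what makes the orbit a complete invariant of the subgroup. I do not expect any serious obstacle here; the statement is a clean set-theoretic bijection between subgroups of $G$ containing $\mathrm{st}_\rho(u)$ and the subsets of the vertex set of the form $u^H$, and the lemma is the injectivity half of that correspondence. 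The role this lemma plays downstream — presumably in generalizing property $(**)$ and detecting when distinct $T$-filtrations give genuinely different actions — is what makes it worth isolating, but the proof itself is short.
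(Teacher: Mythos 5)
Your proof is correct, but it takes a genuinely different route from the paper. Your argument is the classical permutation-theoretic one: since $H_i\ge \mathrm{st}_\rho(u)$, each $H_i$ is the full preimage of its orbit under the orbit map, $H_i=\{g\in G\mid u^{\rho(g)}\in u^{\rho(H_i)}\}$ (the verification $u^{\rho(g)}=u^{\rho(h)}\Rightarrow gh^{-1}\in\mathrm{st}_\rho(u)\le H_i$ is exactly as you give it), so equal orbits force $H_1=H_2$. Notably, this uses no weak branchness whatsoever — it is valid for any group action on any set, with the hypothesis $H_i\ge\mathrm{st}_\rho(u)$ doing all the work, just as you emphasize. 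The paper instead stays inside its basal-subgroup formalism: by \cref{lemma: properties of basal subgroups}\textcolor{teal}{(ii)}, the subgroups $B_i:=\mathrm{rist}_\rho(u)^{H_i}$ are basal with $\mathrm{N}_G(B_i)=H_i$ (here weak branchness is genuinely used, via $\mathrm{N}_G(\mathrm{rist}_\rho(u))=\mathrm{st}_\rho(u)$ and the non-triviality of rigid stabilizers), equality of orbits gives $B_1=B_2$ since $B_i$ is the product of the rigid stabilizers of the vertices in $u^{\rho(H_i)}$, and then $H_1=\mathrm{N}_G(B_1)=\mathrm{N}_G(B_2)=H_2$. What each approach buys: yours is shorter, self-contained and strictly more general, isolating the purely set-theoretic content of the lemma; the paper's version deliberately exhibits each $H_i$ as the normalizer of a basal subgroup, which is the viewpoint reused downstream (it is how the lemma interfaces with $\mathcal{L}_G=\{\mathrm{N}_G(B)\mid B\in B(G)\}$ and with the equivalence relation $\sim$ defining the structure graph), and it illustrates the author's stated point that $\sim$ yields such geometric tools ``almost directly''. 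Either proof is acceptable; yours would even slightly strengthen the statement by dropping the weakly branch hypothesis.
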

\begin{proof}
    It follows from \cref{lemma: properties of basal subgroups}. Indeed, by \cref{lemma: properties of basal subgroups}, both
    $$B_1:=\mathrm{rist}_\rho(u)^{H_1}\quad \text{and}\quad B_2:=\mathrm{rist}_\rho(u)^{H_2}$$
    are basal subgroups with normalizer equal to $H_1$ and $H_2$ respectively. Then, if the $\rho(H_1)$-orbit of $u$ coincides with the $\rho(H_2)$-orbit of $u$, we get $B_1=B_2$, and hence
    $$H_1=\mathrm{N}_G(B_1)=\mathrm{N}_G(B_2)=H_2.$$
    The converse is trivial.
\end{proof}

In the following proposition, for a vertex $v_k\in T$, we write $p_k$ for the (prime) number of immediate descendants of $v_k$ in $T$.

\begin{proposition}
\label{theorem: sufficient condition for rigidity in p-adic}
    Let $G\le \mathrm{Aut}~T$ be a weakly branch group satisfying $(*)$ and the following property:
    \begin{enumerate}
        \item[$(\mathrm{N})$] For every triple of vertices $v_1,v_2,v_3\in T$, where $v_{k+1}$ is an immediate descendant of $v_k$ for $1\le k\le 2$, we have either $p_1\ne p_2$ or
    $$|\mathrm{st}_G(v_2):\mathrm{Core}_{\mathrm{st}_G(v_1)}(\mathrm{st}_G(v_3))|>p_1=p_2,$$
    with $p_1$ and $p_2$ being the (prime) number of immediate descendants of $v_1$ and $v_2$ respectively.
    \end{enumerate}
     Then $G$ has Hardy's property $\mathrm{(S)}$ and, in particular $G$ is $T$-rigid.
\end{proposition}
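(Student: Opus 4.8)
The plan is to verify Hardy's property $(\mathrm{S})$; once this is established, \cref{proposition: rigidity Hardy} makes $\phi_\rho$ an isomorphism and hence forces $T$-rigidity. By \cref{lemma: orbits of elements determine basal}, a subgroup $L\ge\mathrm{st}_G(u)$ is completely determined by the orbit $u^{L}$, and it is a vertex stabilizer exactly when $u^{L}$ is the set of vertices at the level of $u$ lying below a single ancestor of $u$. Thus $(\mathrm{S})$ is equivalent to the assertion that the only blocks of imprimitivity of the (level-transitive) action of $G$ on a level of $T$ are the ``tree blocks''. I would first record that $(*)$ and $(\mathrm{N})$ are inherited by every $\mathrm{st}_G(c)$ acting on the subtree $T_c$: the local actions and the relevant cores are unchanged, and level-transitivity of $\mathrm{st}_G(c)$ on $T_c$ is \cref{lemma: level-transitivity}. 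This is what permits an inductive argument on the level of the vertex.

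First I would run an induction on the level $n$ of a vertex $v$ with $L\ge\mathrm{st}_G(v)$. Writing $c_1$ for the level-$1$ ancestor of $v$, property $(*)$ makes the image of $L$ in the action on the children of the root a subgroup of a cyclic group of prime order, hence either trivial or transitive. If it is trivial then $L\le\mathrm{st}_G(c_1)$, and applying the inductive hypothesis to $L$ inside $T_{c_1}$ (where $v$ sits one level higher) identifies $L$ with a vertex stabilizer. If it is transitive, the goal becomes $L=G$. Here the kernel $L_0:=L\cap\mathrm{st}_G(c_1)$ of the action on the children of the root is, by regularity of the prime action in $(*)$, exactly the stabilizer in $L$ of any single child, so $L_0\trianglelefteq L$ and $L/L_0$ is cyclic of order $p_1$; the induction applied to $L_0$ in $T_{c_1}$ gives $L_0=\mathrm{st}_G(w)$ for an ancestor $w$ of $v$ with $c_1\preceq w$, and it remains to prove $w=c_1$, for then $v^{L}$ is the full level and $L=G$.

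The crucial step, and the place where $(\mathrm{N})$ enters, is the exclusion of $w\ne c_1$. In that case $w$ lies strictly below $c_1$, hence $w$ is a descendant of the level-$2$ ancestor $c_2$ of $v$. Since $L_0=\mathrm{st}_G(w)$ is normal in $L$ and $L$ is transitive on the children of the root, conjugating by a child-cycling element $t\in L$ yields $\mathrm{st}_G(w)=\mathrm{st}_G(w^{t^i})$ for all $i$; thus $\mathrm{st}_G(w)$ fixes one vertex under each child of the root, and regularity of $(*)$ then forces it to fix \emph{every} grandchild of the root, i.e. $\mathrm{st}_G(w)\le\mathrm{St}_G(2)$. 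In the sharpest situation this collapses $\mathrm{st}_G(w)$ onto $\mathrm{St}_G(2)=\mathrm{Core}_G(\mathrm{st}_G(c_2))$ and makes the index $|\mathrm{st}_G(c_1):\mathrm{Core}_G(\mathrm{st}_G(c_2))|$ equal to $p_1=p_2$, in direct contradiction with the strict inequality demanded by $(\mathrm{N})$ at the triple $(\mathrm{root},c_1,c_2)$. Phrased invariantly, the obstruction $w\ne c_1$ produces two non-sibling vertices with equal stabilizers, and $(\mathrm{N})$ is precisely the hypothesis preventing the action on the grandchildren of a vertex from degenerating in this way (the failure mode being exactly the elementary-abelian quotient $C_p\times C_p$).

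I expect the main difficulty to be organizing the reduction so that, for a \emph{deep} vertex $v$, this degeneracy is always detected at three consecutive levels where $(\mathrm{N})$ can be invoked: one must locate the meet of the offending pair of vertices, pass to the first level at which their paths separate, and then keep careful track of the two regimes $p_1=p_2$ and $p_1\ne p_2$ that separate the hypotheses in $(\mathrm{N})$. Handling this uniformly, rather than just the isolated level-$2$ computation sketched above, is where the real work lies; but the underlying mechanism stays the same throughout: $(*)$ guarantees regular prime local actions, so that stabilizers of siblings coincide and can only merge ``vertically'' along the tree, while $(\mathrm{N})$ rules out any ``horizontal'' merging across distinct subtrees, leaving only the tree blocks and thereby establishing $(\mathrm{S})$.
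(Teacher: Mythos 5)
Your setup is the same as the paper's (reduce to Hardy's property $(\mathrm{S})$ via \cref{proposition: rigidity Hardy}, use \cref{lemma: orbits of elements determine basal} to identify subgroups by orbits and \cref{lemma: level-transitivity} to descend, derive the dichotomy from $(*)$, and feed $(\mathrm{N})$ in through the normality reformulation of \cref{remark: normality}), and your preliminary reductions are sound: the hypotheses do pass to $\varphi_c(\mathrm{st}_G(c))$ acting on $T_c$, since $\ker\varphi_c$ is contained in every vertex stabilizer of a vertex of $T_c$, so indices of the relevant cores are preserved. But the heart of the proof is missing. You only verify the exclusion of $w\ne c_1$ when $w$ sits exactly at level $2$ \emph{and} $p_1=p_2$, and you explicitly defer the deep-$w$ case and the mixed-prime bookkeeping as ``where the real work lies''; that deferred part is the actual content of the proposition, and it is not a routine elaboration of your level-$2$ computation. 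Worse, as you have set things up the induction cannot close: your transitive case asserts ``$L$ transitive on the children of the root $\Rightarrow L=G$'' for \emph{arbitrary} $L\ge\mathrm{st}_G(v)$, and when the consecutive primes differ, $(\mathrm{N})$ is vacuous and imposes nothing against the degenerate configuration $\mathrm{st}_G(c_2)=\mathrm{St}_G(2)$ (e.g. $G/\mathrm{St}_G(2)$ cyclic of order $p_1p_2$ acting regularly on the second level, with $G$ fattened below level $2$ so as to be weakly branch with $(*)$); there the preimage $L$ of the order-$p_1$ subgroup contains the vertex stabilizer $\mathrm{St}_G(2)=\mathrm{st}_G(c_2)$, is transitive on the first level, and is proper of index $p_2$, so no contradiction of your type is available. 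Such an $L$ has index $p_2\notin\{N_k\}$, so it never occurs as a step of a $T$-filtration (\cref{definition: T filtration}); this is precisely why the paper's proof confines the key step to subgroups $H<\mathrm{st}_G(v_1)$ of index \emph{equal to the branching prime $p_1$ at $v_1$} and containing a deep vertex stabilizer, which is all that is needed to conclude via induction and \cref{proposition: characterization of rigidity}, whereas your induction quantifies over all of $\mathcal{L}_G$ and runs straight into this configuration.

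The paper's mechanism in the transitive case is also different from yours, and it is worth seeing why it survives uniformly at all depths. There, if such an $H$ moves a child of $v_1$, then $(*)$ makes it transitive on the children, and an index count gives that $\mathrm{st}_H(v_2)=H\cap\mathrm{st}_G(v_2)$ has index exactly $p_1$ in $\mathrm{st}_G(v_2)$. If $p_1\ne p_2$, then $\mathrm{st}_H(v_2)$ cannot fix the children of $v_2$ for divisibility reasons (it would lie in a subgroup of index $p_2$); if $p_1=p_2$, triviality of its action would force $\mathrm{st}_H(v_2)$ to coincide with the kernel of $\mathrm{st}_G(v_2)$ on the children of $v_2$, which is then normal in $\langle H,\mathrm{st}_G(v_2)\rangle=\mathrm{st}_G(v_1)$, i.e. $\mathrm{st}_G(v_3)\trianglelefteq\mathrm{st}_G(v_1)$, contradicting $(\mathrm{N})$ by \cref{remark: normality}. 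Hence transitivity propagates one level down by $(*)$, and iterating gives level-transitivity of $H$ on $T_{v_1}$, whence $H=\mathrm{st}_G(v_1)$ by \cref{lemma: orbits of elements determine basal} --- a contradiction with $|\mathrm{st}_G(v_1):H|=p_1$. Note that in this descent the alternative $p_1\ne p_2$ of $(\mathrm{N})$ is the \emph{easy} case, used positively, whereas in your formulation it is the case in which your intended contradiction evaporates. Your ``$\mathrm{st}_G(w)\le\mathrm{St}_G(2)$'' conjugation trick is essentially the paper's equal-prime subcase localized at the top of the tree; the machinery you would need to handle a deep $w$ (your ``locate the meet'' step) is exactly this level-by-level descent, and without it the proposal is a correct frame around an unproven core.
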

\begin{proof}
    First note that it is enough to prove that if $\mathrm{st}_G(w_n)\le H< \mathrm{st}_G(v_1)$ and $H$ is of index $p_1$ in $\mathrm{st}_G(v_1)$, then $H$ is a vertex stabilizer. Then the result follows by induction on the index of $H\in \mathcal{F}_T$ in $G$. 
    
    Assume first that $H$ fixes all the immediate descendants of $v_1$. Then $H=\mathrm{st}_G(v_2)$ for an immediate descendant $v_2$ of $v_1$ as $H$ has index $p_1$ in $\mathrm{st}_G(v_1)$. Let us assume now that $H$ moves some immediate descendant $v_2$ of $v_1$. Since $G$ has property ($*$), this implies that $H$ acts transitively on the immediate descendants of $v_1$. Let us consider $\mathrm{st}_H(v_2)=H\cap \mathrm{st}_G(v_2)$. Then $\mathrm{st}_H(v_2)$ has index dividing $p_1$ in $\mathrm{st}_G(v_2)$, i.e. it is either 1 or $p_1$. If the index is 1 then $H\ge \mathrm{st}_G(v_2)$ and $H$ acts level-transitively on the subtree rooted at~$v_1$ by \cref{lemma: level-transitivity}; hence $H=\mathrm{st}_G(v_1)$ by \cref{lemma: orbits of elements determine basal}, which is a contradiction. Thus, let us assume the index is $p_1$. Then, as by assumption either $p_1\ne p_2$ or
    $$|\mathrm{st}_G(v_2):\mathrm{Core}_{\mathrm{st}_G(v_1)}(\mathrm{st}_G(v_3))|>p_1=p_2,$$
    the subgroup $\mathrm{st}_H(v_2)$ acts non-trivially on the immediate descendants of $v_2$, and thus, transitively as $G$ has property ($*$). Iterating this argument shows that $H$ must act level-transitively on the subtree rooted at $v_1$. Hence, by \cref{lemma: orbits of elements determine basal}, we obtain the equality $H=\mathrm{st}_G(v_1)$, which yields again a contradiction.
\end{proof}

An immediate corollary to \cref{theorem: sufficient condition for rigidity in p-adic} is that if the primes $\{p_k\}_{k\ge 0}$ are all distinct, we obtain rigidity:

\begin{corollary}
    Let $\{p_k\}_{k\ge 0}$ be a collection of distinct primes. Let $T$ be a spherically homogeneous rooted tree such that for each $k\ge 0$, the vertices at level $k$ have~$p_k$ immediate descendants. Then, any weakly branch group $G\le \mathrm{Aut}~T$ satisfying $(*)$ is $T$-rigid.
\end{corollary}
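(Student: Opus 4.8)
The plan is to deduce the corollary directly from \cref{theorem: sufficient condition for rigidity in p-adic} by checking that, when the primes $\{p_k\}_{k\ge 0}$ are pairwise distinct, property $(\mathrm{N})$ holds automatically. Since $G$ already satisfies $(*)$ by hypothesis, it then suffices to verify $(\mathrm{N})$ in order to invoke that proposition.

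First I would take an arbitrary triple of vertices $v_1,v_2,v_3\in T$ with $v_{k+1}$ an immediate descendant of $v_k$ for $1\le k\le 2$. If $v_1$ lies at level $k$ of $T$, then $v_2$ lies at level $k+1$, and by the defining property of $T$ the numbers of immediate descendants of $v_1$ and $v_2$ are $p_k$ and $p_{k+1}$ respectively; these are precisely the quantities denoted $p_1$ and $p_2$ in the statement of $(\mathrm{N})$.

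Next I would observe that, as $\{p_k\}_{k\ge 0}$ consists of distinct primes, we have $p_k\ne p_{k+1}$, that is $p_1\ne p_2$. Thus the first alternative in the disjunction defining $(\mathrm{N})$ is satisfied for every such triple, so $(\mathrm{N})$ holds; the index condition appearing in its second alternative never needs to be checked. Applying \cref{theorem: sufficient condition for rigidity in p-adic} then yields that $G$ has Hardy's property $(\mathrm{S})$, and in particular that $G$ is $T$-rigid.

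There is no real obstacle in this argument, as it is an immediate specialization. The only point deserving attention is the notational translation between the global, level-indexed primes $p_k$ of the corollary and the local labels $p_1,p_2$ attached to a concrete pair of consecutive vertices in $(\mathrm{N})$; once this identification is made, the distinctness hypothesis forces the first clause of $(\mathrm{N})$ to hold trivially for every admissible triple.
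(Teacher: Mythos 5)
Your proof is correct and is exactly the argument the paper intends: the corollary is stated as an immediate consequence of \cref{theorem: sufficient condition for rigidity in p-adic}, since distinctness of the level primes makes the first clause $p_1\ne p_2$ of property $(\mathrm{N})$ hold for every admissible triple. Your careful identification of the level-indexed primes with the local labels in $(\mathrm{N})$ is precisely the (only) verification needed.
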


\begin{remark}
\label{remark: normality}
    The following was pointed out to the author by Fernández-Alcober. The condition 
    $$|\mathrm{st}_G(v_2):\mathrm{Core}_{\mathrm{st}_G(v_1)}(\mathrm{st}_G(v_3))|>p_2,$$
    is in fact equivalent to $\mathrm{st}_G(v_3)$ not being normal in $\mathrm{st}_G(v_1)$. Indeed, the subgroup $\mathrm{st}_G(v_3)$ is normal in $\mathrm{st}_G(v_1)$ if and only if it is equal to its normal core in $\mathrm{st}_G(v_1)$. Then, the equivalence follows from $$|\mathrm{st}_G(v_2):\mathrm{st}_G(v_3)|=p_2.$$
    This motivates the name (N) for the condition in \cref{theorem: sufficient condition for rigidity in p-adic}.
\end{remark}

    Property (N) generalizes property $(**)$ of Grigorchuk and Wilson. Indeed, let us consider any $v_1\in T$, any two distinct immediate descendants of $v_1$, namely $v_2$ and~$v_2'$, and two immediate descendants $v_3$ and $v_3'$ of $v_2$ and $v_2'$ respectively. Then, by property $(**)$, there exists $g\in \mathrm{st}_G(v_3)$ such that $(v_3')^g\ne v_3'$. Thus
$$|\mathrm{st}_G(v_2):\mathrm{Core}_{\mathrm{st}_G(v_1)}(\mathrm{st}_G(v_3))|\ge p_2^2,$$
and $G$ satisfies the second assumption in property (N). 

In the remainder of the section, we specialize \cref{theorem: sufficient condition for rigidity in p-adic} to fractal groups of $p$-adic automorphisms, which leads to the characterization in \cref{proposition: characterization of rigidity}; but first, let us introduce fractal groups and certain related notions.

\subsection{Self-similarity and fractality}
Let $\sigma:=(1\, \dotsb \, p)\in \mathrm{Sym}(p)$ be a $p$-cycle for some prime~$p$. Recall that we identify a regular rooted tree with the corresponding free monoid. We define the group of \textit{$p$-adic automorphisms} $W_p$ as
$$W_p:=\{g\in \mathrm{Aut}~T_p\mid g|_v^1\in \langle \sigma\rangle \text{ for all }v\in T_p\},$$
where $g|_v^1$ is the \textit{label of $g$ at $v$}, i.e. the unique permutation $g|_v^1\in \mathrm{Sym}(p)$ such that
$$(vx)^g=v^gx^{g|_v^1}$$
for every $1\le x\le p$. For $p=2$, we have the equality $W_2=\mathrm{Aut}~T_2$.

More generally, for any $g\in \mathrm{Aut}~T$, any $v\in T$ and any $1\le n\le \infty$, we define~$g|_v^n$ the \textit{section of $g$ at $v$ of depth $n$}, as the unique automorphism $g|_v^n\in \mathrm{Aut}~T_v^{[n]}$ of the \textit{$n$th truncated tree $T_v^{[n]}$ at }$v$, i.e. the finite tree obtained by truncating the subtree~$T_v$ at level $n$, such that
$$(vu)^g=v^gu^{g|_v^n}$$
for every $u$ at level $k\le n$. For $n=\infty$, we have $T_v^{[\infty]}=T_v$ and we shall drop the superscript and write $g|_v$ as in the introduction. We have an isomorphism $$\psi_n:\mathrm{Aut}~T\to \big(\mathrm{Aut}~T_{v_1}\times\ldots\times \mathrm{Aut}~T_{v_{N_n}}\big)\rtimes \mathrm{Aut}~T^{[n]}$$
defined via
$$g\mapsto(g|_{v_1},\dotsc, g|_{v_{N_n}})g|_\emptyset^n,$$
where $\emptyset$ is the root of $T$ and $N_n$ is the number of vertices at level $n$. We shall write~$\psi$ for $\psi_1$.

Let $G\le \mathrm{Aut}~T$. We also define the homomorphism $\varphi_v:\mathrm{st}_G(v)\to \mathrm{Aut}~T_v$ via $g\mapsto g|_v$. For the $d$-regular rooted tree $T_d$, the subtrees rooted at any vertex are isomorphic to $T_d$. Thus, for any $g\in \mathrm{Aut}~T_d$ and any $v\in T_d$, we may regard $g|_v$ as an element of $\mathrm{Aut}~T_d$ under this identification. We say that $G\le \mathrm{Aut}~T_d$ is \textit{self-similar} if $g|_v\in G$ for every $g\in G$ and $v\in T_d$, and \textit{fractal} if $G$ is self-similar, level-transitive and $\varphi_v$ is surjective for every $v\in T_d$.

Recall that a closed subgroup $F\le \mathrm{Aut}~T_d$ is said to be of \textit{finite type} if there exists some $D\ge 1$ called the \textit{depth} and a set of \textit{patterns} $H\le \mathrm{Aut}~T_d^{[D]}$ such that
$$F=\{g\in \mathrm{Aut}~T_d\mid g|_v^D\in H \text{ for all }v\in T_d\}.$$

Note that a self-similar group $G\le \mathrm{Aut}~T_d$ embeds, for every $n\ge 1$, in the group of finite type given by the set of patterns $G/\mathrm{St}_G(n)$; see \cite[Section 3.5]{RestrictedSpectra}.

\subsection{Rigidity in the $p$-adic tree}

\cref{theorem: sufficient condition for rigidity in p-adic} takes an especially simple form for fractal groups, as one only needs to check condition (N) at the top of the tree. Recall that if $G$ is fractal and fulfills condition $(*)$, then $G\le W_p$ for some prime $p\ge 2$. We are now ready to prove \cref{Corollary: sufficient for p-adic}:

\begin{proof}[Proof of \cref{Corollary: sufficient for p-adic}]
    Let us prove first that the condition (N) is fulfilled if and only if $|G:\mathrm{St}_G(2)|>p^2$.

    Since $G$ is fractal, for any vertex $v\in T_p$, we get the equality $\varphi_v(\mathrm{st}_G(v))=G$. Thus, condition (N) is satisfied if and only if it is satisfied with $v_1=\emptyset$. Since $G\le W_p$, we have $\mathrm{st}_G(v)=\mathrm{St}_G(1)$ for any $v$ at the first level and
    $$\mathrm{Core}_{G}(\mathrm{st}_G(w))=\mathrm{St}_G(2)$$
    for any $w$ at the second level. Thus, since $|G:\mathrm{St}_G(1)|=p$, we get 
    \begin{align*}
        |\mathrm{St}_G(1):\mathrm{St}_G(2)|&>p \quad \text{if and only if}\quad |G:\mathrm{St}_G(2)|>p^2.
    \end{align*}
    
    Let us assume now that $|G:\mathrm{St}_G(2)|=p^2$. Then, either 
    $$G/\mathrm{St}_G(2)\cong C_p\times C_p\quad \text{or}\quad G/\mathrm{St}_G(2)\cong C_{p^2}.$$
    We prove $T_p$-rigidity  if $G/\mathrm{St}_G(2)\cong C_{p^2}$. Under this assumption, the group $G$ embeds in the group of finite type $F$ given by the set of patterns:
    $$G/\mathrm{St}_G(2)=\langle \xi\rangle,$$
    where $\xi$ is a transitive cycle in $\mathrm{Aut}~T^{[2]}_p\cong C_p\wr C_p$. Note that an element $\zeta \in \langle \xi\rangle$ acts non-trivially on the first level of~$T_p^{[2]}$ if and only if $\zeta$ has order $p^2$.

    Let us fix $g\in F$ and $v\in T_p$ a vertex fixed by $g$. Then, either $g|_v$ fixes the immediate descendants of $v$ or $g|_v^2$ has order $p^2$. If $g|_v^2$ has order $p^2$, we get
    $$g^p|_v^2=(g|_v^2)^p\ne 1,$$
    as $v$ is fixed by $g$. Therefore $g^p|_{w}^2$ has order $p^2$ for any immediate descendant $w$ of $v$, as $g^p|_{w}^2$ acts non-trivially on the immediate descendants of $w$.  We get the level-transitivity of $g|_v$ iterating this argument. Hence, we obtain the following dichotomy:
    \begin{enumerate}[\normalfont(i)]
        \item either $g$ fixes the immediate descendants of $v$; or
        \item $g|_v$ acts level-transitively on $T_p$.
    \end{enumerate}
    Let $H\le G$ be such that:
    \begin{enumerate}[\normalfont(a)]
        \item $\mathrm{st}_G(w_n)\le H\le \mathrm{st}_G(v_1)$; and
        \item $H$ has index $p$ in $\mathrm{st}_G(v_1)$.
    \end{enumerate}
    Then, using the above dichotomy (i)-(ii) and \cref{lemma: orbits of elements determine basal}, we get $H=\mathrm{st}_G(v_2)$, where~$v_2$ is an immediate descendant of~$v_1$. Therefore~$G$ is $T_p$-rigid, arguing by induction on the index of $H$ in $G$ and applying \cref{proposition: characterization of rigidity}.

    To conclude, let us prove the converse, i.e. $G/\mathrm{St}_G(2)\cong C_p\times C_p$ implies that $G$ is not $T_p$-rigid. Note that by \cref{remark: normality}, we have $\mathrm{St}_G(2)=\mathrm{st}_G(v)$ for any vertex~$v$ at level 2. Let us write $G/\mathrm{St}_G(2)=H_1\times H_2$, where $H_1$ and $H_2$ correspond to $G/\mathrm{St}_G(1)$ and $\mathrm{St}_G(1)/\mathrm{St}_G(2)$ respectively. Then, by the third isomorphism theorem, we may define the unique subgroup $K\le G$ such that $\mathrm{St}_G(2)\le K$ and $K/\mathrm{St}_G(2)=H_1$. Then, the subgroup~$K$ contains $\mathrm{St}_G(2)=\mathrm{st}_G(v)$ and it has index $p$, so $K\in \mathcal{F}_{T_p}$. However $K$ moves every vertex in $T_p$ but the root, so  $K$ is not a vertex stabilizer. Therefore $G$ is not $T_p$-rigid by \cref{proposition: characterization of rigidity}.
\end{proof}

\subsection{Hausdorff dimension and rigidity}

Hausdorff dimension is strongly related to weak branchness. The author has proved recently that a weakly regular branch group has positive Hausdorff dimension \cite[Proposition~5.3]{RestrictedSpectra} and that a self-similar group with positive Hausdorff dimension in $W_p$ is weakly branch \cite[Theorem~C]{AV}. More generally, almost all of the projections of a positive-dimensional subgroup of~$W_p$ must be weakly branch \cite[Theorem 2.1]{ArborealJorge}.

In order to prove \cref{Corollary: hdim and rigidity}, we shall need the tools to compute the Hausdorff dimension developed by the author in \cite{RestrictedSpectra}. The sequence $\{s_n(G)\}_{n\ge 1}$, given by
$$s_n(G):=p\log_p|\mathrm{St}_G(n-1):\mathrm{St}_G(n)|-\log_p|\mathrm{St}_G(n):\mathrm{St}_G(n+1)|$$
was first introduced in \cite{GeneralizedBasilica}. In \cite{RestrictedSpectra}, the author considered the ordinary generating function 
$$ S_{G}(x):=\sum_{n\ge 1}s_n(G)x^n$$
and proved the following: 

\begin{theorem}[{see {\cite[Theorem B and Proposition 1.1]{RestrictedSpectra}}}]
\label{theorem: formula hdim jorge}
Assume that $G\le W_p$ is self-similar. Then $\{s_n(G)\}_{n\ge 1}$ is non-negative, the Hausdorff dimension of the closure of $G$ in $W_p$ is given by a proper limit and it may be computed as
$$\mathrm{hdim}_{W_p}(\overline{G})=1-S_{G}(1/p).$$
\end{theorem}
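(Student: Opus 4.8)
The plan is to reduce everything to the sequence of ``layer exponents''
$$c_n:=\log_p|\mathrm{St}_G(n-1):\mathrm{St}_G(n)|,\qquad n\ge 1,$$
with the convention $\mathrm{St}_G(0):=G$, so that $s_n(G)=p\,c_n-c_{n+1}$. First I would pass to the closure: since each $\mathrm{St}_G(n)$ is open and the closure map preserves indices of open subgroups, one has $\log_p|\overline{G}:\mathrm{St}_{\overline{G}}(n)|=\log_p|G:\mathrm{St}_G(n)|=\sum_{k=1}^n c_k=:L_n$, and $\overline{G}$ is again self-similar. As $W_p$ is the iterated wreath power of $C_p$, a direct count gives $\log_p|W_p:\mathrm{St}_{W_p}(n)|=(p^n-1)/(p-1)$. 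Thus the quantity to understand is $\liminf_n (p-1)L_n/(p^n-1)$, and everything reduces to the arithmetic of the sequence $\{c_n\}$.

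The crucial input --- and the step I expect to be the main obstacle --- is the non-negativity of $s_n$, equivalently the inequality $c_{n+1}\le p\,c_n$, which is exactly where self-similarity enters. I would prove it through the injective homomorphism $\psi:\mathrm{St}_G(1)\to G^{(p)}$ onto a subgroup of the direct product of $p$ copies of $G$, available because $G$ is self-similar. Since the sections of an element of $\mathrm{St}_G(n)$ lie in $\mathrm{St}_G(n-1)$, one gets $\psi(\mathrm{St}_G(n))\le \mathrm{St}_G(n-1)^{(p)}$, and a short check shows
$$\psi(\mathrm{St}_G(n))\cap \mathrm{St}_G(n)^{(p)}=\psi(\mathrm{St}_G(n+1)),$$
because an element of $\mathrm{St}_G(n)$ all of whose sections lie in $\mathrm{St}_G(n)$ is precisely an element of $\mathrm{St}_G(n+1)$. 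Hence $\mathrm{St}_G(n)/\mathrm{St}_G(n+1)$ embeds into $(\mathrm{St}_G(n-1)/\mathrm{St}_G(n))^{(p)}$, giving $|\mathrm{St}_G(n):\mathrm{St}_G(n+1)|\le|\mathrm{St}_G(n-1):\mathrm{St}_G(n)|^{p}$, i.e. $c_{n+1}\le p\,c_n$. The same inequality (with $\mathrm{St}_G(0)=G$) pins down the constant term: since $G/\mathrm{St}_G(1)\hookrightarrow C_p$ we have $c_1\in\{0,1\}$, and $c_1=0$ would propagate to $c_n=0$ for all $n$, forcing $G$ trivial; so for nontrivial $G$ we have $c_1=1$.

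With $s_n\ge 0$ established, the sequence $\alpha_n:=c_n/p^{n}$ is non-increasing (this is literally $c_{n+1}/p^{n+1}\le c_n/p^{n}$) and bounded below by $0$, so $\ell:=\lim_n\alpha_n$ exists. Applying Stolz--Cesàro to $a_n=L_n$ and the strictly increasing $b_n=(p^n-1)/(p-1)\to\infty$, with $a_n-a_{n-1}=c_n$ and $b_n-b_{n-1}=p^{n-1}$, yields
$$\mathrm{hdim}_{W_p}(\overline{G})=\lim_n\frac{L_n}{b_n}=\lim_n\frac{c_n}{p^{n-1}}=p\,\ell,$$
a genuine limit --- this is the ``proper limit'' assertion, upgrading the $\liminf$ in the definition of Hausdorff dimension. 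Finally I would telescope the weighted partial sums: a direct computation gives $\sum_{k=1}^n s_k\,p^{-k}=c_1-c_{n+1}p^{-n}$. Letting $n\to\infty$ and using $c_{n+1}p^{-n}=p\,\alpha_{n+1}\to p\ell$ (the partial sums converge monotonically to $S_G(1/p)$ because the coefficients $s_k$ are non-negative), one obtains $S_G(1/p)=c_1-p\ell=1-\mathrm{hdim}_{W_p}(\overline{G})$, which rearranges to the claimed formula. Beyond the self-similarity inequality of the second paragraph, the only remaining work is the convergence bookkeeping, handled by monotonicity, Stolz--Cesàro and the monotone convergence of a non-negative series; the whole argument is thus essentially driven by the single structural inequality $c_{n+1}\le p\,c_n$.
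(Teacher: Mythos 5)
Your proposal is correct, but note that the paper itself contains no proof of this statement: it is imported verbatim from \cite{RestrictedSpectra} (Theorem B and Proposition 1.1), so what you have written is a self-contained substitute for a citation rather than an alternative to an argument appearing here. Your derivation checks out at every step: the identity $\psi(\mathrm{St}_G(n))\cap \mathrm{St}_G(n)^{(p)}=\psi(\mathrm{St}_G(n+1))$ is exactly right (an element of $\mathrm{St}_G(n)$ lies in $\mathrm{St}_G(n+1)$ if and only if all its first-level sections stabilize level $n$, and self-similarity keeps those sections in $G$), and it yields the embedding of $\mathrm{St}_G(n)/\mathrm{St}_G(n+1)$ into $\bigl(\mathrm{St}_G(n-1)/\mathrm{St}_G(n)\bigr)^{(p)}$, hence $c_{n+1}\le p\,c_n$, i.e. $s_n\ge 0$ --- this is indeed the single structural input, and it is the same mechanism that drives the cited source. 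The monotonicity of $\alpha_n=c_n/p^n$ together with Stolz--Ces\`aro correctly upgrades the $\liminf$ in the Barnea--Shalev formula to a genuine limit equal to $p\ell$, and the telescoping identity $\sum_{k=1}^n s_k p^{-k}=c_1-c_{n+1}p^{-n}$ closes the computation; passing indices to the closure via openness of the level stabilizers is standard and is the same fact the paper itself uses in the proof of its rigidity-of-completions theorem. Two small remarks. First, you are right to flag the constant term: the theorem as stated is literally false for $G=1$ (there $\mathrm{hdim}_{W_p}(\overline{G})=0$ while $1-S_G(1/p)=1$), so nontriviality is a tacit hypothesis, and your observation that $c_1=0$ propagates to $c_n\equiv 0$ and forces $G=1$ is the clean way to see that $c_1=1$ otherwise. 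Second, your remark that $\overline{G}$ is again self-similar is true but unused --- equality of the indices $|G:\mathrm{St}_G(n)|=|\overline{G}:\mathrm{St}_{\overline{G}}(n)|$ is all you need. In short: a correct, complete and pleasantly economical proof of an externally cited result.
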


\textcolor{teal}{Theorems} \ref{Corollary: sufficient for p-adic} and \ref{theorem: formula hdim jorge} can be used to easily show that large fractal subgroups of~$W_p$ are $T_p$-rigid:
\begin{proof}[Proof of \cref{Corollary: hdim and rigidity}]
    Assume that 
    $$|G:\mathrm{St}_G(2)|=p^2.$$
    By \cref{Corollary: sufficient for p-adic}, it is enough to show that $\mathrm{hdim}_{W_p}(\overline{G})\le 1/p$ under this assumption. Our first assumption yields
    $$s_1(G):=p\log_p|G:\mathrm{St}_G(1)|-\log_p|\mathrm{St}_G(1):\mathrm{St}_G(2)|=p-1.$$
    Therefore, by \cref{theorem: formula hdim jorge}, we get
    \begin{align*}
        \mathrm{hdim}_{W_p}(\overline{G})&=1-S_{G}(1/p)=1-\sum_{n\ge 1}\frac{s_n(G)}{p^n}\le 1-\frac{s_1(G)}{p}=\frac{1}{p}.\qedhere
    \end{align*}
    \end{proof}

    \begin{remark}
    \label{remark: tfg}
        The lower bound in \cref{Corollary: hdim and rigidity} is sharp. Let us define the sequence $\mathcal{S}:=\{S_n\}_{n\ge 0}$, where
        \begin{enumerate}[\normalfont(i)]
            \item $S_0:=\langle \sigma\rangle \le \mathrm{Sym}(p)$;
            \item $\psi(S_1):=D_p(S_0)$, i.e. the diagonal embedding of $S_0$ in $S_{0}\times\overset{p}{\ldots}\times S_{0}$ via $\psi$;
            \item $\psi(S_n):=S_{n-1}\times\overset{p}{\ldots}\times S_{n-1}$ for $n\ge 2$.
        \end{enumerate}
        Let us consider the group $G_\mathcal{S}\le W_p$ (see \cite[Section 4]{RestrictedSpectra} for the definition of this family of groups $G_\mathcal{S}$). It is shown in \cite[Section 4]{RestrictedSpectra} that $G_\mathcal{S}$ is fractal and branch and
        $$\mathrm{hdim}_{W_p}(G_\mathcal{S})=1-\sum_{n\ge 1}\frac{s_n(G_\mathcal{S})}{p^n}= 1-\frac{s_1(G_\mathcal{S})}{p}=\frac{1}{p}.$$
        However, by \cref{Corollary: sufficient for p-adic}, the group $G_\mathcal{S}$ is not $T_p$-rigid as $G_\mathcal{S}/\mathrm{St}_{G_\mathcal{S}}(2)\cong C_p\times C_p$.

   On the other hand, if a fractal weakly branch group $G\le W_p$ is finitely generated (or $\overline{G}$ is topologically finitely generated) then the assumption on the Hausdorff dimension of $\overline{G}$ in \cref{Corollary: hdim and rigidity} can be relaxed to
        $$\mathrm{hdim}_{W_p}(\overline{G})\ge\frac{1}{p}.$$
        In fact, as the sequence $\{s_n(G)\}_{n\ge 1}$ is non-negative by \cref{theorem: formula hdim jorge}, both 
        $$\mathrm{hdim}_{W_p}(\overline{G})=\frac{1}{p}\quad\text{and}\quad|G:\mathrm{St}_G(2)|=p^2$$
        are only achieved at the same time if and only if $s_n(G)=0$ for $n\ge 2$ by \cref{theorem: formula hdim jorge}. However, this implies by \cite[Theorem 3.4]{RestrictedSpectra} and \cite[Theorem 3]{SunicHausdorff} (see also \cite[Proposition 7.5]{GrigorchukFinite}) that $\overline{G}$ is a group of finite type of depth 2. By \cite[Corollary~8]{Bondarenko}, the group $\overline{G}$ is not topologically finitely generated, as the set of defining patterns $G/\mathrm{St}_G(2)$ is abelian (note that any finite group of order $p^2$ is abelian).
    \end{remark}

For $T_p$-rigid groups, the Hausdorff dimension in $W_p$ is a group-invariant (as the Hausdorff dimension in $\mathrm{Aut}~T_p$, and thus in $W_p$, is preserved by conjugation in $\mathrm{Aut}~T_p$). Therefore, the Hausdorff dimension can be used to easily prove that two weakly branch groups are not isomorphic as abstract groups. We illustrate this with an example.

Let $B$ be the \textit{Basilica group}, i.e. the group $B:=\langle a,b\rangle\le \mathrm{Aut}~T_2$ (introduced by Grigorchuk and $\dot{\mathrm{Z}}$uk in \cite{Basilica}), where $\psi(a)=(1,b)$ and $\psi(b)=(1,a)\sigma$ for $\sigma=(1\,2)\in \mathrm{Sym}(2)$. Note that $B$ can also be defined as the iterated monodromy group of the complex polynomial $z^2-1$.

Let $H\le \mathrm{Aut}~T_2$ be the \textit{Brunner-Sidki-Vieira} group (introduced in \cite{BSV}) generated by the automorphisms $c$ and $d$, where $\psi(c)=(1,c)\sigma$ and $\psi(d)=(1,d^{-1})\sigma$ again for $\sigma=(1\,2)\in \mathrm{Sym}(2)$.

Both the Basilica group $B$ and the Brunner-Sidki-Vieira group are weakly branch; see \cite{BartholdiNekra, BSV}.

\begin{corollary}
\label{corollary: basilica bsv}
    The Basilica group $B$ and the Brunner-Sidki-Vieira group $H$ are not isomorphic as abstract groups.
\end{corollary}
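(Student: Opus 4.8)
The plan is to use the Hausdorff dimension in $W_2$ as an abstract group invariant, which is legitimate precisely because both groups are $T_2$-rigid. First I would verify that both the Basilica group $B$ and the Brunner--Sidki--Vieira group $H$ fall under the hypothesis of \cref{Corollary: sufficient for p-adic}: both are self-similar and fractal subgroups of $W_2=\mathrm{Aut}~T_2$ (this is standard and cited in the text), both are weakly branch, and to conclude $T_2$-rigidity I need to check that $G/\mathrm{St}_G(2)\not\cong C_2\times C_2$ for each of $G=B$ and $G=H$. Concretely, I would compute the quotient by the second level stabilizer from the given wreath recursions $\psi(a)=(1,b)$, $\psi(b)=(1,a)\sigma$ and $\psi(c)=(1,c)\sigma$, $\psi(d)=(1,d^{-1})\sigma$, and observe that the action on the first two levels produces a cyclic group of order $4$ rather than $C_2\times C_2$ (for both groups a single generator acts as a $4$-cycle on the second level), so that $G/\mathrm{St}_G(2)\cong C_4$. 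By \cref{Corollary: sufficient for p-adic} this gives $T_2$-rigidity of both $B$ and $H$.

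Next I would invoke the fact, recorded in the paragraph preceding the corollary, that for a $T_2$-rigid group the Hausdorff dimension $\mathrm{hdim}_{W_2}(\overline{G})$ is an isomorphism invariant of the abstract group: any abstract isomorphism $B\cong H$ would, by $T_2$-rigidity, be realizable (after passing to closures in the congruence topology) as conjugation by an element of $\mathrm{Aut}~T_2$, and conjugation in $\mathrm{Aut}~T_2$ preserves the filtration $\{\mathrm{St}(n)\}$ up to relabelling and hence preserves the Hausdorff dimension. So it suffices to exhibit distinct Hausdorff dimensions for $\overline{B}$ and $\overline{H}$ in $W_2$.

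The computational heart of the argument is then to evaluate $\mathrm{hdim}_{W_2}(\overline{B})$ and $\mathrm{hdim}_{W_2}(\overline{H})$ via \cref{theorem: formula hdim jorge}, i.e. by computing the sequences $\{s_n(B)\}$ and $\{s_n(H)\}$ from the successive indices $|\mathrm{St}_G(n-1):\mathrm{St}_G(n)|$ and summing the generating function at $x=1/2$. This reduces to finite bookkeeping using the self-similar recursions; since both groups are well studied, their Hausdorff dimensions in $W_2$ are known, and I expect the two values to differ. The conclusion $\mathrm{hdim}_{W_2}(\overline{B})\neq\mathrm{hdim}_{W_2}(\overline{H})$ then forces $B\not\cong H$ as abstract groups.

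The main obstacle I anticipate is not conceptual but bookkeeping: one must be careful that the Hausdorff dimension is genuinely being compared in the \emph{same} ambient group $W_2$ for both, and that the $T_2$-rigidity hypothesis is what licenses transferring an abstract isomorphism to an $\mathrm{Aut}~T_2$-conjugacy (this is exactly the content that makes $\mathrm{hdim}_{W_2}$ well defined on the abstract isomorphism class, rather than merely on the conjugacy class of a fixed embedding). Once the $T_2$-rigidity of both groups is in hand from \cref{Corollary: sufficient for p-adic}, the remaining step is the explicit dimension computation, which is routine given \cref{theorem: formula hdim jorge}.
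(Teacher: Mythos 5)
Your strategy coincides with the paper's: establish $T_2$-rigidity via \cref{Corollary: sufficient for p-adic} and then use the Hausdorff dimension in $W_2$, which rigidity promotes from a conjugacy invariant of a fixed embedding to an invariant of the abstract isomorphism class, to separate the two groups. However, two points need fixing. First, your level-two computation for the Basilica group is wrong: from $\psi(a)=(1,b)$ and $\psi(b)=(1,a)\sigma$ one gets $b^2=(a,a)\in\mathrm{St}_B(2)$, so $\bar{b}$ has order $2$, while $\bar{a}$ acts as $(1,\sigma)$ on the second level; hence $B/\mathrm{St}_B(2)\cong C_2\wr C_2$, a dihedral group of order $8$, and no generator of $B$ acts as a $4$-cycle at level $2$. (Your claim is correct for the Brunner--Sidki--Vieira group, where indeed $c$ acts as a $4$-cycle on level $2$ and $H/\mathrm{St}_H(2)\cong C_4$.) The error is repairable rather than fatal, since the criterion only requires $B/\mathrm{St}_B(2)\not\cong C_2\times C_2$, which still holds; alternatively, rigidity of $B$ follows from \cref{Corollary: hdim and rigidity}, as $\mathrm{hdim}_{W_2}(\overline{B})=2/3>1/2$. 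Note also that rigidity of only \emph{one} of the two groups is needed: if $B\cong H$, the embedding of $H$ provides a second weakly branch action of $B$ on $T_2$, and $T_2$-rigidity of $B$ alone forces the images to be conjugate in $\mathrm{Aut}~T_2$; your verification for $H$ is harmless but redundant, and this is how the paper proceeds.

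Second, you leave the decisive step as ``I expect the two values to differ,'' which as written is an expectation, not a proof. The paper closes this by citing the known values $\mathrm{hdim}(\overline{B})=2/3\neq 1/3=\mathrm{hdim}(\overline{H})$; your proposed computation through \cref{theorem: formula hdim jorge} would reproduce these values, but you must actually carry it out (or cite the literature) for the argument to be complete.
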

\begin{proof}
    By either \cref{Corollary: sufficient for p-adic} or \cref{Corollary: hdim and rigidity}, the Basilica group $B$ is $T_2$-rigid. Thus, if~$B$ and $H$ are isomorphic, they are conjugate in $\mathrm{Aut}~T_2$. Since conjugation in $\mathrm{Aut}~T_2$ preserves the Hausdorff dimension, the closures of both groups should have the same Hausdorff dimension if the groups are isomorphic. However, by \cite[Examples 2.4.5 and 2.4.6]{BartholdiTree} (see \cite[Theorem~C]{pBasilica} and \cite[Theorem~A]{JorgeMikel} for the proofs), we have
    \begin{align*}
        \mathrm{hdim}(\overline{B})&=\frac{2}{3}\ne \frac{1}{3}=\mathrm{hdim}(\overline{H}).\qedhere
    \end{align*}
\end{proof}

\section{Disproval of Boston's conjecture}
\label{section: Boston conjecture}

In this last section, we prove \cref{Theorem: Boston conjecture}. The proof will use essentially all the theory developed in the previous sections. In \cref{section: p-adic}, we developed a strategy, based on \cref{lemma: orbits of elements determine basal}, to check the condition $\mathcal{F}_T=\mathcal{S}_\rho$ in \cref{proposition: characterization of rigidity}. We successfully applied this strategy in the previous section to fractal groups of $p$-adic automorphisms.  In this section, we apply this strategy to the zero-dimensional just-infinite branch groups introduced by the author in \cite{RestrictedSpectra}. The main goal is to prove that, even if the branch actions of these groups on the $p$-adic tree $T_p$ are not $T_p$-rigid, there exist natural trees $T_n$ obtained from $T_p$ by deletion of levels, such that the induced branch actions on these trees $T_n$ are indeed $T_n$-rigid. We use $T_n$-rigidity to prove that every possible branch action of the groups $G_n$ is zero-dimensional, proving \cref{Theorem: Boston conjecture} and finally disproving Boston's conjecture.

\subsection{The just-infinite groups $G_n$}

We introduce the just-infinite branch groups of $p$-adic automorphisms with zero-dimensional closures introduced by the author in \cite{RestrictedSpectra}, for some prime $p$. Let us consider $\sigma:=(1\,2\,\dotsb \,p)\in\mathrm{Sym}(p)$ and the group of $p$-adic automorphisms $W_p$.

Let us define the groups $G_n\le W_p$ as in \cite[Section 6]{RestrictedSpectra}. We present the construction for $p\ge 5$, but, as remarked at the end of \cite[Section 6]{RestrictedSpectra}, the construction can be adapted for $p=2,3$.

Let $\{l_n\}_{n\ge 1}$ be the increasing sequence of levels of $T_p$ given by $l_1=2$ and $l_{n+1}:= p^{l_n-1}$ for $n\ge 1$. Given an automorphism $g\in W_p$, we set $d_0(g):=g$ and, for every $i\ge 1$, we define the automorphism $d_i(g)\in W_p$ via
$$\psi_i(d_i(g))=(g,\dotsc,g).$$
We define $\psi(a):=(1,\dotsc,1)\sigma\in W_p$. For all $n\ge 1$, we further define $b_n$ via
$$\psi_{l_{n}}(b_{n})=(d_0(a),d_1(a),\dotsc,d_{l_{n+1}-1}(a),1,\dotsc,1,b_{n+1}).$$
Lastly, for every $n\ge 1$, we define the finite group 
$$A_n:=\langle d_i(a)~|~i=0,1,\dotsc, l_n-1\rangle$$
and the infinite group 
$$G_n:=\langle A_n,b_{n}\rangle.$$

In the following proposition, we summarize the main properties of the groups~$G_n$ proved by the author in \cite{RestrictedSpectra}:

\begin{proposition}
    \label{proposition: properties of Gn}
    For every $n\ge 1$, the subgroup $G_n\le W_p$ satisfies the following:
    \begin{enumerate}[\normalfont(i)]
        \item we have $\mathrm{St}_{G_n}(l_n)=\langle b_n\rangle^{A_n}$ and $G_n=\mathrm{St}_{G_n}(l_n)\rtimes A_n$ \cite[Proposition 6.6(i)]{RestrictedSpectra};
        \item we have $\varphi_v(\mathrm{St}_{G_n}(l_n))=G_{n+1}$ for any $v$ at level $l_n$ \cite[Proposition 6.6(ii)]{RestrictedSpectra};
        \item the group $G_n$ is just-infinite and branch \cite[Proposition 6.11]{RestrictedSpectra};
        \item the closure of $G_n$ in $W_p$ has Hausdorff dimension zero \cite[Proposition 6.12]{RestrictedSpectra}.
    \end{enumerate}
\end{proposition}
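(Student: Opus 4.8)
The statement is a compilation of four results established by the author in \cite{RestrictedSpectra}, so strictly speaking its proof consists of matching each item to its citation: \cite[Proposition 6.6(i)]{RestrictedSpectra}, \cite[Proposition 6.6(ii)]{RestrictedSpectra}, \cite[Proposition 6.11]{RestrictedSpectra} and \cite[Proposition 6.12]{RestrictedSpectra} respectively. Let me nonetheless indicate how I would verify each directly from the recursive definition of the family $\{G_n\}$, since the interdependence of the four items is the conceptual heart of the construction. For (i), I would first read off from $\psi_{l_n}(b_n)=(d_0(a),\dots,d_{l_{n+1}-1}(a),1,\dots,1,b_{n+1})$ that $b_n$ has trivial action on the first $l_n$ levels, whence $b_n\in\mathrm{St}_{G_n}(l_n)$. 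On the other hand each generator $d_i(a)$ of $A_n$ with $i\le l_n-1$ is supported on levels $\le l_n$, and the $d_i(a)$ pairwise commute and have order $p$, so $A_n\cong C_p^{l_n}$ embeds faithfully into $\mathrm{Aut}~T^{[l_n]}$ and acts regularly on level $l_n$; in particular $A_n\cap\mathrm{St}_{G_n}(l_n)=1$. Since $G_n=\langle A_n,b_n\rangle$ with $b_n\in\mathrm{St}_{G_n}(l_n)\trianglelefteq G_n$, a comparison of orders (both $G_n/\langle b_n\rangle^{G_n}$ and $G_n/\mathrm{St}_{G_n}(l_n)$ are quotients of $A_n$, and one surjects onto the other) gives $\mathrm{St}_{G_n}(l_n)=\langle b_n\rangle^{G_n}$ and the semidirect decomposition $G_n=\mathrm{St}_{G_n}(l_n)\rtimes A_n$. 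The one genuinely computational point is upgrading $\langle b_n\rangle^{G_n}$ to $\langle b_n\rangle^{A_n}$, i.e.\ checking that $\langle b_n\rangle^{A_n}$ is already normalized by $b_n$; this I would verify by expanding the commutators $[b_n^{a'},b_n]$ for $a'\in A_n$ via the section data above.

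For (ii), since $A_n$ is transitive on level $l_n$, conjugating $b_n$ by suitable elements of $A_n$ carries each coordinate of $\psi_{l_n}(b_n)$ onto the fixed vertex $v$. Reading off those coordinates, $\varphi_v(\langle b_n\rangle^{A_n})$ contains $\langle a,d_1(a),\dots,d_{l_{n+1}-1}(a),b_{n+1}\rangle=\langle A_{n+1},b_{n+1}\rangle=G_{n+1}$, and the reverse inclusion is immediate from self-similarity. Together with (i) this is the recursive engine of the family: the first-level sections of the top level stabilizer reproduce the next group $G_{n+1}$.

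Items (iii) and (iv) are what I expect to require the most work, and I would derive them from (i) and (ii). For branchness I would iterate (ii) to manufacture large rigid vertex stabilizers, using $\varphi_v(\mathrm{Rist}_{G_n}(l_n))\supseteq G_{n+1}$ together with the finite-index complement $A_n$ to bound each $|G_n:\mathrm{Rist}_{G_n}(m)|$; just-infiniteness I would obtain, as is standard for branch groups, by showing that every nontrivial normal subgroup contains a rigid level stabilizer, exploiting the $C_p^{l_n}$-by-self-similar structure from (i). Finally, for (iv) I would invoke \cref{theorem: formula hdim jorge}, which reduces the Hausdorff dimension of $\overline{G_n}$ to the single evaluation $\mathrm{hdim}_{W_p}(\overline{G_n})=1-S_{G_n}(1/p)$. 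The recursion determines the sequence $\{s_m(G_n)\}_{m\ge 1}$, and the doubly-exponential spacing $l_{n+1}=p^{l_n-1}$ makes the nontrivial contributions so sparse that $S_{G_n}(1/p)=1$, giving dimension zero. The delicate part here is controlling the $s_m$ precisely enough to see that the sparse jumps sum to exactly $1$; this sparsity, forced by the growth of the $l_n$, is exactly the mechanism that collapses the dimension.
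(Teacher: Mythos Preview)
Your core observation is exactly right: the paper gives no proof of this proposition at all, treating it purely as a summary with in-line citations to \cite{RestrictedSpectra}; your first sentence already matches the paper's ``proof'' completely.

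Your supplementary sketches for (i)--(iii) are sound and track the expected arguments. One caution on your sketch for (iv): you propose invoking \cref{theorem: formula hdim jorge}, but that result is stated for \emph{self-similar} subgroups of $W_p$, and the groups $G_n$ are not self-similar in the standard sense---by (ii) the sections of $\mathrm{St}_{G_n}(l_n)$ land in $G_{n+1}$, not back in $G_n$. The actual argument in \cite[Proposition~6.12]{RestrictedSpectra}, as invoked later in this paper in the proof of \cref{Theorem: Boston conjecture}, instead computes the lower limit along the sparse subsequence of levels $\{t_k^n\}_{k\ge 0}$ directly, using the recursive relations (i)--(ii) to control $\log_p|G_n:\mathrm{St}_{G_n}(t_k^n)|$ against $\log_p|W_p:\mathrm{St}_{W_p}(t_k^n)|$. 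The doubly-exponential growth of the $l_n$ that you highlight is indeed the mechanism, but it enters through this subsequence estimate rather than through the generating function $S_{G_n}(1/p)$.
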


Lastly, for each $n\ge 1$, we define the collection of levels $\{t_k^n\}_{k\ge 0}$ as $t_0^n:=0$ and $t_k^n:=\sum_{i=0}^{k-1} l_{n+i}$ for each $k\ge 1$. We also define the auxiliary finite groups 
$$\widetilde{A}_n:=\langle d_i(a)~|~i=0,1,\dotsc, l_n-2\rangle.$$
We note that both $A_n$ and $\widetilde{A}_n$ are elementary $p$-abelian by \cite[Lemma~6.4]{RestrictedSpectra}. Furthermore, we have the relations
\begin{align}
    \label{align: properties of An}
    A_{n+1}\cong C_p^{l_{n+1}}=C_p^{p^{l_n-1}}= C_p^{|\widetilde{A}_n|}\quad  \text{and}\quad \psi(A_n)=D_p(\widetilde{A}_n)\rtimes \langle a\rangle,
\end{align}
where $D_p(\widetilde{A}_n)$ denotes the diagonal embedding of $\widetilde{A}_n$ in $A_n$ via $\psi$.

\subsection{Deletion of levels}

Given a spherically homogeneous rooted tree $T$, we may obtain a new tree $\widetilde{T}$ from $T$ by \textit{deletion of some collection of levels} $\{a_n\}_{n\ge 1}\subseteq \mathbb{N}$. The set of vertices of $\widetilde{T}$ are precisely the vertices in $T$ not at levels $\{a_n\}_{n\ge 1}$ and the set of edges is the induced one from $T$, i.e. the unique possible set of edges such that $\widetilde{T}$ is a tree and two distinct vertices in $\widetilde{T}$ are comparable if and only if they are comparable in $T$. Note that $\widetilde{T}$ is also a spherically homogeneous rooted tree. Clearly, if $\rho:G\to \mathrm{Aut}~T$ is a (weakly) branch action, we obtain an induced (weakly) branch action $\widetilde{\rho}:G\to \mathrm{Aut}~\widetilde{T}$ for any tree $\widetilde{T}$ obtained from $T$ by deletion of levels.

Let $T_p$ be the $p$-adic tree. For each $n\ge 1$, we define the tree $T_n$ by deleting from~$T_p$ all the levels but the ones in the collection $\{t^n_k\}_{k\ge 0}$. In other words, for each $k\ge 0$, the level $k$ in $T_n$ corresponds to the level $t_k^n$ in $T_p$.

We will be working simultaneously with the branch action of $G_n$ on $T_p$ given in the definition of $G_n$ as a group of $p$-adic automorphisms and with its induced action on $T_n$. We shall write 
$$\rho_n:G_n\to W_p\quad \text{and}\quad \tau_n:G_n\to \mathrm{Aut}~T_n$$
for the branch action of $G_n$ on $T_p$ and the induced one (by $\rho_n$) on $T_n$, respectively. Note that
$$\mathrm{St}_{\tau_n}(k)=\mathrm{St}_{\rho_n}(t_k^n)\quad \text{and}\quad \mathrm{st}_{\tau_n}(v)=\mathrm{st}_{\rho_n}(v)$$
for any $k\ge 0$ and any vertex $v$ at some level $t_k^n$.

\subsection{$T_n$-rigidity}

Before proceeding with the proofs, we set a last piece of notation. For a branch action $\chi:G\to \mathrm{Aut}~T$, a vertex $v\in T$ and a subgroup $H\le G$, we write 
$$H_v:=\varphi_v(\chi(H\cap \mathrm{st}_{\chi}(v)))\le \mathrm{Aut}~T_v,$$
where $T_v$ is the subtree rooted at $v$.

We first prove the following technical lemma:

\begin{lemma}
    \label{lemma: indices of action of H}
    For each $n\ge 1$, let $\mathrm{st}_{\rho_n}(w)\le H\le \mathrm{St}_{\rho_n}(1)$, where $w$ lies at level $t_2^n$ in $T_p$. Assume further that
   $$\log_p |G_n:H\mathrm{St}_{\rho_n}(t_1^n)|=\ell.$$
   Then 
   $$\log_p|\mathrm{St}_{\rho_n}(t_1^n):H\cap \mathrm{St}_{\rho_n}(t_1^n)|=k\cdot p^{l_n-\ell}$$
   for some integer $k\ge 0$.
\end{lemma}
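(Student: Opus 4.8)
The plan is to carry out the whole computation inside the finite quotient $G_n/\mathrm{St}_{\rho_n}(t_2^n)$ and to reinterpret the index as a dimension over the group algebra $\mathbb{F}_p[A_n]$. Write $N:=\mathrm{St}_{\rho_n}(t_1^n)=\mathrm{St}_{G_n}(l_n)$ and $N_2:=\mathrm{St}_{\rho_n}(t_2^n)=\mathrm{St}_{G_n}(l_n+l_{n+1})$. Since $w$ lies at level $t_2^n$ we have $N_2\le \mathrm{st}_{\rho_n}(w)\le H$, so $H\cap N\ge N_2$ and the quantity we must control equals $\log_p|V:\overline{H\cap N}|$, where $V:=N/N_2$ and the bar denotes image modulo $N_2$. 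The first step is to identify $V$ as a module. By \cref{proposition: properties of Gn} we have $N=\langle b_n\rangle^{A_n}$ and $G_n=N\rtimes A_n$, and, reading $\psi_{l_n}(b_n)$ modulo $N_2$, the final section $b_{n+1}$ vanishes (it lies in $\mathrm{St}_{G_{n+1}}(l_{n+1})$) while the first $l_{n+1}$ sections become the generators of $A_{n+1}\cong C_p^{l_{n+1}}$. As each $d_i(a)$ has trivial section at level $l_n$, the group $A_n$ acts on $V$ by permuting the $p^{l_n}$ coordinates via its regular action on that level; hence $V$ is the cyclic $\mathbb{F}_p[A_n]$-module generated by $\overline{b_n}$, and a direct annihilator computation shows it is free of rank one, so $\dim_{\mathbb{F}_p}V=p^{l_n}$.

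Next I would bring in $H$. Put $\bar A:=HN/N\le A_n$; the hypothesis $\log_p|G_n:H\,N|=\ell$ is exactly $|\bar A|=p^{l_n-\ell}$, and since $H\le\mathrm{St}_{\rho_n}(1)$ one checks that $\bar A$ lies in the coordinate hyperplane of $A_n$ fixing the first letter of each level-$l_n$ word. Because $V$ is elementary abelian and $H$ normalizes $H\cap N$, the subspace $\overline{H\cap N}$ is $\bar A$-invariant, i.e.\ an $\mathbb{F}_p[\bar A]$-submodule of $V$. Let $u$ be the level-$l_n$ ancestor of $w$ and $\pi_u\colon V\to A_{n+1}$ the projection onto the $u$-coordinate (surjective by \cref{proposition: properties of Gn}, as $\varphi_u(N)=G_{n+1}$). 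The regular action of $A_{n+1}$ on the descendants of $u$ gives $\overline{\mathrm{st}_{\rho_n}(w)}=\ker(\pi_u|_V)$, whence $\overline{H\cap N}=\pi_u|_V^{-1}(W)$ for the $\bar A$-submodule $W:=\pi_u(\overline{H\cap N})\le A_{n+1}$. Surjectivity then yields $\log_p|V:\overline{H\cap N}|=l_{n+1}-\dim_{\mathbb{F}_p}W$, and, since $|\bar A|=p^{l_n-\ell}$ divides $l_{n+1}=p^{l_n-1}$, the lemma reduces to showing that $\dim_{\mathbb{F}_p}W$ is divisible by $|\bar A|$, i.e.\ that $W$ is a \emph{free} $\mathbb{F}_p[\bar A]$-module (and then $k$ is its rank).

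The hard part is precisely this freeness. It is not automatic: $A_{n+1}$ restricts to a free $\mathbb{F}_p[\bar A]$-module, but submodules of a free module over the local algebra $\mathbb{F}_p[\bar A]$ need not be free — the augmentation ideal already has dimension not divisible by $|\bar A|$ — so for a merely $\bar A$-invariant $W$ the conclusion can fail. The extra input I would exploit is that $H$ participates in a $T_n$-filtration, which lets \cref{lemma: orbits of elements determine basal} intervene: I would argue that a ``partial'' projection $W$ would make the $\rho_n(H)$-orbit and the $\rho_n(H\cap N)$-orbit of $u$ incompatible with those of a genuine vertex stabilizer, contradicting membership in the filtration. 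Concretely, I expect the $\bar A$-orbit of $u$ (of size $|\bar A|$) to force the relevant coordinate projections to coincide $\bar A$-equivariantly, exhibiting $W$ as an induced, hence free, $\mathbb{F}_p[\bar A]$-module. Establishing this induced structure, and thereby excluding the intermediate submodules, is the crux of the proof; everything else is the bookkeeping of the module identification in the first two paragraphs.
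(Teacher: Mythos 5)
Your first two paragraphs are correct, and they essentially reproduce the paper's own bookkeeping, just organized at the root instead of after projecting to the level-$1$ vertex $v$ above $w$: the paper likewise identifies $\mathrm{St}_{\rho_n}(t_1^n)/\mathrm{St}_{\rho_n}(t_2^n)$ (in the projection at $v$) with the regular permutation module $\langle\widetilde{b}_n\rangle^{\widetilde{A}_n}\cong C_p^{|\widetilde{A}_n|}$ on the conjugates of $\widetilde{b}_n$, uses $\mathrm{st}_{\rho_n}(w)\le H$ to kill all coordinates except the one over $w$, and is thereby reduced to exactly the divisibility you isolate: the $H_1$-invariant subgroup $H_2$ must have $\log_p|H_2|$ a multiple of $|H_1|=p^{l_n-\ell}$. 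But that reduction is where your argument stops, and it is where the lemma's content lives. The paper settles it by asserting that $H_2=\prod_{s\in S}\langle\widetilde{b}_n^{s}\rangle^{H_1}$ for a set $S$ of left cosets of $H_1$ in $\widetilde{A}_n$, i.e.\ that $H_2$ is generated by whole $H_1$-orbits of basis conjugates, which gives $\log_p|H_2|=\#S\cdot|H_1|$ at once; you never establish your corresponding claim, and you say so yourself.

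Moreover, the specific route you sketch cannot be repaired as stated. The set $W=\pi_u(\overline{H\cap N})$ carries no useful $\mathbb{F}_p[\bar{A}]$-structure: every element of $A_n$ has trivial sections at level $l_n$, and every non-trivial element of $\bar{A}\le D_p(\widetilde{A}_n)$ moves $u$, so conjugation only yields the identity $\pi_{u^{h}}(\overline{H\cap N})=\pi_u(\overline{H\cap N})$ for $h\in\bar{A}$; the induced action on the target copy of $A_{n+1}$ is trivial, and a non-zero free module with trivial action forces $\bar{A}=1$. Whatever divisibility holds must be extracted from $\overline{H\cap N}\le V$ (a full $\pi_u$-preimage) together with the fact that the image of $H$ is a \emph{subgroup}, i.e.\ from the section/cocycle data tying $H_1$ to $H_2$ --- this is what the paper's auxiliary subgroup $K=H_2\rtimes H_1$ is gesturing at --- and not from $W$ sitting alone inside $A_{n+1}$. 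Your fallback input is also unavailable: the lemma is stated for an \emph{arbitrary} $H$ with $\mathrm{st}_{\rho_n}(w)\le H\le \mathrm{St}_{\rho_n}(1)$ and prescribed $\ell$, with no filtration hypothesis, so you cannot invoke membership of $H$ in a $T_n$-filtration or \cref{lemma: orbits of elements determine basal} here; those enter only later, in \cref{lemma: climb levels} and \cref{theorem: my groups are T-rigid}. Your instinct that bare $H_1$-invariance is insufficient (the diagonal, i.e.\ norm, submodule of the regular module is the standard obstruction) is sound and pinpoints the delicate step --- it is in fact the step a careful reader should also press on in the paper's own terse treatment --- but as submitted your argument establishes only the framing of the lemma, not its conclusion.
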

\begin{proof}
    As $H\le \mathrm{St}_{\rho_n}(1)$, in the remainder of the proof we work with $H_v$, where $v$ is the unique vertex at level 1 in~$T_p$ above $w$. To simplify notation we shall write $H$, $\mathrm{St}_{\rho_n}(t_1^n)$, etc for $H_v$, $\mathrm{St}_{\rho_n}(t_1^n)_v$, etc, as we only work with these projections from now on in the proof.

    First, we may define $H_1$ and $H_2$ as
    $$\frac{H\mathrm{St}_{\rho_n}(t_1^n)}{\mathrm{St}_{\rho_n}(t_1^n)}=H_1\le \widetilde{A}_n\quad\text{and}\quad \frac{H\cap \mathrm{St}_{\rho_n}(t_1^n)}{\mathrm{St}_{\rho_n}(t_2^n)}=H_2\le \frac{\mathrm{St}_{\rho_n}(t_1^n)}{\mathrm{St}_{\rho_n}(t_2^n)},$$
    and note that the definition of $H_2$ makes sense as $H\ge \mathrm{st}_{\rho_n}(w)\ge \mathrm{St}_{\rho_n}(t_2^n)$. Then, we define $$K:=H_2\rtimes H_1\le \frac{H\mathrm{St}_{\rho_n}(t_1^n)}{\mathrm{St}_{\rho_n}(t_2^n)},$$
    which is well defined. Indeed, on the one hand, by \cref{proposition: properties of Gn}\textcolor{teal}{(i)} both $H_1$ and~$H_2$ can be realized as well-defined subgroups of $H\mathrm{St}_{\rho_n}(t_1^n)/\mathrm{St}_{\rho_n}(t_2^n)$ with trivial intersection. On the other hand, the subgroup $H_2$ is normalized by $H_1$, as $H\cap \mathrm{St}_{\rho_n}(t_1^n)$ is normalized by $H$ and $\mathrm{St}_{\rho_n}(t_1^n)/\mathrm{St}_{\rho_n}(t_2^n)$ is abelian (so the sections of $H$ and $H_1$ at level $t_1^n$ do not play a role modulo the stabilizer of level $t_2^n$).

    Let $0\le i\le p-1$ be such the projection of $b_n^{a^i}$ at $v$ is non-trivial and let us write~$\widetilde{b}_n$ for this projection. Then
    $$\psi_{l_n-1}(\widetilde{b}_n)=(d_0(a),d_1(a),\dotsc, d_{l_{n+1}-1}(a)).$$
    A conjugate $\widetilde{b}_n^h$ with $h\in \widetilde{A}_n$ corresponds (via $\psi_{l_n-1}$) to a permutation of the tuple $$(d_0(a),d_1(a),\dotsc, d_{l_{n+1}-1}(a)).$$
    Note that $d_i(a)$ commutes with $d_j(a)$ for any $i,j\ge 0$, so $\langle \widetilde{b}_n\rangle^{\widetilde{A}_n}$ is elementary $p$-abelian. Since this action by conjugation of $\widetilde{A}_n$ permuting the components of this tuple is transitive and there are exactly $l_{n+1}=|\widetilde{A}_n|$ components in this tuple, for each $0\le i\le l_{n+1}-1$, there is a unique element $h\in \widetilde{A}_n$ such that $d_i(a)$ is at the first component of $\widetilde{b}_n^h$. Thus, for each $h\in \widetilde{A}_n$, the conjugate $\widetilde{b}_n^h$ is not contained in the subgroup $\langle \widetilde{b}_n\rangle^{\widetilde{A}_n\setminus\{h\}}$. Therefore, the elementary $p$-abelian group $\langle \widetilde{b}_n\rangle^{\widetilde{A}_n}$ is generated by exactly $|\widetilde{A}_n|$ distinct elements of order $p$, corresponding to the distinct conjugates of $\widetilde{b}_n$. This, together with \cref{align: properties of An} and \cref{proposition: properties of Gn}\textcolor{teal}{(i)}, yields
    \begin{align}
        \label{align: the quotient is An}
        \frac{\mathrm{St}_{\rho_n}(t_1^n)}{\mathrm{St}_{\rho_n}(t_2^n)}\cong\langle \widetilde{b}_n \rangle^{\widetilde{A}_n}\cong C_p^{|\widetilde{A}_n|}\cong A_{n+1}.
    \end{align}
    Now note that if $\widetilde{b}_n^h\in H_2\le \mathrm{St}_{\rho_n}(t_1^n)/\mathrm{St}_{\rho_n}(t_2^n)$ for some $h\in \widetilde{A}_n$, then $\widetilde{b}_n^{hH_1}\subseteq H_2$ as $H_2^{H_1}=H_2$. Hence, we obtain
    \begin{align*}
        H_2&\cong\prod_{s\in S}\langle \widetilde{b}_n^{s}\rangle^{H_1}\cong C_p^{\# S\cdot |H_1|}
    \end{align*}
    for some subset $S$ of left cosets of $H_1$ in $\widetilde{A}_n$. Therefore
    \begin{align*}
       \log_p|\mathrm{St}_{\rho_n}(t_1^n):H\cap \mathrm{St}_{\rho_n}(t_1^n)|&=\log_p|A_{n+1}|-\log_p|H_2|= |\widetilde{A}_{n}|-\#S\cdot |H_1|\\
       &=k\cdot |H_1|= k\cdot p^{l_n-\ell}, 
    \end{align*}
    where $k:=|\widetilde{A}_n:H_1|-\#S\in \mathbb{N}\cup \{0\}$ and the last equality follows from the assumption in the statement.
\end{proof}

\cref{lemma: indices of action of H} yields a dichotomy for $H$: either $H$ contains the $t_1^n$ level stabilizer or the  logarithmic indices in the statement of \cref{lemma: indices of action of H} cannot be both small at the same time. This is the key to prove our main lemma:

\begin{lemma}
    \label{lemma: climb levels}
    For each $n\ge 1$, let $H\le G_n$ be such that $|G_n:H|\le p^{l_n}$ and $H\ge \mathrm{st}_{\rho_n}(w)$ for some vertex $w$ at level $t_2^n$ in $T_p$. Then $H\ge \mathrm{St}_{\rho_n}(t_1^n)$. 
\end{lemma}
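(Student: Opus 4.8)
The plan is to reduce the statement to the single index inequality governed by \cref{lemma: indices of action of H}, together with the elementary fact that $p^m>m$ for every integer $m\ge 0$.

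I would begin with a structural remark locating $\mathrm{st}_{\rho_n}(w)$. By \cref{proposition: properties of Gn}\textcolor{teal}{(i)} we have $G_n=\mathrm{St}_{\rho_n}(l_n)\rtimes A_n$ with $|A_n|=p^{l_n}$, and $G_n$ is level-transitive since it is branch. As $\mathrm{St}_{\rho_n}(l_n)$ acts trivially on the $l_n$-th level of $T_p$, which has exactly $p^{l_n}=|A_n|$ vertices, the group $A_n$ acts \emph{regularly} there. Hence $\mathrm{st}_{\rho_n}(u)=\mathrm{St}_{\rho_n}(l_n)$ for every vertex $u$ at level $l_n=t_1^n$; in particular $\mathrm{st}_{\rho_n}(w)\le \mathrm{St}_{\rho_n}(l_n)\le \mathrm{St}_{\rho_n}(1)$. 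Setting $H_0:=H\cap \mathrm{St}_{\rho_n}(1)$, this gives $\mathrm{st}_{\rho_n}(w)\le H_0$, and since $\mathrm{St}_{\rho_n}(l_n)\le \mathrm{St}_{\rho_n}(1)$ the goal $H\ge \mathrm{St}_{\rho_n}(l_n)$ is equivalent to $H_0\ge \mathrm{St}_{\rho_n}(l_n)$, i.e.\ to the vanishing of the defect
$$D:=\log_p\big|\mathrm{St}_{\rho_n}(l_n):H\cap \mathrm{St}_{\rho_n}(l_n)\big|=\log_p\big|\mathrm{St}_{\rho_n}(l_n):H_0\cap \mathrm{St}_{\rho_n}(l_n)\big|.$$

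Next I would run the counting. Writing $\ell:=\log_p|G_n:H\mathrm{St}_{\rho_n}(l_n)|$, the factorization of indices through $\mathrm{St}_{\rho_n}(l_n)$ gives $\log_p|G_n:H|=\ell+D$, so the hypothesis $|G_n:H|\le p^{l_n}$ reads $\ell+D\le l_n$. Since $H_0\le \mathrm{St}_{\rho_n}(1)$ and $\mathrm{st}_{\rho_n}(w)\le H_0$, \cref{lemma: indices of action of H} applies to $H_0$ and yields $D=k\,p^{l_n-\ell'}$ for some integer $k\ge 0$, where $\ell':=\log_p|G_n:H_0\mathrm{St}_{\rho_n}(l_n)|$. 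When $H\le \mathrm{St}_{\rho_n}(1)$ we have $H_0=H$ and $\ell'=\ell$, so if $D\ne 0$ then $D\ge p^{l_n-\ell}$ and therefore $\ell+D\ge \ell+p^{l_n-\ell}=l_n+\big(p^{l_n-\ell}-(l_n-\ell)\big)>l_n$, contradicting $\ell+D\le l_n$ because $p^m>m$. Hence $D=0$ and $H\ge \mathrm{St}_{\rho_n}(l_n)$ in this case.

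It remains to treat $H\not\le \mathrm{St}_{\rho_n}(1)$, and I expect this to be the main obstacle. Here $H$ surjects onto the top $C_p$, so $|H:H_0|=p$ and $\ell'=\ell+1$; the inequality only degrades to $\ell+D\ge \ell+p^{l_n-\ell-1}$, which still forces $D=0$ \emph{except} in the borderline configurations $\ell=l_n-1$ (any $p$) and $\ell=l_n-2,\ p=2$, where $|G_n:H|=p^{l_n}$ is saturated and \cref{lemma: indices of action of H} gives no information. To dispose of these I would project into the subtree rooted at the level-$l_n$ ancestor $v$ of $w$: by \cref{proposition: properties of Gn}\textcolor{teal}{(ii)} the section map $\varphi_v$ sends $\mathrm{St}_{\rho_n}(l_n)$ onto $G_{n+1}$, and since $\mathrm{st}_{\rho_n}(w)\supseteq\ker(\varphi_v|_{\mathrm{St}_{\rho_n}(l_n)})$ the subgroup $H\cap \mathrm{St}_{\rho_n}(l_n)$ is the full $\varphi_v$-preimage of its image, whence $D=\log_p|G_{n+1}:\varphi_v(H\cap \mathrm{St}_{\rho_n}(l_n))|$ and, by the regular-action remark applied in $G_{n+1}$, $\varphi_v(H\cap \mathrm{St}_{\rho_n}(l_n))\supseteq \varphi_v(\mathrm{st}_{\rho_n}(w))=\mathrm{St}_{\rho_{n+1}}(l_{n+1})$. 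The borderline case then reduces to ruling out an index-$p$ subgroup of $\mathrm{St}_{\rho_n}(l_n)$ that contains $\mathrm{st}_{\rho_n}(w)$ and is normalized by an element $c\in H$ acting as a $p$-cycle on the first level; as such a subgroup would have to contain $\mathrm{st}_{\rho_n}(w^{c^i})=\mathrm{st}_{\rho_n}(w)^{c^i}$ for all $i$, the genuinely delicate point is to show that the $p$ stabilizers $\{\mathrm{st}_{\rho_n}(w^{c^i})\}_{i=0}^{p-1}$, sitting in distinct first-level subtrees, already generate $\mathrm{St}_{\rho_n}(l_n)$. This is where the explicit description $\psi_{l_n}(b_n)=(d_0(a),\dots,d_{l_{n+1}-1}(a),1,\dots,1,b_{n+1})$ of $G_n$ and \cref{lemma: orbits of elements determine basal} must enter; once it is in hand, $D=0$, which completes the proof and the accompanying induction on $n$.
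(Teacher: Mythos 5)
Your case $H\le \mathrm{St}_{\rho_n}(1)$ is complete and follows the paper: it is exactly the paper's application of \cref{lemma: indices of action of H}, and your inequality $\ell+p^{l_n-\ell}>l_n$, coming from the elementary bound $p^m>m$, is in fact a cleaner route to the contradiction than the paper's minimization of $\ell+k\cdot p^{l_n-\ell}$ via derivatives. Your opening regularity observation is also valid ($A_n$ has order $p^{l_n}$ and acts transitively on the $p^{l_n}$ vertices of level $l_n=t_1^n$, hence regularly, so $\mathrm{st}_{\rho_n}(u)=\mathrm{St}_{\rho_n}(t_1^n)$ for every $u$ at that level); this is precisely what the paper invokes when it asserts that every non-trivial element of $A_{n+1}$ moves every vertex at level $t_1^{n+1}$.

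The genuine gap is in the case $H\not\le \mathrm{St}_{\rho_n}(1)$: your argument stops exactly where the real work begins. You reduce the remaining configurations to the claim that the conjugates $\mathrm{st}_{\rho_n}(w)^{c^i}=\mathrm{st}_{\rho_n}(w^{c^i})$, $0\le i\le p-1$, generate $\mathrm{St}_{\rho_n}(t_1^n)$, and you explicitly defer this as the ``delicate point'' where the description of $b_n$ ``must enter''. That claim is the heart of the paper's proof and is not automatic. The paper proves it by computing, modulo $\mathrm{St}_{\rho_n}(t_2^n)$, that $\psi(\mathrm{St}_{\rho_n}(t_1^n))=\prod_{i=1}^{p}\varphi_i(\mathrm{St}_{\rho_n}(t_1^n))$ with each factor a copy of $A_{n+1}$ (using $\mathrm{St}_{\rho_n}(t_1^n)=\langle b_n\rangle^{A_n}$ and $\psi(A_n)=D_p(\widetilde{A}_n)\rtimes\langle a\rangle$ from \cref{align: properties of An}), and then that $\mathrm{st}_{\rho_n}(w)=\ker\varphi_i|_{\mathrm{St}_{\rho_n}(t_1^n)}=\prod_{j\ne i}\varphi_j(\mathrm{St}_{\rho_n}(t_1^n))$ modulo $\mathrm{St}_{\rho_n}(t_2^n)$; see \cref{align: description of st}. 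Granting this, two conjugates with distinct deleted coordinates already generate everything modulo $\mathrm{St}_{\rho_n}(t_2^n)$, and since $\mathrm{st}_{\rho_n}(w)\supseteq \mathrm{St}_{\rho_n}(t_2^n)$ this yields $\mathrm{St}_{\rho_n}(t_1^n)$ itself; without this computation your borderline cases remain open. Two further remarks. First, once the generation claim is in hand it settles the whole case $H\not\le\mathrm{St}_{\rho_n}(1)$ at a stroke: $H\cap\mathrm{St}_{\rho_n}(t_1^n)$ is invariant under $c$ and contains $\mathrm{st}_{\rho_n}(w)$, hence contains all the conjugates, forcing $D=0$ regardless of indices — so your detour through $H_0$, the shift $\ell'=\ell+1$ and the two borderline configurations is superfluous (the paper instead concludes via \cref{lemma: orbits of elements determine basal}, observing that $H\cap\mathrm{St}_{\rho_n}(t_1^n)$ then acts transitively on the descendants of $v$ at level $t_2^n$ and so equals $\mathrm{st}_{\rho_n}(v)$). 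Second, in your borderline case $p=2$, $\ell=l_n-2$, the stuck subgroup has index $p^2$ in $\mathrm{St}_{\rho_n}(t_1^n)$, since there $D=k\cdot 2^{m-1}=2$, not index $p$ as you state; this slip is harmless for your intended conjugation argument but wrong as written.
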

\begin{proof}
    We shall work modulo $\mathrm{St}_{\rho_n}(t_2^n)$ in the following without stating it explicitly to simplify notation. First, note that from \cref{align: properties of An} and \cref{proposition: properties of Gn} we get
    \begin{align*}
        \psi(\mathrm{St}_{\rho_n}(t_1^n))&=\psi(\langle b_n\rangle^{A_n})= \langle (\widetilde{b}_n,1,\dotsc,1)\rangle^{D_p(\widetilde{A}_n)\rtimes \langle a\rangle}\\
        &=\prod_{i=0}^{p-1}\big(\langle \widetilde{b}_n\rangle^{\widetilde{A}_n} \big)^{a^i} = \prod_{i=1}^p \varphi_i(\mathrm{St}_{\rho_n}(t_1^n)),
    \end{align*}
    where $\varphi_i$ denotes the projection at vertex $i$ after naming $1,\dotsc, p$ the vertices at level 1. Recall that by \cref{align: the quotient is An}, we also have 
    $$\varphi_i(\mathrm{St}_{\rho_n}(t_1^n))=\langle \widetilde{b}_n\rangle^{\widetilde{A}_n}\cong A_{n+1}.$$

    Let $i\in T_p$ be the unique vertex at level 1 in $T_p$ above $w$. Now, clearly we have $\mathrm{st}_{\rho_n}(w)\ge\ker \varphi_i|_{\mathrm{St}_{\rho_n}(t_1^n)}$, and we shall prove the reverse containment. An element $g\in \varphi_i(\mathrm{St}_{\rho_n}(t_1^n))\cong A_{n+1}$ fixes $w$ if and only if $g=1$, as every non-trivial element of~$A_{n+1}$ moves every vertex at level $t_1^{n+1}$, so, in particular, it moves $w$. Thus $\mathrm{st}_{\rho_n}(w)\le \ker \varphi_i|_{\mathrm{St}_{\rho_n}(t_1^n)}$, as $\mathrm{st}_{\rho_n}(w)\le \mathrm{St}_{\rho_n}(t_1^n)$. Then $\mathrm{st}_{\rho_n}(w)=\ker \varphi_i|_{\mathrm{St}_{\rho_n}(t_1^n)}$, which yields
    \begin{align}
        \label{align: description of st}
        \mathrm{st}_{\rho_n}(w)=\ker \varphi_i|_{\mathrm{St}_{\rho_n}(t_1^n)}=\prod_{j\ne i}\varphi_j(\mathrm{St}_{\rho_n}(t_1^n))\cong \prod_{i=1}^{p-1}A_{n+1},
    \end{align}
    where $j$ ranges over all the vertices at level 1 in $T_p$ but $i$.

    We claim that if $a\in H$, then $H\ge \mathrm{St}_{\rho_n}(t_1^n)$. Indeed, if $a\in H$, then $H$ acts transitively on the first level of $T_p$. As $H\ge \mathrm{st}_{\rho_n}(w)$, by the description of $\mathrm{st}_{\rho_n}(w)$ in \cref{align: description of st}, we obtain further that, for $v$ the unique vertex at level $t_1^n$ in $T_p$ above $w$, the subgroup $H\cap \mathrm{St}_{\rho_n}(t_1^n)$ acts as $A_{n+1}$ (in particular, transitively) on the descendants of $v$ at level $t_2^n$ in $T_p$, since $a\in H$. Thus, by \cref{lemma: orbits of elements determine basal} we get
    $$H\ge \mathrm{st}_{\rho_n}(v)=\mathrm{St}_{\rho_n}(t_1^n).$$
    Therefore, let us assume in the following that $a\notin H$, which implies $H\le \mathrm{St}_{\rho_n}(1)$. Then, we may apply \cref{lemma: indices of action of H} to obtain that, if 
   $$\log_p |G_n:H\mathrm{St}_{\rho_n}(t_1^n)|=\ell,$$
   then 
   $$\log_p|\mathrm{St}_{\rho_n}(t_1^n):H\cap \mathrm{St}_{\rho_n}(t_1^n)|=k\cdot p^{l_n-\ell}$$
   for some integer $k\ge 0$. Hence, we may rewrite the equation
   \begin{align*}
    \log_p|G_n:H|&=\log_p|G_n:H\mathrm{St}_{\rho_n}(t_1^n)|+ \log_p|H\mathrm{St}_{\rho_n}(t_1^n):H|\\
    &=\log_p |G_n:H\mathrm{St}_{\rho_n}(t_1^n)|+\log_p|\mathrm{St}_{\rho_n}(t_1^n):H\cap \mathrm{St}_{\rho_n}(t_1^n)|,
   \end{align*}
   as
   \begin{align}
       \label{align: equation on orders}
       \log_p|G_n:H|=\ell + k\cdot p^{l_n-\ell}.
   \end{align}
    Note that $k=0$ implies $H\ge \mathrm{St}_{\rho_n}(t_1^n)$ yielding the result. Similarly, if $\ell=l_n$, we get $H=\mathrm{St}_{\rho_n}(t_1^n)$. Thus, as we are dealing with $p$-groups, $\ell$ is also an integer, so it only remains to show that \cref{align: equation on orders} does not have a solution for integers $k,\ell$ such that $1\le k$ and $0\le \ell\le  l_n-1$. As $p\ge 2$, we have
   $$\frac{d}{dk}(\ell+k\cdot p^{l_n-\ell})=p^{l_n-\ell}>0\quad\text{and}\quad \frac{d}{d\ell}(\ell +p^{l_n-\ell})=1-p^{l_n-\ell}\log_e p\le 1- 2\log_e 2<0$$
   for any integers $1\le k$ and $0\le \ell\le l_n-1$. Therefore, the function  $\ell+k\cdot p^{l_n-\ell}$ achieves its minimum at $k=1$ and $\ell=l_n-1$, which is given by
   $l_n-1+p$. As $p\ge 2$, we get the strict inequality
   $$\log_p|G_n:H|\le l_n<l_n-1+p,$$
   which shows that \cref{align: equation on orders} has no solution for integers $0\le \ell\le l_n-1$ and $k\ge 1$.
\end{proof}

Now, if $u\in T_n$, we denote by $|u|$ the level in $T_n$ at which $u$ lies. Note that we may identify $(T_n)_u$ with $T_{n+|u|}$ by the choice of the sequences $\{t_k^n\}_{k\ge 0}$. 

Finally, we can use \cref{lemma: climb levels} to prove $T_n$-rigidity of the group $G_n$:

\begin{theorem}
\label{theorem: my groups are T-rigid}
    For each $n\ge 1$, the group $G_n$ is $T_n$-rigid.
\end{theorem}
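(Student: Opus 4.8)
The plan is to invoke \cref{proposition: characterization of rigidity} with the given action $\tau_n$ as witness: since $\mathcal{F}_{T_n}\supseteq\mathcal{S}_{\tau_n}$ holds automatically, it suffices to prove the reverse inclusion, i.e. that every $H\in\mathcal{F}_{T_n}$ is a $\tau_n$-vertex stabilizer. Two structural features of $G_n$ will drive the argument. First, by \cref{proposition: properties of Gn}\textcolor{teal}{(i)} and \cref{align: properties of An} the finite quotient $A_n\cong C_p^{l_n}$ acts \emph{regularly} on the $p^{l_n}$ vertices at level $l_n=t_1^n$; hence all level-$1$ vertex stabilizers of $\tau_n$ coincide with the normal subgroup $\mathrm{St}_{\tau_n}(1)=\mathrm{St}_{\rho_n}(l_n)$, and $G_n/\mathrm{St}_{\tau_n}(1)$ is elementary abelian. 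Second, the identification $(T_n)_u\cong T_{n+|u|}$ together with \cref{proposition: properties of Gn}\textcolor{teal}{(ii)} yields a self-similar recursion: the projection $\varphi_v$ carries $\mathrm{st}_{\tau_n}(v)$ onto a copy of $G_{n+|v|}$ acting as $\tau_{n+|v|}$ on the subtree below $v$.

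With this in place, I would argue by induction on the level $k$ that every $H\in\mathcal{F}_{T_n}$ of index $p^{t_k^n}$ is a level-$k$ vertex stabilizer, uniformly in $n$. The base case $k=0$ is the root. For the inductive step, write $H=H_k$ inside a $T_n$-filtration $\{H_j\}_{j\ge 0}$; applying the induction hypothesis to the larger term $H_{k-1}$ identifies it as $\mathrm{st}_{\tau_n}(v')$ for a vertex $v'$ at level $k-1$, so that $H\le\mathrm{st}_{\tau_n}(v')$ with $|\mathrm{st}_{\tau_n}(v'):H|=p^{l_{n+k-1}}$. Setting $m:=n+k-1$, I would then pass to the subtree below $v'$ via $\varphi_{v'}$ and reduce the whole problem to the case $k=1$ for the group $G_m$ acting by $\tau_m$ on $T_m$.

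The reduction requires $\varphi_{v'}$ to be index-preserving on $H$, i.e. $\ker\big(\varphi_{v'}|_{\mathrm{st}_{\tau_n}(v')}\big)\le H$. Here I would use \cref{lemma: every basal contains rist}: writing $H=\mathrm{N}_{G_n}(B)$, every vertex $u$ moved by $B$ satisfies $\mathrm{st}_{\tau_n}(u)\le H\le\mathrm{st}_{\tau_n}(v')$, and $\mathrm{st}_{\tau_n}(u)\subseteq\mathrm{st}_{\tau_n}(v')$ forces $v'$ to be an ancestor of $u$ (otherwise level-transitivity and weak branchness, via \cref{lemma: level-transitivity}, would produce an element fixing $u$ and moving $v'$). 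Thus $B$ is supported strictly below $v'$, the kernel fixes such a $u$ and lies in $H$, and $\bar H:=\varphi_{v'}(H)$ has index exactly $p^{l_m}=p^{t_1^m}$ in $G_m$. Once inside $G_m$, the final identification is made by \cref{lemma: climb levels} and \cref{lemma: orbits of elements determine basal}: if $\bar H$ contains a level-$2$ vertex stabilizer of $T_m$, then \cref{lemma: climb levels} gives $\bar H\ge\mathrm{St}_{\tau_m}(1)$, whence equality by the index count, so $\bar H=\mathrm{St}_{\tau_m}(1)$ is the common level-$1$ stabilizer; pulling back through $\varphi_{v'}$ (and matching orbits via \cref{lemma: orbits of elements determine basal}) gives $H=\mathrm{st}_{\tau_n}(v)$ for the child $v$ of $v'$ singled out by $\bar H$.

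The main obstacle, and the most delicate point, is guaranteeing that the hypothesis of \cref{lemma: climb levels} is met on the nose, namely that $\bar H$ contains a vertex stabilizer at \emph{exactly} level $t_2^m$ of $T_m$ rather than merely some deeper stabilizer. (If the contained stabilizer already sits at level $1$, the index count finishes at once; the trouble is the deep case.) Controlling this level is precisely the purpose of \cref{lemma: indices of action of H}, which pins down $\log_p|\mathrm{St}_{\rho_m}(t_1^m):\bar H\cap\mathrm{St}_{\rho_m}(t_1^m)|$ in the rigid form $k\cdot p^{l_m-\ell}$, and the sharp inequality $l_m-1+p>l_m$ underlying \cref{lemma: climb levels} then eliminates every intermediate possibility, forcing either $\bar H\le\mathrm{St}_{\tau_m}(1)$ or $\bar H\ge\mathrm{St}_{\tau_m}(1)$. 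I expect the bookkeeping that feeds a level-$t_2^m$ stabilizer into this machine — extracting it from the next filtration term $H_{k+1}$ and verifying that the corresponding basal subgroup bottoms out at the expected level, then that it descends correctly under $\varphi_{v'}$ — to be the part demanding the most care, with the regularity of the $A_m$-action and \cref{lemma: indices of action of H} being exactly what makes it go through.
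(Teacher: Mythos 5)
Your outer frame (reduce via \cref{proposition: characterization of rigidity} to showing $\mathcal{F}_{T_n}=\mathcal{S}_{\tau_n}$, with \cref{lemma: indices of action of H} and \cref{lemma: climb levels} as the engine) matches the paper, but there is a genuine gap in your central reduction step: the claim that $\ker\big(\varphi_{v'}|_{\mathrm{st}_{\tau_n}(v')}\big)\le H$, which you justify by arguing that every vertex $u$ moved by $B$ must lie below $v'$, since otherwise ``level-transitivity and weak branchness would produce an element fixing $u$ and moving $v'$''. For these groups no such element can exist, for the very reason you yourself record at the outset: $A_n$ acts \emph{regularly} on level $1$ of $T_n$, so every element of $G_n$ fixing a single level-$1$ vertex stabilizes the entire level, and all level-$1$ vertex stabilizers of $\tau_n$ coincide with the normal subgroup $\mathrm{St}_{\tau_n}(1)$. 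Hence the containment $\mathrm{st}_{\tau_n}(u)\le\mathrm{st}_{\tau_n}(v')$ carries no information about comparability of $u$ and $v'$ --- this is precisely the pitfall the paper flags in Garrido's Proposition 7.1.4 (stabilizers of incomparable vertices can be contained in one another, indeed equal) --- and the support of $B$ need not be confined below $v'$. The kernel claim itself is false in the generality you need: $\ker\varphi_{v'}$ contains elements acting nontrivially below siblings of $v'$ (suitable conjugates of $b_n$), and these do not lie in, say, $H=\mathrm{st}_{\tau_n}(v)$ for $v$ below a sibling of $v'$, even though such $H$ satisfies $H\le\mathrm{St}_{\tau_n}(1)=\mathrm{st}_{\tau_n}(v')$. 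So $\varphi_{v'}$ is not index-preserving on the subgroups your induction must handle, the inductive identification of $H_{k-1}$ as ``the'' stabilizer $\mathrm{st}_{\tau_n}(v')$ is ambiguous already at $k=2$, and the reduction to $G_m$ collapses; note also that $\mathrm{St}_{\tau_m}(1)$, being the common stabilizer of all level-$1$ vertices of $T_m$, does not ``single out a child'' as your final step requires.

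The paper's proof is built precisely to avoid any kernel-containment claim. It fixes $H\ge\mathrm{st}_{\tau_n}(w_k)$ of index exactly $p^{l_n}$ and passes to the projection $H_u=\varphi_u\big(\tau_n(H\cap\mathrm{st}_{\tau_n}(u))\big)$ at the vertex $u$ \emph{two levels above} $w_k$; this choice automatically places the contained stabilizer at exactly level $t_2^m$ of $T_p$ (with $m=n+|u|$), resolving by construction the bookkeeping issue you correctly identify as delicate, with no appeal to a further filtration term $H_{k+1}$. Since $|G_m:H_u|\le p^{l_n}\le p^{l_m}$, \cref{lemma: climb levels} applies and gives $H_u\ge\mathrm{St}_{\tau_m}(1)$, and this is pulled back not through a kernel but through orbits: $H\cap\mathrm{st}_{\tau_n}(uw)$ and $\mathrm{st}_{\tau_n}(uw)$ both contain $\mathrm{st}_{\tau_n}(w_k)$ and have the same orbit of $w_k$ (level-transitivity below $uw$, via \cref{lemma: level-transitivity}), hence are equal by \cref{lemma: orbits of elements determine basal}, yielding $H\ge\mathrm{st}_{\tau_n}(w_{k-1})$. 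Iterating climbs one level at a time up to $H\ge\mathrm{st}_{\tau_n}(w_1)$, where the index count forces equality; the general case of $\mathcal{F}_{T_n}=\mathcal{S}_{\tau_n}$ then follows by induction on the index, using $\varphi_v(\mathrm{st}_{\tau_n}(v))=G_{n+1}$ as in \cref{theorem: sufficient condition for rigidity in p-adic}. In short: the paper climbs upward from the deep stabilizer using orbit rigidity, whereas your downward induction along filtration terms rests on an index-preserving projection that fails exactly because of the regular, elementary abelian layers that make the groups $G_n$ non-rigid on $T_p$ in the first place.
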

\begin{proof}
Let us fix $n\ge 1$.  We only need to show that if $\mathrm{st}_{\tau_n}(w_k)\le H\le G_n$ for some $w_k\in T_n$ and $|G_n:H|=p^{l_n}$, then $H=\mathrm{St}_{\tau_n}(1)=\mathrm{st}_{\tau_n}(v)$, where the last equality always holds for any $v$ at level 1 in $T_n$. Then, as by \cref{proposition: properties of Gn}\textcolor{teal}{(ii)} we have  
    $$\varphi_{v}(\mathrm{st}_{\tau_n}(v))=G_{n+1},$$
    we may argue as in the proof of \cref{theorem: sufficient condition for rigidity in p-adic} to obtain $\mathcal{F}_{T_n}=\mathcal{S}_{\tau_n}$. Hence, the result follows from \cref{proposition: characterization of rigidity}. 
    
    Let us consider $H_u=\varphi_u(\tau_n(H\cap\mathrm{st}_{\tau_n}(u)))$ for the unique vertex $u$ two levels above $w_k$ in~$T_n$. If $H_u\ge \mathrm{st}_{\tau_{n+|u|}}(w)$ for some $w\in (T_n)_{u}=T_{n+|u|}$ such that $uw$ is above $w_k$ in $T_n$, then $H\ge \mathrm{st}_{\tau_n}(uw)$. In fact, this follows from \cref{lemma: level-transitivity}, as $H$ is completely determined by the $H$-orbit of $w_k$ by \cref{lemma: orbits of elements determine basal}.

    Now, again by \cref{proposition: properties of Gn}\textcolor{teal}{(ii)}, we have
    $$p^{l_n}=|G_n:H|\ge |(G_{n})_u:H_u|=|G_{n+|u|}:H_u|.$$
    As 
    $$p^{l_{n+|u|}}\ge p^{l_n}\ge |G_{n+|u|}:H_u|,$$
    the subgroup $H_u$ satisfies the assumptions in \cref{lemma: climb levels}.  Thus, we may apply \cref{lemma: climb levels} to $H_u$ to obtain
    $$H_u\ge \mathrm{St}_{\tau_{n+|u|}}(1)=\mathrm{st}_{\tau_{n+|u|}}(w)$$
    for any $w$ at level 1 in $T_{n+|u|}$. Therefore, by the above observation
    $$H\ge \mathrm{st}_{\tau_n}(w_{k-1}),$$
    where $w_{k-1}:=uw$ is the unique vertex one level above $w_k$ in $T_n$. This argument may be iterated (as $l_{n+k}\ge l_n$ for every $k\ge 0$) yielding
    $$H\ge \mathrm{st}_{\tau_n}(w_1),$$
    where $w_1$ is at level 1 in $T_n$. However, as
    $$|G_n:H|=p^{l_n}=|G_n:\mathrm{st}_{\tau_n}(w_1)|,$$
    we get the equality of subgroups $H= \mathrm{st}_{\tau_n}(w_1)$ as wanted.
\end{proof}

\begin{figure}[H]

        \centering
 \includegraphics{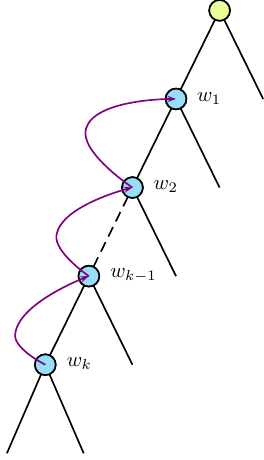}
        \caption{An illustration of the proof of \cref{theorem: my groups are T-rigid}. At each step, we apply \cref{lemma: climb levels} to climb one level up in $T_n$ and obtain a larger vertex stabilizer inside the subgroup $H$.}
        \label{figure: not m cousins}
\end{figure}

\subsection{Disproval of Boston's conjecture}

As remarked in \cite{RestrictedSpectra}, to prove \cref{Theorem: Boston conjecture}, it is enough to show that every branch action of $G_n$ on $T_p$ (as a subgroup of $W_p$) is zero-dimensional. We conclude the paper by using the $T_n$-rigidity of the group~$G_n$ to prove this:

\begin{proof}[Proof of \cref{Theorem: Boston conjecture}]
    For each $n\ge 1$, the group $G_n\le W_p$ is $T_n$-rigid, by \cref{theorem: my groups are T-rigid}. Let us consider any branch action on the $p$-adic tree $\chi_n:G_n\to \mathrm{Aut}~T_p$, such that $\chi_n(G_n)\le W_p$. We obtain an induced branch action $\widetilde{\chi}_n:G_n\to \mathrm{Aut}~T_n$ by deletion of levels. However, as $G_n$ is $T_n$-rigid, the branch actions $\tau_n$ and $\widetilde{\chi}_n$ must be conjugate in $\mathrm{Aut}~T_n$. In particular
    \begin{align}
    \label{align: equality of log ind}
        \log_p|G_n:\mathrm{St}_{\rho_n}(t_k^n)|&=\log_p|G_n:\mathrm{St}_{\tau_n}(k)|=\log_p|\widetilde{\chi}_n(G_n):\mathrm{St}_{\widetilde{\chi}_n}(k)|\\
        &=\log_p|\chi_n(G_n):\mathrm{St}_{\chi_n}(t_k^n)|\nonumber
    \end{align}
    for every $k\ge 1$. In \cite[Proposition 6.12]{RestrictedSpectra}, the author showed that
    $$\mathrm{hdim}_{W_p}(\overline{G}_n)=\liminf_{k\to \infty}\frac{\log_p|G_n:\mathrm{St}_{\rho_n}(t_k^n)|}{\log_p|W_p:\mathrm{St}_{W_p}(t_k^n)|}=0.$$
    Thus, combining this with \cref{align: equality of log ind} yields
    \begin{align*}
        \mathrm{hdim}_{W_p}(\overline{\chi_n(G_n)})&=\liminf_{k\to \infty}\frac{\log_p|\chi_n(G_n):\mathrm{St}_{\chi_n}(k)|}{\log_p|W_p:\mathrm{St}_{W_p}(k)|}\\
        &\le \liminf_{k\to \infty}\frac{\log_p|\chi_n(G_n):\mathrm{St}_{\chi_n}(t_k^n)|}{\log_p|W_p:\mathrm{St}_{W_p}(t_k^n)|}\\
    &=\liminf_{k\to \infty}\frac{\log_p|G_n:\mathrm{St}_{\rho_n}(t_k^n)|}{\log_p|W_p:\mathrm{St}_{W_p}(t_k^n)|}=0.
    \end{align*}
    As $\chi_n$ was an arbitrary branch action of $G_n$ on $T_p$ (such that $\chi_n(G_n)\le W_p$), each~$G_n$ is a counterexample to Boston's conjecture.
\end{proof}

We note that the proof above yields that the Hausdorff dimension of $\overline{\chi_n(G_n)}$ in $\mathrm{Aut}~T_p$ is also zero even when $\chi_n(G_n)$ is not a subgroup of $W_p$.



\bibliographystyle{unsrt}

\end{document}